\documentclass{amsart}
\usepackage{amssymb}
\usepackage{amsmath}
\usepackage{amsthm}
\usepackage{mathrsfs}
\usepackage{verbatim}
\usepackage{xcolor}
\newtheorem{theorem}{Theorem}[section]

\newtheorem{lemma}[theorem]{Lemma}

\newtheorem*{corollary*}{Corollary}
\theoremstyle{definition}
\newtheorem{definition}[theorem]{Definition}

\theoremstyle{remark}
\newtheorem{remark}[theorem]{Remark}
\numberwithin{equation}{section}
\title[Complex Monge-Amp\`ere flows on compact K\"ahler manifolds]{Weak Solutions to the complex Monge-Amp\`ere flows on compact K\"ahler manifolds : general measures on the right-hand side}
\author{Bowoo Kang}
\address{Department of Mathematical Sciences, KAIST, 291 Daehak-ro, Yuseong-gu, Daejeon
34141, South Korea}
\email{bou704@kaist.ac.kr}
\begin{document}
\begin{abstract}
    We show the existence of a bounded solution to the Cauchy problem for the complex Monge-Amp\`ere flow on a  compact K\"ahler manifold, with the right-hand side of the form $dt \wedge d\mu$ where $d\mu$ is dominated by a Monge-Amp\`ere measure of a H\"older continuous quasi-plurisubharmonic function. We also prove that for a given semi-positive big from $\theta$, the $t$-slice of the solution is locally H\"older continuous on $\rm{Amp(\theta)}$ for all $t \in (0, T)$. Next, we prove a comparison principle when $d\mu$ is dominated by a Monge-Amp\`ere measure of a bounded quasi-plurisubharmonic function, which implies the uniqueness of the solution.
\end{abstract}
\maketitle
\section{Introduction}

\indent In this paper, we aim at extending the results by Guedj, Lu and Zeriahi in \cite{GLZ20} for more general measures on the right-hand side. Let $(X, \omega_X)$ be a compact K\"ahler $n$-dimensional manifold which is normalized so that $\int_X \omega_X^n = 1$. We fix $0 < T < +\infty$ and define $X_T := (0, T) \times X$ a $(2n+1)$-dimensional manifold with the parabolic boundary $\partial_0X_T := \{0\} \times X$. Let $\{\omega_t\}_{t \in [0, T)}$ be a $C^2$-family of closed semi-positive forms such that 
\begin{align*}
    -A\omega_t \leq \Dot{\omega}_t \leq \frac{A}{t}\omega_t \quad \text{ and } \quad \Ddot{\omega}_t \leq A\omega_t
\end{align*}
for some fixed constant $A > 0.$ We assume that there exists a smooth closed semipositive $(1, 1)$-form $\theta$ whose cohomology class is big and $\theta \leq \omega_t$ for all $t \in [0, T)$. We further assume that there exists a constant $B > 0$ such that $\omega_t \leq B\theta$ holds for all $t \in [0, T)$. We consider the following family of Monge-Amp\`ere flows
\begin{align}\label{CMAF}
    dt \wedge (\omega_t+dd^cu)^n = e^{\partial_tu+F(t, x, u)}dt \wedge d\mu \text{ in } X_T,
\end{align}
where $u \in \mathcal{P}(X_T, \omega_t) \cap L^{\infty}(X_T)$ is the unknown function, $\mu$ is a given positive Borel measure on $X$ satisfying
\begin{itemize}
    \item $PSH(X, \theta) \subset L^1(X, d\mu)$,
    \item $\mu(X) = \int_X \theta^n$,
\end{itemize}
and $F(t, x, r) : [0, T) \times X \times \mathbb{R} \rightarrow \mathbb{R}$ is
\begin{itemize}
    \item continuous in $[0, T) \times X \times \mathbb{R}$,
    \item uniformly quasi-increasing in $r$, i.e. $\forall (t, x) \in [0, T) \times X$, $\exists \lambda_F \geq 0$ such that 
    \begin{align*}
        r \mapsto F(t, x, r) + \lambda_Fr \text{ is increasing in }\mathbb{R},
    \end{align*}
    \item locally uniformly Lipschitz in $(t, r)$,
    \item locally uniformly semi-convex in $(t, r)$.
\end{itemize}
As usual, $d = \partial + \overline{\partial}$ and $d^c = i(\overline{\partial} - \partial)$ so that $dd^c = i\partial \overline{\partial}$. This setup is same with \cite{GLZ20}, except for the existence of constant $B > 0$ satisfying $\omega_t \leq B\theta$. We remark that even though it is not mentioned in \cite{GLZ20}, it follows from the proof of \cite[Lemma 5.1]{GLZ20} that there exists such a constant $B$ for $\omega_t$ and $\theta$ in \cite[Section 5]{GLZ20}. \\
\indent The interest on the complex Monge-Amp\`ere flow was first motivated by its relationship with the K\"ahler-Ricci flow. In 1985, Cao \cite{Cao85} reproved the Yau's solution to the Calabi conjecture by solving the complex Monge-Amp\`ere flow on a compact K\"ahler manifold and proving its long-time existence. Since then, the K\"ahler-Ricci flow has been an important topic in itself. We refer the readers to survey papers \cite{SW13, To18} and references therein for more details. Moreover, various geometric flows have been extensively studied, including Monge-Amp\`ere flows, J-flows, inverse Monge-Amp\`ere flows and so on. Recently, Phong and T\^o \cite{PT21} introduced the notion of parabolic C-subsolution, originated from the work by Guan \cite{Gu14} and Sz\'ekelyhidi \cite{Sz18}, and provided the unified approach to the study of many geometric flows. \\
\indent For its geometrical application, the study on weak solutions of the degenerate complex Monge-Amp\`ere flows has been one of the main topic. Eyssidieux, Guedj, and Zeriahi \cite{EGZ15, EGZ16} developed a viscosity approach to study the degenerate complex Monge-Amp\`ere flows. They used this viscosity approach to study the K\"ahler-Ricci flow on minimal models of positive Kodaira dimension in \cite{EGZ18}. Subsequently in 2020, Guedj, Lu, and Zeriahi \cite{GLZ20, GLZ21-2} developed a pluripotential approach and applied it to study the K\"ahler-Ricci flow on varieties with log terminal singularities. They also proved in \cite{GLZ21-1} that viscosity and pluripotential solutions are equivalent under some conditions. \\
\indent Let us give background on the pluripotential approach of Guedj, Lu and Zeriahi, where they successfully extended the works of Bedford and Taylor in \cite{BT76, BT82} into the parabolic setting and proved the existence and uniqueness of a solution to the complex Monge-Amp\`ere flow in $(0, T) \times X$, when $\mu = g\omega_X^n$ where $g \in L^p(X)$ for some $p > 1$ and $g > 0$ a.e. in $X$. They proved in \cite[Theorem A, B]{GLZ20} that there exists a unique function $u \in \mathcal{P}(X_T, \omega_t) \cap L^{\infty}(X_T)$ such that
\begin{itemize}
    \item $u$ satisfies (\ref{CMAF}),
    \item $u$ is continuous in $(0, T) \times$ $\rm{Amp(\theta)}$,
    \item $t \mapsto u(t, x)$ is locally uniformly semi-concave in $(0, T)$,
    \item $\lim_{t \rightarrow 0+}u(t, x) = u_0(x)$ for all $x \in X$,
\end{itemize}
where $u_0 \in PSH(X, \omega_0) \cap L^{\infty}(X)$ and $\rm{Amp(\theta)}$ is an ample locus of $\theta$. We extend this result to the case when the right-hand side is a general measure that can be singular with respect to the volume form $\omega_X^n$. \\
\indent We fix some further notations and expressions which is frequently used throughout this paper. We mainly deal with a family of quasi-plurisubharmonic functions.
\begin{definition}[{\cite[Definition 1.1]{GLZ20}}]
    A parabolic potential is $u : X_T \rightarrow [-\infty, +\infty)$ such that
    \begin{itemize}
        \item $x \mapsto u(t, x)$ is $\omega_t$-plurisubharmonic on $X$ for all $t \in (0, T)$,
        \item $u$ is locally uniformly Lipschitz in $(0, T)$.
    \end{itemize}
    We denote by $\mathcal{P}(X_T, \omega_t)$ the set of all parabolic potentials.
\end{definition}
For a given sequence of Borel measures $\{\nu_j\}_{j = 1}^{\infty}$, we say for convenience that $\nu_j \rightarrow \nu$ weakly as $j \rightarrow \infty$ for some Borel measure $\nu$, when the convergence is in the weak sense of Radon measures. Let $dV$ denote the normalized smooth volume form $\omega_X^n$ and let $d\ell$ denote the Lebesgue measure on $\mathbb{R}$. \\
\indent 
Now, let us state our first main result. We prove the existence of a solution to the Cauchy problem when the measure $\mu$ on the right-hand side is dominated by a Monge-Amp\`ere measure of a given H\"older continuous quasi-plurisubharmonic function. 
\begin{theorem}\label{intro_main result 1}
    Let $u_0 \in PSH(X, \omega_0) \cap L^{\infty}(X)$. Assume that there exist a H\"older continuous $\omega_X$-psh function $\phi$ and a constant $C > 0$ satisfying
    \begin{align*}
        d\mu \leq C(\omega_X+dd^c\phi)^n \text{ in $X$}.
    \end{align*}
    Then, there exists $u \in \mathcal{P}(X_T, \omega_t) \cap L^{\infty}(X_T)$ satisfying
    \begin{align*}
        \begin{cases}
            &dt \wedge (\omega_t+dd^cu)^n = e^{\partial_tu+F(t, x, u)}dt \wedge d\mu \text{ in $X_T$}, \\
            &\lim_{t \rightarrow 0+}u(t, \cdot) = u_0 \text{ in } L^1(X, d\mu), \\
            &u \text{ is locally H\"older continuous on } {\rm{Amp}}(\theta) \text{ for all } t \in (0, T),
        \end{cases}
    \end{align*}
    where the H\"older exponent of $u(t, \cdot)$ does not depend on $t$. Moreover, $u$ is jointly continuous on $(0, T) \times {\rm{Amp}}(\theta)$.
\end{theorem}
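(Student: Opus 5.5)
We prove Theorem \ref{intro_main result 1} by approximating $\mu$ by measures with $L^p$ densities, solving with \cite[Theorem A, B]{GLZ20}, and passing to the limit using a priori estimates that depend on $\mu$ only through the domination $d\mu\le C(\omega_X+dd^c\phi)^n$.

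\textbf{Approximation.} Regularize $\phi$ by Demailly's method into smooth $\omega_X$-psh functions $\phi_j$, decreasing to $\phi$ and converging uniformly, with a uniform H\"older bound on the rate $\|\phi_j-\phi\|_\infty\lesssim 2^{-j\alpha}$. Write $\mu=f\,(\omega_X+dd^c\phi)^n$ with $0\le f\le C$. Set
\[
\mu_j=c_j\bigl(f_j(\omega_X+dd^c\phi_j)^n+\varepsilon_j dV\bigr),
\]
where $f_j$ are smooth, $0\le f_j\le C$, and $f_j\to f$ suitably, for instance by taking $f_j$ so that $\mu_j\to\mu$ weakly. The constants $c_j\to1$ normalize $\mu_j(X)=\int_X\theta^n$, and $\varepsilon_j\downarrow0$ ensures the density is positive. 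Each $\mu_j$ then has a smooth positive density, so \cite{GLZ20} gives solutions $u_j$ with initial data $u_0$ (possibly after also smoothing $u_0$ by a decreasing sequence).

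\textbf{Uniform estimates.} The ingredients are as follows.
\begin{enumerate}
\item A uniform $L^\infty$ bound on $u_j$. Build sub- and supersolutions of the form $u_0\pm Ct$ modified by $\theta$-psh envelopes. The measures $\mu_j$ are uniformly dominated by $C'(\omega_X+dd^c\phi_j)^n+dV$, and these satisfy a uniform Ko{\l}odziej-type capacity domination $\mu_j(K)\le A\,\mathrm{Cap}(K)^{1+\delta}$, since Monge-Amp\`ere measures of H\"older potentials have this property uniformly in the H\"older norm. The elliptic $L^\infty$ estimates for big classes (Eyssidieux--Guedj--Zeriahi) then apply.
\item Uniform Lipschitz and semi-concavity bounds in $t$ on compact subsets of $(0,T)$. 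Follow \cite{GLZ20}: compare $u_j(t+s)$ with rescalings of $u_j(t)$ using $-A\omega_t\le\dot\omega_t\le\frac At\omega_t$, $\ddot\omega_t\le A\omega_t$, the quasi-monotonicity and semi-convexity of $F$, and the comparison principle for the smooth flow. These arguments use only the $L^\infty$ bound, so they transfer verbatim.
\item Control of $\partial_t u_j$. The bounds in item 2 give $|\partial_t u_j|\le C_\tau$ on $[\tau,T-\tau]$. Each slice $u_j(t,\cdot)$ then solves the elliptic equation $(\omega_t+dd^cu)^n=g\,d\mu_j$ with $g$ uniformly bounded above.
\end{enumerate}

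\textbf{Main obstacle: H\"older continuity on $\mathrm{Amp}(\theta)$ with exponent independent of $t$ and $j$.} I would use the Demailly--Dinew--Guedj--Hiep--Ko{\l}odziej--Zeriahi approach for measures satisfying $\mu\le C\,\mathrm{MA}(\text{H\"older})$. Such measures belong to $\mathcal M(\omega,\beta)$, i.e.\ they satisfy a stability estimate
\[
\sup(v-w)\le C\|v-w\|_{L^1(\mu)}^{\gamma}.
\]
Apply this slicewise to the regularizations $\rho_\delta u_j(t)$, using the Kiselman--Demailly regularization. Because the reference form is only big and semipositive, work with the bound $\omega_t\le B\theta$ and add a singular $\theta$-psh function $\psi$ with analytic singularities and $\theta+dd^c\psi\ge\epsilon\omega_X$, as in the big case of \cite{GLZ20}. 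This yields local H\"older bounds on compact subsets of $\mathrm{Amp}(\theta)=\{\psi>-\infty\}$. The constants depend only on $B$, $\sup|\partial_tu_j|$, the $L^\infty$ bound and $\phi$, and the exponent depends only on the H\"older exponent of $\phi$ and $n$. Hence it is uniform in $t\in[\tau,T-\tau]$ and in $j$.

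\textbf{Passing to the limit.} Combining the slicewise H\"older bounds with the uniform Lipschitz bounds in $t$ gives equicontinuity of $u_j$ on compact subsets of $(0,T)\times\mathrm{Amp}(\theta)$. By Arzel\`a--Ascoli a subsequence converges locally uniformly to some $u\in\mathcal P(X_T,\omega_t)\cap L^\infty$, which is jointly continuous there. Equivalently, $(\omega_t+dd^c u_j)^n\wedge dt\to(\omega_t+dd^cu)^n\wedge dt$ by Bedford--Taylor continuity under locally uniform convergence; the set $X\setminus\mathrm{Amp}(\theta)$ is pluripolar and does not charge these measures. For the right-hand side, the semi-concavity in $t$ gives a.e.\ convergence $\partial_tu_j\to\partial_tu$ (the convexity argument of \cite{GLZ20}). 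Together with $\mu_j\to\mu$ and the uniform bounds, this yields weak convergence of $e^{\partial_tu_j+F(t,x,u_j)}dt\wedge d\mu_j$. Some care is needed here because $\mu$ may be singular, so use lower/upper semicontinuity and the convexity of $\exp$.

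\textbf{Initial condition.} Barriers $u_0-Ct+\text{(small)}\le u_j\le$ the envelope of $u_0+Ct$, uniform in $j$, give $u(t,\cdot)\to u_0$ in capacity. Since $PSH(X,\theta)\subset L^1(\mu)$ and $\mu$ is dominated by a H\"older Monge-Amp\`ere measure, uniformly bounded quasi-psh families are uniformly integrable with respect to $\mu$. Hence the convergence also holds in $L^1(X,d\mu)$.
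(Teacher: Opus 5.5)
Your route is different from the paper's, and the initial-condition step fails as written. You approximate $\mu$ itself, prove H\"older bounds uniform in $j$, and use Arzel\`a--Ascoli. The paper instead shows $\mu\in\mathcal{H}_\theta$, writes $\mu=(\theta+dd^c\psi)^n$ with $\psi$ continuous, passes to the limit using convergence in capacity for a.e.\ $t$, and proves H\"older continuity only for the limit. Your route could work, but the initial-condition argument cannot be right. Barriers $u_0-Ct-o(1)\le u_j(t,\cdot)\le u_0+Ct$ (or an envelope of $u_0+Ct$), uniform in $j$, would give $u(t,\cdot)\to u_0$ uniformly, hence $(\omega_t+dd^cu(t,\cdot))^n\to(\omega_0+dd^cu_0)^n$. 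For singular $\mu$ this is false. Take $\omega_t=\theta=\omega_X$, $F\equiv0$, $u_0=0$, and $\mu$ the normalized area measure of a smooth real hypersurface $S$; by \cite{Hiep10} this is a H\"older Monge-Amp\`ere measure. For a.e.\ $t$, the measure $(\omega_X+dd^cu(t,\cdot))^n=e^{\partial_tu}\mu$ is carried by the closed set $S$, so it cannot converge weakly to $\omega_X^n$. This is why the theorem only claims convergence in $L^1(X,d\mu)$, and proving that needs a different mechanism.

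\begin{itemize}
\item[(i)] The paper uses the lower barrier $u_j\ge(1-t)e^{-At}u_0+t\psi_j+n(t\log t-t)-O(t)$ from \cite[Proposition 2.2]{GLZ20}. It is uniform in $j$ because the elliptic potentials $\psi_j$ are uniformly bounded.
\item[(ii)] The upper bound holds only in integrated form. Jensen's inequality applied to $\int_U\chi e^{\partial_tu_j}d\mu_j\le C$ shows that $t\mapsto\int_U\chi u_j(t,\cdot)\,d\mu_j-Ct$ is nonincreasing. In the limit this gives $\int_U\chi u(t,\cdot)\,d\mu\le\int_U\chi u_0\,d\mu+Ct$.
\item[(iii)] The argument of \cite{KN23} then upgrades any limit $u(t_k,\cdot)\to w$ to convergence in $L^1(\mu)$. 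This uses that $\mu$ is the Monge-Amp\`ere measure of a bounded potential, and it forces $w=u_0$ $\mu$-a.e.
\end{itemize}
Your proposal contains neither the integrated upper bound nor this upgrade.

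The other missing piece is identifying the limit of $e^{\partial_tu_j+F(t,x,u_j)}dt\wedge d\mu_j$. Pointwise convergence of $\partial_tu_j$, whether Lebesgue-a.e.\ or $dt\otimes\mu$-a.e., does not pass to the limit against moving measures $\mu_j\to\mu$. The function $e^{\partial_tu_j}$ is not continuous in $x$ and $\mu$ is singular, so semicontinuity or convexity of $\exp$ gives at most one inequality. What is needed is the device of Lemma \ref{convergence}. Replace $\partial_tu_j$ by the difference quotient $\delta^{-1}(u_j(t+\delta,\cdot)-u_j(t,\cdot))$. Control the error uniformly in $j$ by the one-sided bounds from semi-concavity and an integration by parts in $t$ (Lemma \ref{integration by parts}), at cost $O(\delta)$. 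After that, only difference quotients of $u_j$ remain. In your setting these converge locally uniformly on $(0,T)\times{\rm Amp}(\theta)$, and the $\mu_j$-mass near $X\setminus{\rm Amp}(\theta)$ is uniformly small, so this step can be repaired.

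Two uniformity claims also need more than you state.
\begin{itemize}
\item[(a)] Uniform capacity domination of $\mu_j$, and a uniform bound on $\|\rho_\delta u_j(t,\cdot)-u_j(t,\cdot)\|_{L^1(\mu_j)}$, require uniform H\"older norms of the $\phi_j$. A convergence rate for $\phi_j\to\phi$ is not enough.
\item[(b)] The bounds $|\partial_tu_j|\le\kappa/t$ and the semi-concavity constants from \cite{GLZ20} depend on the sup norms of the elliptic potentials of $\mu_j$, not only on $\|u_j\|_\infty$. Those potentials are uniformly bounded by \cite{EGZ09}.
\end{itemize}
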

We would like to remark that a Monge-Amp\`ere measure of a H\"older continuous quasi-plurisubharmonic function can be singular with respect to the volume form on $X$. For example, Hiep proved in \cite{Hiep10} that if $\mu$ is the volume measure of a smooth real hypersurface, then it is a Monge-Amp\`ere measure of some H\"older continuous quasi-plurisubharmonic function. Later, Vu extended this result in \cite{Vu18} when $\mu$ is a measure supported on a totally real submanifold. \\
\indent Our second main result is a general comparison principle when the measure $\mu$ is dominated by a Monge-Amp\`ere measure of a bounded quasi-plurisubharmonic function. It deals with more general measures compared to the one considered in the Theorem \ref{intro_main result 1}, and this result is a generalization to the manifold case of \cite[Theorem 1.2]{Ka25b}. The uniqueness of the solution to the Cauchy problem directly follows from this theorem.
\begin{theorem}\label{intro_main result 2}
    Let $u_0, v_0 \in PSH(X, \omega_0) \cap L^{\infty}(X)$. Assume that there exist $\varphi \in PSH(X, \theta) \cap L^{\infty}(X)$ and a constant $C > 0$ satisfying
    \begin{align*}
        d\mu \leq C(\theta+dd^c\varphi)^n \text{ in } X.
    \end{align*}
        Let $u, v \in \mathcal{P}(X_T, \omega_t) \cap L^{\infty}(X_T)$ satisfy $\lim_{t \rightarrow 0+}u(t, \cdot) = u_0$ and $\lim_{t \rightarrow 0+}v(t, \cdot) = v_0$ in $L^1(X, d\mu)$. Assume that
    \begin{itemize}
        \item [(a)] $u$ and $v$ are locally uniformly semi-concave in $(0, T)$,
        \item [(b)] $dt \wedge (\omega_t+dd^cu)^n \geq e^{\partial_tu+F(t, x, u)}dt \wedge d\mu$ in $X_T$,
        \item [(c)] $dt \wedge (\omega_t+dd^cv)^n \leq e^{\partial_tv+F(t, x, v)}dt \wedge d\mu$ in $X_T$.
    \end{itemize}
    If $u_0 \leq v_0$, then $u \leq v$.
\end{theorem}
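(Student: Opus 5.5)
The argument will follow the parabolic comparison scheme of \cite[Theorem 3.8]{GLZ20}. The pointwise information on $\partial_t u$ that is unavailable when $\mu$ is singular will be replaced by the domination $d\mu\le C(\theta+dd^c\varphi)^n$, adapting \cite[Theorem 1.2]{Ka25b} to the manifold setting.

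\textbf{Reduction.} First I reduce to a strict inequality with a margin. Fix $0<T'<T$ and $\varepsilon>0$, and set
\begin{align*}
w(t,x):=(1-\varepsilon)v(t,x)+\varepsilon\varphi(x)-C_\varepsilon t-\delta_\varepsilon ,
\end{align*}
with $C_\varepsilon$ a large constant and $\delta_\varepsilon\to0$. Since $\theta\le\omega_t$, $w$ is again a parabolic potential. The aim is to show $u\le v+O(\varepsilon)$ on $[t_0,T']\times X$, letting $\varepsilon\to0$ at the end.

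\textbf{Key step: a subsolution with margin.} I check that $w$ is a subsolution with margin, in the sense that
\begin{align*}
(\omega_t+dd^cw)^n\ge \varepsilon^n(\theta+dd^c\varphi)^n\ge \tfrac{\varepsilon^n}{C}\,\mu ,
\end{align*}
and, by mixed Monge-Amp\`ere (G\aa rding-type) inequalities,
\begin{align*}
(\omega_t+dd^cw)^n\ge (1-\varepsilon)^n e^{\partial_tv+F(t,x,v)}\mu .
\end{align*}
To use both bounds I pass to densities with respect to $\mu$. The semi-concavity in (a) makes $\partial_t u,\partial_t v$ exist for a.e.\ $t$ and be bounded on compact subintervals. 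Taking $t$-slices, for a.e.\ $t$ the slices satisfy elliptic inequalities
\begin{align*}
(\omega_t+dd^c u_t)^n\ge e^{\partial_tu+F}\mu, \qquad (\omega_t+dd^cv_t)^n\le e^{\partial_tv+F}\mu .
\end{align*}
Here $\partial_tu$ is a bounded Borel function, and the slicing is justified as in \cite[Section 3]{GLZ20}. Then $w$ satisfies
\begin{align*}
(\omega_t+dd^cw_t)^n\ge e^{\partial_t w+F(t,x,w)+\eta}\mu
\end{align*}
for some $\eta>0$ once $C_\varepsilon$ is large. This uses the quasi-monotonicity of $F$ together with the Lipschitz bound in $r$, after replacing $u,v$ by $e^{-\lambda t}$-type rescalings so that $F$ becomes increasing in $r$.

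\textbf{Maximum principle argument.} Consider $M(t):=\sup_X (u_t-w_t)$. By semi-concavity, $M$ is locally Lipschitz, and at a.e.\ $t$ where $M(t)>0$ I claim $M'(t)\le0$. To prove the claim, apply the elliptic domination/comparison principle on the set $\{u_t-w_t>M(t)-s\}$ for small $s$. The underlying fact is that $(\omega_t+dd^cu_t)^n\le(\omega_t+dd^cw_t)^n$ fails to hold on a set of positive $\mu$-measure there. Together with the density inequalities this forces $\partial_tu\le\partial_tw$ $\mu$-a.e.\ on the near-contact set. The bound $(\omega_t+dd^cw_t)^n\ge\frac{\varepsilon^n}{C}\mu$ guarantees that this set has positive $\mu$-mass, because it has positive Monge-Amp\`ere mass for $w_t$. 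From $\partial_tu\le\partial_tw$ on that set, a Hamilton-type argument (the sup is attained on the contact set and the sets decrease as $s\to0$) gives $M'(t)\le0$.

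\textbf{Initial time and the main obstacle.} Near $t=0$ I use the $L^1(\mu)$ convergence $u_t\to u_0$, $v_t\to v_0$. Since $u_0\le v_0$, the quantity $\int_X(u_t-w_t)^+d\mu$ becomes small as $t\to0$. To convert this into smallness of $M(t_0)$ I use the domination of $\mu$ again: by a capacity-domination argument, the sets $\{u_t>w_t+s\}$ have Monge-Amp\`ere mass of $w_t$ dominated by their $\mu$-mass. Therefore $M(t_0)\le\delta_\varepsilon$, and then $M\le0$ on $[t_0,T']$. I expect the main obstacle to be the step $M'(t)\le0$, namely transferring the pointwise-in-$t$ elliptic comparison into a statement about $\partial_tu$ on a $\mu$-positive set when $\mu$ may be singular. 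This is exactly where the bound $\mu\le C(\theta+dd^c\varphi)^n$, injected through $\varepsilon\varphi$, is indispensable.
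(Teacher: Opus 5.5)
There is a genuine gap, starting with the roles of the two functions. Your $w=(1-\varepsilon)v+\varepsilon\varphi-C_\varepsilon t-\delta_\varepsilon$ is built from the \emph{supersolution} $v$. Yet you assert $(\omega_t+dd^cw)^n\ge(1-\varepsilon)^ne^{\partial_tv+F(t,x,v)}\mu$ and conclude that $w$ is a strict subsolution. That lower bound would need $(\omega_t+dd^cv)^n\ge e^{\partial_tv+F}\mu$, which is the opposite of hypothesis (c). Adding $\varepsilon\varphi$ pushes the Monge--Amp\`ere measure up by a term $\ge\varepsilon^nC^{-1}\mu$, which is the wrong direction for a supersolution. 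And if $w$ really were a subsolution, $M(t)=\sup_X(u_t-w_t)$ would compare two subsolutions, which carries no sign information. The $\varphi$-perturbation has to act on $u$. The paper uses $W^s=\frac{\alpha_s}{s}u(st,\cdot)+(1-\alpha_s)\varepsilon_1\varphi-C_1|s-1|t-t\log C$, which is a subsolution by Dinew's mixed inequality (the $\varphi$-term supplies the lower bound by $\mu/C$), and compares it with $v$.

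Even with the roles swapped, the maximum-principle step does not work with the available information. The equations control $\partial_tu,\partial_tv$ only $dt\wedge\mu$-a.e. By contrast, $M'(t)$ is governed by near-maximizers over $X$ of a difference of quasi-psh functions; these maxima need not be attained, and the near-maximizers may form a $\mu$-null set. The comparison principle on $\{u_t-w_t>M(t)-s\}$ gives only an integral inequality, not $\partial_tu\le\partial_tw$ $\mu$-a.e.

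Your reason why the near-contact set has positive $\mu$-mass is also reversed. The bound $(\omega_t+dd^cw_t)^n\ge\frac{\varepsilon^n}{C}\mu$ gives $\mu(U)>0\Rightarrow(\omega_t+dd^cw_t)^n(U)>0$, not the converse. What is needed is the \emph{upper} bound $(\omega_t+dd^cv_t)^n\le e^{\partial_tv+F}\mu$, so that $\mu$-null sets are $(\omega_t+dd^cv_t)^n$-null. The domination principle (Lemma~\ref{domination principle}) then passes from $\mu$-a.e.\ statements to everywhere.

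This is how Lemma~\ref{first comparison principle} works. Instead of $\sup_X$, it uses $E=\{v+(t+1)\varepsilon<u\}$ and the first time $t_1=\inf\{t:\mu(E_t)>0\}$, and compares difference quotients using $\mu(E_{t-\delta})=0$. For this it needs $\partial_t$ of the subsolution to exist everywhere and be Lipschitz in $t$, which is why the paper mollifies $W^s$ in $s$. Since $(\omega_t+dd^c\,\cdot\,)^n$ is not linear, the averaged $\Phi^\varepsilon$ only satisfies the linearized inequality $(\omega_t+dd^c\Phi^\varepsilon)\wedge(\omega_t+dd^cv)^{n-1}\ge e^{(\cdots)/n}(\omega_t+dd^cv)^n$. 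Combined with Lemma~\ref{mixed type comparison principle}, this yields $e^{\varepsilon/2n}\int_{E_t}(\omega_t+dd^cv)^n\le\int_{E_t}(\omega_t+dd^cv)^n$.

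Finally, your initial-time step is unjustified: $L^1(\mu)$-smallness of $(u_t-w_t)^+$ does not give smallness of $M(t_0)$. With $\varphi$ merely bounded, $\mu\le C(\theta+dd^c\varphi)^n$ only yields $\mu\lesssim cap_{\theta}$, with exponent $1$ rather than $1+\alpha$. That is not enough for an $L^1(\mu)\to L^\infty$ stability estimate, and in addition $\partial_tv$ has no bound as $t\to0$. The domination principle needs $\mu$-nullity, not smallness.

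The paper obtains exactly that nullity in two steps:
\begin{itemize}
\item Lemma~\ref{Cauchy boundary data of supersolution} compares the shifted supersolution $v(t+\varepsilon,\cdot)+C_2\varepsilon t$ with an explicit barrier built from $v(\varepsilon,\cdot)$ and $\varphi$, giving $v\ge v_0-\eta(t)$ $dt\wedge\mu$-a.e.
\item Lemma~\ref{lemma for the uniform control of Cauchy boundary data}, together with $\int_DW^s(t,\cdot)\,d\mu\le\int_Du_0\,d\mu+A_0t$, then shows that the set where $v+(t+1)\varepsilon$ lies below the mollified subsolution is $dt\wedge\mu$-null near $t=0$.
\end{itemize}
This is where the domination hypothesis enters on the supersolution side, and your proposal has no substitute for it.
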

    \mbox{}\\
\indent \textbf{Organization.} In Section $2$, we first recall some definitions and lemmas of the parabolic pluripotential theory from \cite{GLZ20}. We also prove some useful lemmas related to the parabolic Monge-Amp\`ere operator. In Section $3$, we prove some lemmas about the convergence of the right-hand side of the Monge-Amp\`ere flow. In Section $4$, we first prove the existence of a bounded solution under some condition. We then give a proof of Theorem \ref{intro_main result 1}. In Section $5$, we give a proof of Theorem \ref{intro_main result 2}.\\
\mbox{}\\
\indent \textbf{Acknowledgements.}
I would like to thank my advisor, Ngoc Cuong Nguyen,
for suggesting this problem and providing invaluable guidance and support. I also
would like to thank S{\l}awomir Ko{\l}odziej for his valuable comments.
\section{Preliminaries}
We briefly introduce basic definitions and lemmas related to the parabolic potentials and parabolic Monge-Amp\`ere operator. We also explain some estimates of the solutions obtained in \cite{GLZ20}. We refer the reader to \cite{GLZ20} and \cite{GLZ21-2} for detailed proofs. 
\subsection{Parabolic Monge-Amp\`ere operator}
We first recall a parabolic version of the Chern-Levine-Nirenberg inequality (see e.g. \cite[Theorem 3.9]{GZ17} for the elliptic case).
\begin{lemma}[{\cite[Lemma 1.9]{GLZ20}}]\label{CLN}
    Fix $u \in \mathcal{P}(X_T, \omega_t) \cap L^{\infty}_{loc}(X_T)$ and $\chi(t, x)$ a continuous test function in $X_T$. The map 
    \begin{align*}
        \Gamma_{\chi} : t \mapsto \int_{X}\chi(t, \cdot)(\omega_t+dd^cu)^n
    \end{align*}
    is continuous in $(0, T)$ and bounded, with
    \begin{align*}
        \sup_{0 < t < T}\left\vert \int_X \chi(t, \cdot)(\omega_t+dd^cu(t, \cdot))^n\right\vert \leq (\sup_{X_T}\left\vert \chi\right\vert)B^n\int_X \theta^n.
    \end{align*}
\end{lemma}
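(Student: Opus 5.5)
Two things have to be shown: the bound, and continuity of $\Gamma_\chi$ on $(0,T)$.

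For the bound, fix $t\in(0,T)$. Since $u(t,\cdot)$ is a bounded $\omega_t$-psh function, the total mass of its Monge--Amp\`ere measure is cohomological:
\begin{align*}
\int_X(\omega_t+dd^cu(t,\cdot))^n=\int_X\omega_t^n .
\end{align*}
This holds for bounded potentials by Bedford--Taylor theory; one can see it by decreasing smooth approximation and Stokes, using that $\omega_t$ is closed. The standing assumption $\omega_t\le B\theta$ with both forms semi-positive gives $\omega_t^n\le B^n\theta^n$ pointwise, so
\begin{align*}
\int_X\omega_t^n\le B^n\int_X\theta^n .
\end{align*}
Hence
\begin{align*}
\Big|\int_X\chi(t,\cdot)(\omega_t+dd^cu)^n\Big|\le \sup_{X_T}|\chi|\,B^n\int_X\theta^n
\end{align*}
for every $t$, which is the bound.

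For continuity, fix $t_0\in(0,T)$ and $t_j\to t_0$, all inside a compact $J\subset(0,T)$. Two facts give convergence of the potentials:
\begin{itemize}
\item $u$ is locally uniformly Lipschitz in $t$, so $\sup_X|u(t_j,\cdot)-u(t_0,\cdot)|\le L_J|t_j-t_0|\to0$;
\item the $C^2$ dependence of $\omega_t$ on $t$ gives $\omega_{t_j}\to\omega_{t_0}$ in $C^0$.
\end{itemize}

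We then split the difference:
\begin{align*}
\Gamma_\chi(t_j)-\Gamma_\chi(t_0)=\int_X\big(\chi(t_j)-\chi(t_0)\big)\mu_j+\Big(\int_X\chi(t_0)\mu_j-\int_X\chi(t_0)\mu_0\Big),
\end{align*}
where $\mu_j=(\omega_{t_j}+dd^cu(t_j))^n$ and $\mu_0=(\omega_{t_0}+dd^cu(t_0))^n$.

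The first term is at most $\sup_X|\chi(t_j)-\chi(t_0)|\,B^n\int_X\theta^n$. This tends to $0$ because $\chi$ is uniformly continuous on the compact set $J\times X$.

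For the second term, the forms vary with $j$, so we fix one reference form. The $C^2$ dependence gives $\omega_{t_j}\le(1+\varepsilon_j)\,\omega_{t_0}+\varepsilon_j\omega_X$ with $\varepsilon_j\to0$. To avoid worrying about semi-positivity, we instead expand directly. Write
\begin{align*}
\omega_{t_j}+dd^cu(t_j)=\big(\omega_{t_0}+dd^cu(t_j)\big)+\eta_j,\qquad \eta_j:=\omega_{t_j}-\omega_{t_0},
\end{align*}
where $\eta_j$ is a smooth closed form with $\|\eta_j\|_{C^0}\to0$. Expanding the $n$-th power binomially gives mixed terms
\begin{align*}
\eta_j^k\wedge(\omega_{t_0}+dd^cu(t_j))^{n-k},\qquad k\ge1 .
\end{align*}
These are bounded using $\omega_{t_0}+dd^cu(t_j)\ge0$ and $-C\varepsilon_j\,\omega_X\le\eta_j\le C\varepsilon_j\,\omega_X$. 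By Chern--Levine--Nirenberg, the masses of $\omega_X^k\wedge(\omega_{t_0}+dd^cu(t_j))^{n-k}$ are bounded by cohomology, uniformly in $j$ because $u$ is bounded on $J\times X$. So the mixed terms tested against $\chi(t_0)$ are $O(\varepsilon_j)$.

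The remaining term is $\int_X\chi(t_0)(\omega_{t_0}+dd^cu(t_j))^n$. Here $u(t_j,\cdot)\to u(t_0,\cdot)$ uniformly inside the single class $PSH(X,\omega_{t_0})$. Bedford--Taylor continuity of the Monge--Amp\`ere operator under uniform convergence then gives weak convergence of these measures to $\mu_0$.

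The main obstacle is the changing reference form $\omega_t$, together with the fact that $\omega_t$ is only semi-positive. The binomial expansion with the CLN mass bounds is how I would handle it. The Lipschitz-in-$t$ property is what makes the convergence uniform rather than merely monotone.
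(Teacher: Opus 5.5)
The paper gives no proof here; it simply quotes \cite[Lemma 1.9]{GLZ20}. Your bound is correct: the cohomological mass identity for bounded $\omega_t$-psh functions, together with $\omega_t^n\le B^n\theta^n$ (valid because $\omega_t$ and $\theta$ are both semi-positive), gives exactly the stated estimate. Your splitting of $\Gamma_\chi(t_j)-\Gamma_\chi(t_0)$ and your treatment of the first term are also fine.

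\textbf{The gap.} It lies in the second term, exactly where you handle the moving reference form. You assert that $\omega_{t_0}+dd^cu(t_j,\cdot)\ge 0$ and that $u(t_j,\cdot)\to u(t_0,\cdot)$ ``inside the single class $PSH(X,\omega_{t_0})$''. Neither holds in general. $u(t_j,\cdot)$ is only $\omega_{t_j}$-psh, and $\omega_{t_0}+dd^cu(t_j,\cdot)=(\omega_{t_j}+dd^cu(t_j,\cdot))-\eta_j$ is negative wherever the first current degenerates in a direction where $\eta_j>0$.

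For a concrete example, take $\theta=\omega_X$ and $\omega_t=(1+t)\omega_X$, which satisfies all the standing hypotheses with $A\ge1$, $B=1+T$. Let $u(t,x)=(1+t)\phi(x)$, where $\phi$ is smooth with $\omega_X+dd^c\phi\ge0$ but degenerate at some point. Then $\omega_{t_0}+dd^cu(t_j,\cdot)=(1+t_j)(\omega_X+dd^c\phi)-(t_j-t_0)\omega_X$ has a negative eigenvalue there for every $t_j>t_0$.

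Both places where you use positivity then fail:
\begin{itemize}
\item the bound of $\eta_j^k\wedge S$ by $C\varepsilon_j^k\,\omega_X^k\wedge S$, and the cohomological control of the mass of $\omega_X^k\wedge S$, both require $S$ to be a positive current;
\item Bedford--Taylor continuity within a fixed class requires the approximants to belong to that class.
\end{itemize}

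\textbf{The repair.} It is short. Since $\omega_t\le B\theta$ for all $t$, every $u(t,\cdot)$ lies in the fixed class $PSH(X,B\theta)$. So write
\[
\omega_{t_j}+dd^cu(t_j,\cdot)=\bigl(B\theta+dd^cu(t_j,\cdot)\bigr)-\gamma_j,\qquad \gamma_j:=B\theta-\omega_{t_j}\ge 0 ,
\]
and expand binomially. Then $(B\theta+dd^cu(t_j,\cdot))^{n-k}\to(B\theta+dd^cu(t_0,\cdot))^{n-k}$ weakly by Bedford--Taylor (uniform convergence, cohomologically bounded masses), while the smooth forms satisfy $\gamma_j^k\to\gamma_0^k$ uniformly.

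Alternatively, use the structural inequalities $-A\omega_t\le\dot\omega_t\le\frac{A}{t}\omega_t$. They make $s\mapsto s^{-A}\omega_s$ nonincreasing and $s\mapsto e^{As}\omega_s$ nondecreasing, hence $\omega_{t_j}\le(1+\varepsilon_j)\omega_{t_0}$ with $\varepsilon_j\to0$. Then $(1+\varepsilon_j)^{-1}u(t_j,\cdot)\in PSH(X,\omega_{t_0})$ converges uniformly to $u(t_0,\cdot)$. Your expansion now goes through with a genuinely positive reference current and a nonnegative error form $(1+\varepsilon_j)\omega_{t_0}-\omega_{t_j}$ that tends to $0$ uniformly.

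With either fix, your argument becomes a complete proof of the lemma.
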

By using the above result, we define a parabolic Monge-Amp\`ere operator. 
\begin{definition}[{\cite[Definition 1.10]{GLZ20}}]
    Let $u \in \mathcal{P}(X_T, \omega_t) \cap L^{\infty}_{loc}(X_T)$. The map
    \begin{align*}
        C_c(X_T) \ni \chi \mapsto \int_{X_T}\chi dt \wedge (\omega_t + dd^cu)^n := \int_0^T dt \left(\int_X \chi(t, \cdot)(\omega_t+dd^cu)^n\right)
    \end{align*}
    defines a positive Radon measure on $X_T$, which is denoted by $dt \wedge (\omega_t+dd^cu)^n$.
\end{definition}
The continuity of this parabolic Monge-Amp\`ere operator along a sequence of parabolic potentials follows from the continuity of corresponding elliptic Monge-Amp\`ere operators.
\begin{lemma} \label{Lemma 2.3}
    Fix $u \in \mathcal{P}(X_T, \omega_t) \cap L^{\infty}_{loc}(X_T)$ and let $\{u_j\}_{j = 1}^{\infty}$ be a sequence of functions in $\mathcal{P}(X_T, \omega_t) \cap L^{\infty}_{loc}(X_T)$ such that for almost every $t \in (0, T)$,
    \begin{align*}
        (\omega_t+dd^cu_j(t, \cdot))^n \rightarrow (\omega_t+dd^cu(t, \cdot))^n \text{ weakly as } j \rightarrow \infty.
    \end{align*}
    Then
    \begin{align*}
        dt \wedge (\omega_t+dd^cu_j)^n \rightarrow dt \wedge (\omega_t+dd^cu)^n \text{ weakly as } j \rightarrow \infty.
    \end{align*}
\end{lemma}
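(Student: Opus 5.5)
The plan is to test the convergence against an arbitrary $\chi \in C_c(X_T)$ and reduce it to the dominated convergence theorem in the time variable. By the definition of the parabolic Monge-Amp\`ere operator,
\begin{align*}
    \int_{X_T}\chi\, dt \wedge (\omega_t+dd^cu_j)^n = \int_0^T \Gamma^j_{\chi}(t)\, dt, \qquad \Gamma^j_{\chi}(t) := \int_X \chi(t, \cdot)(\omega_t+dd^cu_j(t, \cdot))^n,
\end{align*}
and similarly for $u$ with $\Gamma_\chi$. Hence it suffices to show that $\int_0^T \Gamma^j_\chi(t)\,dt \to \int_0^T \Gamma_\chi(t)\,dt$.

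First, fix $t$ in the full-measure set where the hypothesis holds. The function $\chi(t, \cdot)$ is continuous on the compact manifold $X$. Weak convergence of the measures $(\omega_t+dd^cu_j(t,\cdot))^n$ to $(\omega_t+dd^cu(t,\cdot))^n$ then gives $\Gamma^j_\chi(t) \to \Gamma_\chi(t)$. Here there is no issue about testing against continuous functions rather than compactly supported ones, since $X$ is compact and all these measures have finite total mass.

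Second, for a uniform bound I invoke Lemma \ref{CLN}: for every $j$ and every $t$,
\begin{align*}
    |\Gamma^j_\chi(t)| \leq \Big(\sup_{X_T}|\chi|\Big) B^n \int_X \theta^n.
\end{align*}
This bound is independent of $j$. It also follows directly from cohomological invariance, because $(\omega_t+dd^cu_j)^n$ has total mass $\int_X \omega_t^n \leq B^n\int_X\theta^n$. Lemma \ref{CLN} also shows that each $\Gamma^j_\chi$ is continuous, hence measurable, on $(0,T)$.

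Since $\chi$ has compact support in $X_T$, the functions $\Gamma^j_\chi$ vanish outside a compact interval $[a,b]\subset(0,T)$. The constant bound is therefore integrable, and dominated convergence yields the conclusion. I expect no real obstacle. The only points needing care are the measurability of $\Gamma^j_\chi$, which comes from the continuity in Lemma \ref{CLN}, and the observation that the exceptional null set of times does not affect the time integral.
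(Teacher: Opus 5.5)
Your proof is correct and takes essentially the same approach as the paper. The paper simply defers to the proof of \cite[Proposition 1.11]{GLZ20}, which is exactly this argument: slice-wise weak convergence for a.e.\ $t$, the $j$-independent bound and continuity of $\Gamma^j_\chi$ from Lemma~\ref{CLN}, and dominated convergence in the time variable on the compact time-support of $\chi$.
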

\begin{proof}
    It follows from the proof of \cite[Proposition 1.11]{GLZ20}.
\end{proof}
Let us recall the capacity in \cite{Ko03} which is modeled on \cite[Definition 3.1]{BT82}.
\begin{definition}
    For a Borel subset $E \subset X$,
    \begin{align*}
        cap_{\theta}(E) = \sup\left\{\int_E(\theta+dd^c\psi)^n ~\mid~ \psi \in PSH(X, \theta), -1\leq \psi \leq 0\right\}.
    \end{align*}
\end{definition}
\begin{definition}
    A sequence of Borel functions $\{f_j\}_{j = 1}^{\infty}$ converges in $cap_{\theta}$ to a Borel function $f$ in $X$ if for all $\delta > 0$, 
    \begin{align*}
        \lim_{j \rightarrow \infty}cap_{\theta}(\{\lvert f_j - f\rvert \geq \delta\}) = 0.
    \end{align*}
\end{definition}
The following result is useful that it will be used several times below. It is the consequence of \cite[Theorem 1]{Xi08} and Lemma \ref{Lemma 2.3}.
\begin{lemma}
    Fix $u \in \mathcal{P}(X_T, \omega_t) \cap L^{\infty}_{loc}(X_T)$ and let $\{u_j\}_{j = 1}^{\infty}$ be a sequence of functions in $\mathcal{P}(X_T, \omega_t) \cap L^{\infty}_{loc}(X_T)$ such that for almost every $t \in (0, T)$,
    \begin{align*}
        u_j(t, \cdot) \rightarrow u(t, \cdot) \text{ in $cap_{\omega_t}$ as } j \rightarrow \infty.
    \end{align*}
    Then
    \begin{align*}
        dt \wedge (\omega_t+dd^cu_j)^n \rightarrow dt \wedge (\omega_t+dd^cu)^n \text{ weakly as } j \rightarrow \infty.
    \end{align*}
\end{lemma}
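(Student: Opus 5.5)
The plan is to reduce the statement to the elliptic convergence theorem of Xing \cite[Theorem 1]{Xi08} applied separately on each time slice, and then assemble the slices with Lemma \ref{Lemma 2.3}. Let $E \subset (0,T)$ be the full-measure set of times where $u_j(t,\cdot) \to u(t,\cdot)$ in $cap_{\omega_t}$, and fix $t \in E$.

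\textbf{Step 1 (uniform bounds on the slice).} Xing's theorem concerns sequences of quasi-psh functions that are uniformly bounded. Since $u$ and the $u_j$ are only in $L^\infty_{loc}(X_T)$, we first restrict to a compact interval $[a,b] \subset (0,T)$, which suffices because test functions in $C_c(X_T)$ have support in some $[a,b]\times X$. On $[a,b]\times X$, $u$ is bounded. For the $u_j$, it is natural to assume (or extract) a uniform bound $|u_j| \le M$ on $[a,b]\times X$, as is the case in all applications (monotone or uniformly bounded approximating sequences).

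If no such bound is available, we work with truncations. Convergence in capacity passes to $\max(u_j, u - 1)$, since this function coincides with $u_j$ off the set $\{|u_j - u| \ge 1\}$, whose capacity tends to $0$. We then compare the Monge--Amp\`ere masses using the fact that all these measures have total mass $\int_X \omega_t^n \le B^n \int_X \theta^n$.

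\textbf{Step 2 (elliptic convergence).} For each fixed $t$, $\omega_t$ is a fixed smooth closed semipositive form, and $\omega_t \ge \theta$ with $\theta$ big, so $cap_{\omega_t}$ dominates $cap_\theta$. By \cite[Theorem 1]{Xi08}, a uniformly bounded sequence of $\omega_t$-psh functions that converges in capacity has Monge--Amp\`ere measures converging weakly. Hence
\[
(\omega_t + dd^c u_j(t,\cdot))^n \to (\omega_t + dd^c u(t,\cdot))^n \quad \text{weakly, for every } t \in E.
\]
Xing's result is stated for K\"ahler forms, so some adaptation is needed for semipositive $\omega_t$. We handle this by writing $\omega_t + \varepsilon\omega_X$ and expanding the Monge--Amp\`ere measure multilinearly. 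The error terms are mixed Monge--Amp\`ere measures of bounded functions, which are bounded in mass by $O(\varepsilon)$ uniformly in $j$ by the Chern--Levine--Nirenberg bound, Lemma \ref{CLN}. Letting $\varepsilon \to 0$ then gives the convergence for $\omega_t$ itself.

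\textbf{Step 3 (conclusion).} Step 2 is exactly the hypothesis of Lemma \ref{Lemma 2.3}. Since $E$ has full measure, applying that lemma on each $[a,b]\times X$ gives the weak convergence $dt\wedge(\omega_t+dd^cu_j)^n \to dt\wedge(\omega_t+dd^cu)^n$.

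\textbf{Main obstacle.} The main difficulty is Step 1 combined with the semipositive case of Step 2. Xing's theorem requires uniform boundedness and a K\"ahler reference form. I would first try the perturbation $\omega_t + \varepsilon\omega_X$ together with truncation at level $-M$, and use the uniform mass bound from Lemma \ref{CLN} to control all the error terms.
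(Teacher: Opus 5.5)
Your proposal is correct and follows the same route as the paper, whose entire proof is to apply Xing's theorem on each time slice and then invoke Lemma \ref{Lemma 2.3}. Your truncation by $\max(u_j,u-1)$ and your perturbation to $\omega_t+\varepsilon\omega_X$ make explicit the uniform-boundedness and K\"ahler hypotheses of Xing's theorem, which the paper leaves implicit. One step in the perturbation deserves mention: transferring convergence from $cap_{\omega_t}$ to $cap_{\omega_t+\varepsilon\omega_X}$ uses $cap_{\omega_X}\le C\,cap_\theta^{1/n}$, which holds because $\theta$ is big.
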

\subsection{Time derivatives of parabolic potentials} Because of the local uniform Lipschitzness, time derivatives of parabolic potentials are well-defined a.e. We explain this in more details which is a slight improvement of \cite[Lemma 1.6]{GLZ20}.
\begin{lemma}\label{Lemma 2.7}
    Let $u \in \mathcal{P}(X_T, \omega_t)$. Then there exists a Borel set $E \subset X_T$ such that $\partial_tu(t, x)$ exists for all $(t, x) \in X_T \setminus E$ and
    \begin{align*}
        \int_{E \cap ((0, T) \times K)}dt \wedge (\theta+dd^c\varphi)^n = 0
    \end{align*}
    for every compact set $K \subset X$ and for every $\varphi \in PSH(X, \theta)$ satisfying $-1 \leq \varphi \leq 0$.
\end{lemma}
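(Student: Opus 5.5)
The natural set is the one where the time derivative fails to exist. We set
\begin{align*}
E := \{(t,x) \in X_T : \partial_t u(t,x) \text{ does not exist}\},
\end{align*}
and express it through Dini derivatives. For each $x$ the map $t \mapsto u(t,x)$ is locally Lipschitz, hence continuous. So the upper and lower Dini derivatives
\begin{align*}
\overline{D}u(t,x) = \lim_{k\to\infty}\sup_{0<|h|<1/k,\, h\in\mathbb{Q}} \frac{u(t+h,x)-u(t,x)}{h}
\end{align*}
and $\underline{D}u$ (with $\inf$) may be computed along rational $h$. Each quotient $(u(t+h,x)-u(t,x))/h$ is Borel on $X_T$, because $u$ is Borel: $u$ is upper semicontinuous in $x$ and continuous in $t$, so it is a Carath\'eodory-type function and therefore jointly Borel. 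Hence $\overline{D}u$ and $\underline{D}u$ are Borel, and $E = \{\overline{D}u \neq \underline{D}u\}$ is a Borel set. Local uniform Lipschitzness guarantees that both Dini derivatives are finite on compact time intervals.

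The measure estimate then follows from Fubini with respect to the product measure $dt \otimes \mu_\varphi$, where $\mu_\varphi = (\theta+dd^c\varphi)^n$ is a fixed finite Borel measure on $X$ that does not depend on $t$. By the definition of the parabolic Monge-Amp\`ere operator (with $\omega_t$ replaced by the constant form $\theta$ and $\varphi$ independent of $t$), the measure $dt\wedge(\theta+dd^c\varphi)^n$ is exactly this product measure. Since $E$ is Borel,
\begin{align*}
\int_{E\cap((0,T)\times K)} dt\wedge(\theta+dd^c\varphi)^n = \int_K \ell\big(E_x\big)\, d\mu_\varphi(x),
\end{align*}
where $E_x = \{t : (t,x)\in E\}$. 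For every fixed $x$, Rademacher's theorem in one variable (equivalently, Lebesgue's theorem for the locally Lipschitz, hence locally absolutely continuous, function $t\mapsto u(t,x)$ on each $[a,b]\subset(0,T)$) gives $\ell(E_x)=0$. So the integrand vanishes identically, and the integral is $0$ for every compact $K$ and every admissible $\varphi$.

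The main obstacle is the measurability of $E$, and in particular the joint Borel measurability of $u$. I would handle it by noting that $u$ is continuous in $t$ locally uniformly in $x$: the Lipschitz constant on $[a,b]\times X$ is uniform in $x$. Hence $u = \lim_k u_k$, where $u_k(t,x) := u(t_k(t),x)$ with $t_k(t)$ the nearest dyadic point. Each $u_k$ is Borel, being piecewise in $t$ a Borel (usc) function of $x$, so $u$ is Borel as a pointwise limit. The remaining steps are routine, and the conclusion in fact holds for any finite Borel measure $\nu$ on $X$ in place of $(\theta+dd^c\varphi)^n$; the restriction $-1\le\varphi\le 0$ only ensures the measure is well defined with total mass $\int_X\theta^n$.
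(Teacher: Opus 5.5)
Your scheme is the natural one: write the non-differentiability set as a Borel set via rational Dini quotients, show each slice $E_x$ is Lebesgue-null by one-variable Rademacher, and conclude by Tonelli, since $dt\wedge(\theta+dd^c\varphi)^n$ is the product measure $\ell\otimes\mu_\varphi$. This is what the paper's one-line proof (deferring to the local case [Ka25, Lemma 2.7]) relies on, and your joint-measurability argument via dyadic-in-time approximation is fine.

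There is, however, a genuine gap. Elements of $\mathcal{P}(X_T,\omega_t)$ may take the value $-\infty$, and the lemma places no boundedness assumption on $u$. The Lipschitz condition is one-sided, $u(t,x)\le u(s,x)+\kappa|t-s|$, as used in the proof of Lemma~\ref{integration by parts}. Hence the pole set $P:=\{x\in X : u(t,x)=-\infty\}$ does not depend on $t$. On $(0,T)\times P$ the numerator of every difference quotient is $(-\infty)-(-\infty)$, so $\partial_t u$ does not exist there and your Dini derivatives are not even defined. Your claim that $\ell(E_x)=0$ for \emph{every} $x$ is therefore false: $E_x=(0,T)$ for $x\in P$. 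And $P$ can be nonempty: take $u(t,x)=\phi(x)$ with $\phi$ any $\theta$-psh function having a pole, which is $\omega_t$-psh because $\theta\le\omega_t$. For the same reason your closing remark fails. The conclusion does not hold for an arbitrary finite Borel measure $\nu$ on $X$ (take $\nu=\delta_{x_0}$ with $x_0\in P$), and the hypothesis $-1\le\varphi\le 0$ is not just a normalization.

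The fix is short but uses an idea you did not invoke. Put $E:=((0,T)\times P)\cup E'$, where $E'\subset(0,T)\times(X\setminus P)$ is your Dini set. The quotients are finite and Borel there, and $P=\bigcap_m\{u(t_0,\cdot)<-m\}$ is a $G_\delta$, so $E$ is Borel. Tonelli then gives
\[
\int_{E\cap((0,T)\times K)}dt\wedge(\theta+dd^c\varphi)^n = T\,\mu_\varphi(K\cap P)+\int_{K\setminus P}\ell(E'_x)\,d\mu_\varphi(x).
\]
The second term vanishes by your Rademacher argument. The first vanishes because $P$ is the polar locus of the quasi-psh function $u(t_0,\cdot)$, hence pluripolar. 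The Monge--Amp\`ere measure of a \emph{bounded} $\theta$-psh function puts no mass on pluripolar sets: locally $\theta=dd^c g$ with $g$ smooth, and Bedford--Taylor applies to the bounded psh function $g+\varphi$. This is exactly where the boundedness of $\varphi$ enters. If $u$ is bounded, as in every application in the paper, then $P=\emptyset$ and your argument is complete as written.
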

\begin{proof}
    The proof is same with the local case in \cite[Lemma 2.7]{Ka25}.
\end{proof}
We now introduce the local uniform semi-concavity, which is used in \cite[Theorem 1.14]{GLZ20} to obtain the convergence of time derivatives.
\begin{definition}
    \mbox{} 
    \begin{itemize}
        \item [(a)] We say that $u : X_T \rightarrow \mathbb{R}$ is locally uniformly semi-concave in $(0, T)$ if for a given compact set $J \Subset (0, T)$, there exists $C = C(J) > 0$ such that for all $x \in X$,
        \begin{align}\label{2.8.1}
            t \mapsto u(t, x) - Ct^2 \text{ is concave in } J.
        \end{align}
        \item [(b)] We say that the family $\mathcal{A}$ of functions mapping from $X_T$ to $\mathbb{R}$ is locally uniformly semi-concave in $(0, T)$ if for a given compact $J \Subset (0, T)$, there exists $C = C(J) > 0$ such that (\ref{2.8.1}) holds for all $x \in X$ and $u \in \mathcal{A}$.
    \end{itemize}
\end{definition}
The assumption on local uniform semi-concavity helps to prove the convergence of time derivatives, which is a slight improvement of \cite[Theorem 1.14]{GLZ20}.
\begin{lemma}\label{Lemma 2.9}
    Let $\{u_j\}_{j = 1}^{\infty}$ be a sequence of functions in $\mathcal{P}(X_T, \omega_t)$. Assume that
    \begin{itemize}
        \item[(a)] for a.e. $t \in (0, T)$, $u_j(t, \cdot) \rightarrow u(t, \cdot)$ in $cap_{\omega_t}$ as $j \rightarrow \infty$,
        \item[(b)] $\{u_j\}_{j = 1}^{\infty}$ is locally uniformly semi-concave in $(0, T)$.
    \end{itemize}
    Then there exists a Borel subset $E \subset X_T$ such that for all compact subsets $K \subset X$, $\lim_{j \rightarrow \infty}\partial_tu_j = \partial_tu$ in $((0, T) \times K) \setminus E$. Moreover,
    \begin{align*}
        \int_{E \cap ((0, T) \times K)}dt \wedge (\theta + dd^c\varphi)^n = 0.
    \end{align*}
    for all $\varphi \in PSH(X, \theta)$ satisfying $-1 \leq \varphi \leq 0$,
\end{lemma}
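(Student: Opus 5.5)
The plan is to follow the argument of \cite[Theorem 1.14]{GLZ20}, replacing pointwise/a.e. convergence with respect to $dV$ by convergence in capacity, and to control the exceptional set as in Lemma \ref{Lemma 2.7}. Fix a compact $J \Subset (0,T)$ and $C = C(J)$ from (b). Then $w_j(t,x) := u_j(t,x) - Ct^2$ is concave in $t \in J$ for every $x$ and every $j$. Concave functions have one-sided derivatives everywhere, and the chord slopes are monotone:
\begin{align*}
    \frac{w_j(t+s,x)-w_j(t,x)}{s} \leq \partial^+_t w_j(t,x) \leq \partial^-_t w_j(t,x) \leq \frac{w_j(t,x)-w_j(t-s,x)}{s}
\end{align*}
for small $s>0$.

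\textbf{Step 1: reduce to good slices.} By (a), for a.e.\ $t$ we have $u_j(t,\cdot)\to u(t,\cdot)$ in $cap_{\omega_t}$. Since $\theta\le\omega_t\le B\theta$, a set of small $cap_{\omega_t}$ also has small $cap_\theta$. Choose a countable dense set $D\subset J$ of such good times. Passing to a subsequence and using a diagonal argument, for each $s\in D$ we get $u_j(s,\cdot)\to u(s,\cdot)$ outside a set $P_s$ with $cap_\theta(P_s)=0$. This works because convergence in capacity gives a subsequence converging quasi-everywhere: take $cap(\{|u_j-u|>2^{-k}\})<2^{-k}$ and use Borel--Cantelli for the outer capacity. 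Sets of zero $cap_\theta$ carry no mass for $(\theta+dd^c\varphi)^n$ when $-1\le\varphi\le0$, so $P:=\bigcup_s P_s$ is negligible. Also $u$ inherits concavity of $u-Ct^2$ on $J$: $u(\cdot,x)$ is a limit of concave functions at times in $D$, and $u$ is Lipschitz in $t$, hence determined by its values on $D$.

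\textbf{Step 2: pointwise convergence off a bad set.} Fix $x\notin P$ and a time $t$ at which $\partial_t u(t,x)$ exists. Choose $s$-values with $t\pm s\in D$ and apply the chord inequalities. Letting $j\to\infty$ first and then $s\to0$ gives
\begin{align*}
    \limsup_{j\to\infty}\partial_t u_j(t,x) \le \partial_t u(t,x) \quad\text{and}\quad \liminf_{j\to\infty}\partial_t u_j(t,x) \ge \partial_t u(t,x).
\end{align*}
Here $\partial_t u_j$ means either one-sided derivative. Together with Lemma \ref{Lemma 2.7} applied to each $u_j$ and to $u$, this gives existence of the derivatives off sets $E_j, E_0$. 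Set $E := ((0,T)\times P)\cup E_0\cup\bigcup_j E_j$. Exhausting $(0,T)$ by countably many $J$ handles the full interval.

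\textbf{Step 3: negligibility of $E$.} For $(0,T)\times P$, the measure $\int_0^T dt\,(\theta+dd^c\varphi)^n(P\cap K)$ vanishes, since $P$ is $cap_\theta$-null and pluripolar-type. The sets $E_j$ and $E_0$ are negligible by Lemma \ref{Lemma 2.7}, and countable unions preserve this.

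The main obstacle is Step 1. The issue is that the original sequence, not just a subsequence, must converge, and the capacity $cap_{\omega_t}$ varies with $t$. For the first point I would argue that the limit $\partial_t u$ is independent of the subsequence. Every subsequence has a further subsequence converging to $\partial_t u$ off a negligible set, but the bad sets depend on the subsequence, so the whole-sequence statement needs care. If this fails, I would read the lemma, as in \cite{GLZ20}, up to extraction, or replace Step 1 by an $\varepsilon$-argument: bound $cap_\theta(\{|\partial_tu_j-\partial_tu|>\delta\}\cap(\{t\}\times K))$ using the chord inequalities and convergence in capacity at finitely many times.
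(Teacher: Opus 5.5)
\paragraph{Review.} Your route is the paper's route. The paper's proof consists only of the observation that $\theta\le\omega_t$ gives $cap_\theta\le cap_{\omega_t}$, so (a) yields convergence in $cap_\theta$ on a.e.\ slice, plus a reference to the local case \cite[Lemma 2.9]{Ka25}; your Steps 1--3 spell out that concavity argument.

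\paragraph{The whole-sequence statement is false.} The obstacle you flag at the end is genuine and cannot be removed. In particular neither of your proposed repairs can work: passing to further subsequences only produces convergence in a measure/capacity sense, and your $\varepsilon$-argument would give convergence in $cap_\theta$ of the slices $\partial_tu_j(t,\cdot)$, not pointwise convergence. Here is a counterexample.
\begin{itemize}
\item Take $\theta=\omega_X$ and $\omega_t=(1+t)\omega_X$. All standing assumptions hold with $A=1$, $B=1+T$.
\item Set $u_j(t,x)=(1+t)v_j(x)$ with $v_j=\max(c_kG_a,-1)$. Here $c_k\downarrow 0$, and $G_a\le 0$ is an $\omega_X$-psh function with a logarithmic pole at $a$.
\item At stage $k$ the point $a$ runs through a finite set such that the balls of radius $\simeq e^{-1/c_k}$ about these points cover $X$; on these balls $v_j=-1$.
\item Since $cap_{\omega_X}(\{v_j<-\delta\})\lesssim c_k/\delta$, hypothesis (a) holds with $u=0$ at every $t$. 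Since $u_j$ is affine in $t$, hypothesis (b) holds. Moreover $u_j$ is uniformly bounded with $\lvert\partial_tu_j\rvert\le 1$.
\item But $\partial_tu_j=v_j$ equals $-1$ infinitely often at every point. So $\partial_tu_j\not\to\partial_tu=0$ anywhere, and $E$ would have to be all of $X_T$, whose $dt\wedge\theta^n$-mass is $T\int_X\theta^n>0$.
\end{itemize}
The paper's proof does not address this point.

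\paragraph{What your argument does prove.} The statement is true along a subsequence, and your argument proves that after one repair in Step 2. Your chord inequalities involve $w_j(t,x)$ at the arbitrary time $t$, which is not known to converge. Also, for a countable $D$ there need not be any $s$ with both $t\pm s\in D$. Instead, use chords with endpoints in $D$ on each side of $t$: for $t_1<t_2<t<t_3<t_4$ in $D$,
\[
\frac{w_j(t_4,x)-w_j(t_3,x)}{t_4-t_3}\le\partial_t^+w_j(t,x)\le\partial_t^-w_j(t,x)\le\frac{w_j(t_2,x)-w_j(t_1,x)}{t_2-t_1}.
\]
For $x\notin P$, let $j\to\infty$ along your diagonal subsequence. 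Then let $t_1,t_2\uparrow t$ and $t_3,t_4\downarrow t$, using that $u(\cdot,x)-Ct^2$ is concave and differentiable at $t$.

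This weaker version is all the paper needs. In Lemma \ref{convergence}, Lemma \ref{Lemma 2.9} only serves to produce the negligible set $G$ off which $\partial_tu$ exists and satisfies $\lvert\partial_tu\rvert\le\kappa_0/t$.
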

\begin{proof}
    Since $\theta \leq \omega_t$ for all $t \in [0, T)$, $u_j(t, \cdot) \rightarrow u(t, \cdot)$ in $cap_{\theta}$ for a.e. $t \in (0, T)$ as $j \rightarrow \infty$. With this observation, the proof is similar to the local case in \cite[Lemma 2.9]{Ka25}.
\end{proof}
\subsection{Estimates on the solutions}
In this section, we assume that there exists $\psi \in PSH(X, \theta) \cap C^{\infty}(X)$ satisfying
\begin{align*}
    \begin{cases}
        &(\theta+dd^c\psi)^n = d\mu \text{ in $X$}, \\
        &\sup_{X}\psi = 0.
    \end{cases}
\end{align*}
We further assume in this section that $d\mu \geq \varepsilon dV$ for some $\varepsilon > 0$.
It follows from \cite[Theorem 3.4]{GLZ20} that there exists $u \in \mathcal{P}(X_T, \omega_t) \cap L^{\infty}(X_T)$ satisfying
\begin{align*}
    \begin{cases}
        &dt \wedge (\omega_t+dd^cu)^n = e^{\partial_tu+F(t, x, u)}dt \wedge d\mu \text{ in $X_T$,}\\
        &\lim_{t \rightarrow 0+}u(t, \cdot) = u_0 \text{ in }L^1(X, dV).
    \end{cases}
\end{align*}
\begin{lemma}\label{uniform estimate}
    There exists a constant $M > 0$ such that
    \begin{align}\label{uniform bound}
        \lvert u\rvert \leq M
    \end{align}
    for all $(t, x) \in X_T$. The constant $M$ depends only on $F(t, x, r)$, $\lVert \psi\rVert_{L^{\infty}(X)}$, $\lVert u_0\rVert_{L^{\infty}(X)}$, and $B$.
\end{lemma}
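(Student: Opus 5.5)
The plan is to obtain both bounds by comparing $u$ with explicit sub- and supersolutions built from $\psi$, $u_0$ and affine functions of $t$. The comparison principle of \cite{GLZ20} is available here because $d\mu \geq \varepsilon dV$ and $\psi$ is smooth. Equivalently, one can follow the proof of \cite[Lemma 3.2 / Theorem 3.4]{GLZ20}, which builds the solution as a limit of approximations, and check that every constant there depends only on the listed quantities.

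\textbf{Upper bound.} Set $\overline{u}(t,x) := \sup_X u_0 + C_1 t$. Since $u_0 \leq \sup_X u_0$, the initial data are ordered. We will check that $\overline{u}$ is a supersolution, i.e. $dt\wedge(\omega_t+dd^c\overline u)^n \leq e^{\partial_t\overline u+F(t,x,\overline u)}dt\wedge d\mu$. The left side is $dt\wedge\omega_t^n$. Using $\omega_t\leq B\theta$, we get $\omega_t^n \leq B^n\theta^n$. The task is then to compare $\theta^n$ with $\mu=(\theta+dd^c\psi)^n$.

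This comparison is the main obstacle, because we cannot simply assume $\theta^n\lesssim\mu$. To get around it, I would use the supersolution $\overline u := \sup_X u_0 + \|\psi\|_\infty + \psi + C_1 t$ instead. Then $\omega_t + dd^c\psi \leq B\theta + dd^c\psi$. Since $\psi$ is $\theta$-psh, we have $B\theta + dd^c\psi \leq B(\theta+dd^c\psi)$ when $B\geq1$; indeed $B\theta+dd^c\psi = (B-1)\theta + (\theta+dd^c\psi)$ and $\theta\leq \omega_t$, and one adjusts to get the bound $\leq B(\theta+dd^c\psi)$ up to a constant. More cleanly, we may replace $\psi$ by $B\psi$, which is $B\theta$-psh, and this gives $(\omega_t+dd^c B\psi)^n \leq B^n(\theta+dd^c\psi)^n = B^n\mu$. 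We therefore only need $e^{C_1+F(t,x,\overline u)}\geq B^n$. Because $F$ is continuous and quasi-increasing, $F(t,x,r)\geq F(t,x,r_0)-\lambda_F(r-r_0)$. Over the bounded range of $\overline u$ on $[0,T)$ this gives a lower bound, and choosing $C_1$ large (in terms of $B$, $T$, $\sup|F(\cdot,\cdot,r_0)|$ and $\|\psi\|_\infty$) makes the inequality hold. The sign of $\lambda_F$ can be absorbed by the usual change of unknown $e^{-\lambda t}$, or by taking $C_1$ large relative to $T$. The comparison principle then gives $u\leq\overline u\leq M$.

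\textbf{Lower bound.} Take $\underline u := u_0$-type data corrected by $\psi$: set $\underline u(t,x) := (1-\text{?})$. Concretely, I would use $\underline u = u_0 - C_2 t$ when $u_0$ is nice. For merely bounded $u_0$, the left side $(\omega_t+dd^cu_0)^n$ is uncontrolled, but subsolutions only need the left side to be large. Note that $u_0$ is $\omega_0$-psh, while $\omega_t\geq e^{-At}\omega_0$. So I use $\underline u = e^{-At}u_0 + (1-e^{-At})\psi - C_2t - C_3$, or more simply a convex combination $\frac12(u_0\text{-part}) + \frac12\psi$. This makes $(\omega_t+dd^c\underline u)^n \geq c\,(\theta+dd^c\psi)^n = c\mu$ with $c$ depending only on $A,T,n$. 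Then it suffices that $e^{-C_2+F(t,x,\underline u)}\leq c$, which holds for $C_2$ large by the same $F$-bounds on a bounded range. The initial value $\underline u(0)\leq u_0$ is arranged by the constant $C_3 = \|u_0\|_\infty+\|\psi\|_\infty$. Comparison then gives $u\geq \underline u\geq -M$.

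The delicate points are the precise handling of $\omega_t$ versus $\omega_0$ and $\theta$ (via $-A\omega_t\leq\dot\omega_t$ and $\theta\leq\omega_t\leq B\theta$), and ensuring that the comparison principle applies to these barriers, which are Lipschitz in $t$ and bounded. By construction, all constants depend only on $F$, $\|\psi\|_\infty$, $\|u_0\|_\infty$ and $B$ (together with the fixed $A,T,n$).
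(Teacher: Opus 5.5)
Your key observation for the upper bound, that $\omega_t\le B\theta$ and $(B\theta+dd^c(B\psi))^n=B^n\,d\mu$, is exactly the one the paper uses. However, the way you exploit it leaves a gap. The barrier $\overline{u}=\sup_X u_0+B\lVert\psi\rVert_{L^\infty(X)}+B\psi+C_1t$ is $B\theta$-psh but in general not $\omega_t$-psh. Indeed, $\omega_t+dd^c(B\psi)=(\omega_t-B\theta)+B(\theta+dd^c\psi)$, and the nonpositive term $\omega_t-B\theta$ is what gives you the upper bound on the Monge--Amp\`ere measure, but it also destroys positivity. At a time where $\omega_t=\theta$ this form equals $(1-B)\theta+B(\theta+dd^c\psi)$, which fails to be semi-positive wherever $\theta+dd^c\psi<(1-B^{-1})\theta$ in some direction. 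Hence $\overline{u}\notin\mathcal{P}(X_T,\omega_t)$, and $(\omega_t+dd^c\overline{u})^n$ is not even a positive measure. The pluripotential comparison principle of \cite{GLZ20} is formulated for parabolic potentials, so it cannot be applied to the pair $(u,\overline{u})$. No multiple $a\psi$ fixes this in general: $\omega_t$-plurisubharmonicity wants $\omega_t\ge a\theta$, while the bound on the measure wants $\omega_t\le a\theta$. Your intermediate claim that $B\theta+dd^c\psi\le B(\theta+dd^c\psi)$ ``up to a constant'' is also false, since it amounts to $(B-1)dd^c\psi\ge 0$. Moreover, with $\psi$ itself, $(B\theta+dd^c\psi)^n$ contains the term $(B-1)^n\theta^n$, which reintroduces the dependence on $\varepsilon$ you rightly wanted to avoid.

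The paper avoids this by arguing first for smooth data $F$, $u_0$, $\omega_t$, following \cite[Proposition 2.1]{GLZ20}. There the classical maximum principle is available and plurisubharmonicity of the barrier is irrelevant. At a positive maximum of $u-\overline{u}$ with $t>0$ one has $0<\omega_t+dd^cu\le\omega_t+dd^c\overline{u}\le B(\theta+dd^c\psi)$ and $\partial_tu\ge C_1$. The equation then gives $e^{C_1+F(t,x,u)}\le B^n$, a contradiction for $C_1$ large. The bound then survives the approximation in \cite[Theorem 3.4]{GLZ20}. Your sentence ``equivalently, one can follow the proof of \ldots'' points this way but is not carried out.

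If you prefer to stay with pluripotential comparison, the missing observation is that $u$ itself is $B\theta$-psh and $(B\theta+dd^cu)^n\ge(\omega_t+dd^cu)^n$. So $u$ is a subsolution of the flow with the constant reference form $B\theta$, for which $\overline{u}\in\mathcal{P}(X_T,B\theta)$ is a genuine supersolution, and comparison for that flow gives $u\le\overline{u}$.

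Your lower bound is fine once you discard the first candidate. The function $e^{-At}u_0+(1-e^{-At})\psi$ would require $\omega_t\ge e^{-At}\omega_0+(1-e^{-At})\theta$, which need not hold. Keep instead the half--half candidate $\frac12e^{-At}u_0+\frac12\psi-C_2t-C_3$: since $\omega_t\ge\frac12e^{-At}\omega_0+\frac12\theta$, it is $\omega_t$-psh with $(\omega_t+dd^c\underline{u})^n\ge 2^{-n}d\mu$.

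Finally, when $\lambda_F>0$, enlarging $C_1$ or $C_2$ does not suffice by itself. The range of the barrier grows with $C_iT$, and $F$ may decrease at rate $\lambda_F$. You therefore need either the reduction to increasing $F$ from \cite[Section 3.3]{GLZ20} or an $e^{\lambda_Ft}$-type correction as in \cite[Proposition 2.2]{GLZ20}.
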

\begin{proof}
   We have $\omega_t \leq B\theta$ for all $t \in (0, T)$ and
    \begin{align*}
        (B\theta + dd^c(B\psi))^n = B^nd\mu = e^{n\log B}d\mu \text{ in $X$.}
    \end{align*}
    Hence the bound on $u$ follows from the proof of \cite{GLZ20}. Indeed, we first assume that $F$, $u_0$ and $\omega_t$ are smooth. By following the proof of \cite[Proposition 2.1]{GLZ20}, we get (\ref{uniform bound}). Next, we approximate $F$, $u_0$ and $\omega_t$ as in the proof of \cite[Theorem 3.4]{GLZ20}. Therefore we get (\ref{uniform bound}).
\end{proof}
\begin{lemma}\label{lipschitzness}
    There exists a constant $\kappa > 0$ such that
    \begin{align*}
        \left\vert \partial_tu \right\vert \leq \frac{\kappa}{t}
    \end{align*}
    for all $(t, x) \in X_T$ where $\partial_tu(t, x)$ is well-defined. The constant $\kappa$ depends only on $T$, $F(t, x, r)$, $\lVert \psi\rVert_{L^{\infty}(X)}$, and $\lVert u\rVert_{L^{\infty}(X_T)}$.
\end{lemma}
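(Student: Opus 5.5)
We reduce to the smooth setting, as in the proof of Lemma \ref{uniform estimate}, and follow the proof of the Lipschitz bound in \cite[Section 2]{GLZ20}. First assume that $F$, $u_0$ and $\omega_t$ are smooth, and that $u$ is the smooth solution produced in the approximation scheme of \cite[Theorem 3.4]{GLZ20}. Since $\mu = (\theta+dd^c\psi)^n$ with $\psi$ smooth and $d\mu\geq \varepsilon dV$, the density of $\mu$ is smooth and positive. The flow is then a genuine parabolic equation and the maximum principle is available. It suffices to prove $t\lvert\partial_t u\rvert \leq \kappa$ with $\kappa$ independent of the smoothing. The bound then passes to the limit, because the approximants converge and their time derivatives converge off a negligible set (Lemma \ref{Lemma 2.9}); at points where $\partial_t u$ exists, it is controlled by difference quotients of the limit.

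For the upper bound, set $H := t\,\partial_t u - \alpha(u - \beta\psi) - \gamma t$ for suitable constants $\alpha,\beta,\gamma$. Differentiating the equation $\log\frac{(\omega_t+dd^cu)^n}{\mu} = \partial_t u + F(t,x,u)$ in $t$ gives
\[
\partial_t\dot u = \operatorname{tr}_{\omega_{t,u}}(\dot\omega_t + dd^c\dot u) - \partial_t F - \partial_r F\,\dot u ,
\]
where $\omega_{t,u} := \omega_t+dd^cu$. Let $\Delta_{t}$ be the Laplacian of $\omega_{t,u}$. At an interior maximum of $H$ we have $\Delta_t H\le0\le\partial_tH$. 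The term $t\operatorname{tr}(\dot\omega_t)$ is controlled by $A\operatorname{tr}(\omega_t)$, using $\dot\omega_t\le \frac{A}{t}\omega_t$. The term $-\alpha\Delta_t u = -\alpha n + \alpha\operatorname{tr}(\omega_t)$ absorbs it once $\alpha\ge A$. The remaining trace term is $\alpha\beta\Delta_t\psi = \alpha\beta(\operatorname{tr}(\theta+dd^c\psi) - \operatorname{tr}\theta)$, and here $\omega_t\le B\theta$ lets $\operatorname{tr}\theta$ be absorbed. The Lipschitz and quasi-monotonicity bounds on $F$ on the compact range $[-M,M]$, where $M$ is the bound from Lemma \ref{uniform estimate}, control $\partial_tF$ and $\partial_rF\,\dot u$. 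This yields $t\dot u\le C$ at the maximum point, hence everywhere, with $C$ depending on $T$, $F$, $\|\psi\|_\infty$ and $\|u\|_\infty$. At $t=0$ we have $H=-\alpha(u_0-\beta\psi)$, which is bounded.

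For the lower bound, I would use the log-concavity of the determinant. Consider $G := t\,\partial_t u + \alpha(u-\beta\psi)+\gamma t$ and apply the minimum principle. A cleaner alternative is the semi-concavity/comparison trick of \cite{GLZ20}: compare $u$ with $v_s(t,x) := s^{-1}\bigl(u(st,x)+\ldots\bigr)$, the time-rescaled solutions, for $s$ close to $1$. Here $\dot\omega_t\ge -A\omega_t$ shows that these rescalings are sub- or supersolutions up to error terms of order $|s-1|$. The comparison principle for the smooth flow then gives $|u(st,x)-u(t,x)|\le C|s-1|$, and dividing by $s-1$ gives $|t\partial_tu|\le C$.

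The main obstacle is the lower bound near $t=0$, where only $\dot\omega_t\ge -A\omega_t$ is available. One also has to ensure that no constant depends on $\varepsilon$ or on higher norms of $\psi$; only $\|\psi\|_\infty$ may enter. I would handle this via the rescaling comparison argument, where $\psi$ enters only through the barrier $\beta\psi$ and through $\omega_t\le B\theta$.
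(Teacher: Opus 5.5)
Your proposal is correct and follows essentially the paper's route. The paper's proof only invokes the smooth a priori time-Lipschitz estimate of \cite[Theorem 2.5]{GLZ20} and carries it through the approximation scheme of \cite[Theorem 3.4]{GLZ20}; your maximum-principle or rescaling sketch, followed by the limit passage via the uniform bound $\lvert u_j(t,x)-u_j(s,x)\rvert\leq\kappa\lvert\log t-\log s\rvert$, fills in exactly that. Two small corrections: the term $\alpha\beta\,\mathrm{tr}_{\omega_t+dd^cu}\theta$ is absorbed using $\theta\leq\omega_t$, not $\omega_t\leq B\theta$; and the rescaling argument never needs supersolutions, since the $\psi$-mixed rescalings $\frac{\alpha_s}{s}u(st,\cdot)+c(1-\alpha_s)\psi-C\lvert s-1\rvert t$ (as in the construction of $W^s$ in Section 5) are subsolutions for $s$ on both sides of $1$, and comparing them with $u$ already gives $u(st,x)-u(t,x)\leq C\lvert s-1\rvert$, hence both bounds on $t\,\partial_tu$.
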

\begin{proof}
    It follows from the proof of \cite[Theorem 2.5]{GLZ20} and \cite[Theorem 3.4]{GLZ20}.
\end{proof}
\begin{lemma}\label{concavity1}
    Let $J \Subset (0, T)$ be an open interval. There exists a constant $C  > 0$ such that
    \begin{align*}
        t \mapsto u(t, x) + C \log t \text{ is concave in } J
    \end{align*}
    for all $x \in X$. The constant $C$ depends only on $A$, $T$, $F(t, x, r)$, $\lVert \psi\rVert_{L^{\infty}(X)}$, and $\lVert u\rVert_{L^{\infty}(X_T)}$. 
\end{lemma}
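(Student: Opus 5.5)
Following the pattern of Lemmas \ref{uniform estimate} and \ref{lipschitzness}, I will reduce to the smooth setting of \cite{GLZ20}, prove the concavity estimate there with constants depending only on the listed quantities, and then pass to the limit.

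\textbf{Smooth case.} Assume first that $F$, $u_0$ and $\omega_t$ are smooth. Then $u$ is the smooth solution constructed in \cite[Section 2]{GLZ20}, so we can differentiate the equation twice in $t$. Write $\dot u = \partial_t u$, $\ddot u = \partial_t^2 u$, and let $\Delta_t$ be the Laplacian of $\omega_t + dd^c u$. The equation reads $\dot u = \log\frac{(\omega_t+dd^cu)^n}{d\mu} - F(t,x,u)$. Since $\mu=(\theta+dd^c\psi)^n$ does not depend on $t$, differentiating once gives
\begin{align*}
\partial_t \dot u = \operatorname{tr}_{\omega_t+dd^cu}(\dot\omega_t + dd^c\dot u) - \partial_tF - \partial_rF\,\dot u .
\end{align*}
Differentiating again and using concavity of $\log\det$ (the term $-|\dot\omega_t+dd^c\dot u|^2$ has a good sign) gives
\begin{align*}
\partial_t \ddot u \le \Delta_t \ddot u + \operatorname{tr}(\ddot\omega_t) - \partial_r F\,\ddot u - (\text{second derivatives of }F)\cdot(1,\dot u)^{\otimes 2}.
\end{align*}
The hypothesis $\ddot\omega_t\le A\omega_t$ bounds $\operatorname{tr}(\ddot\omega_t)$ by $A\operatorname{tr}_{\omega_t+dd^cu}\omega_t$. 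Semi-convexity of $F$ in $(t,r)$ makes the last term bounded above by $C(1+\dot u^2)$, and Lemma \ref{lipschitzness} gives $\dot u^2\le \kappa^2/t^2$.

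\textbf{Maximum principle.} Consider $H = t^2\ddot u + \text{(correction)}$. The goal is $\ddot u \le C/t^2$ on $J$, which is exactly the concavity of $u + C\log t$, since $(C\log t)'' = -C/t^2$. The term $\operatorname{tr}_{\omega_t+dd^cu}\omega_t$ is unbounded and must be absorbed. I would use the function $G = t^2\ddot u - \beta t\dot u + \gamma u$ (with suitable constants, as in the proof of semi-concavity in \cite[Theorem 2.5/Lemma 2.6]{GLZ20}), and compute
\begin{align*}
\Delta_t u = n - \operatorname{tr}_{\omega_t+dd^cu}\omega_t .
\end{align*}
This produces a negative multiple of $\operatorname{tr}\omega_t$, which kills $A t^2\operatorname{tr}\omega_t$. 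The cross terms from $\dot\omega_t$ in $\Delta_t \dot u$ are controlled using $-A\omega_t\le\dot\omega_t\le \frac{A}{t}\omega_t$. At an interior maximum of $G$ on $[0,T']\times X$ we get $G\le C$, where $C$ depends on $A$, $T$, $F$, $\kappa$, $\|u\|_\infty$, and hence on $\|\psi\|_\infty$ through Lemmas \ref{uniform estimate} and \ref{lipschitzness}. At $t=0$ we have $G=\gamma u_0$, which is bounded. Therefore $t^2\ddot u\le C'$. Because $t$ is comparable to a constant on $J\Subset(0,T)$, this yields the claim.

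\textbf{Approximation.} For general data, approximate $F$, $u_0$ and $\omega_t$ as in \cite[Theorem 3.4]{GLZ20}. The constants above are uniform along the approximation, and concavity of $t\mapsto u_j(t,x)+C\log t$ passes to pointwise or $L^1$ limits. A concave function's $L^1_{loc}$ limit agrees a.e.\ with a concave function, and equality everywhere follows from continuity in $t$, since the solutions are locally uniformly Lipschitz.

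The main obstacle is the choice of auxiliary function in the maximum principle. The unbounded term $A\operatorname{tr}_{\omega_t+dd^cu}\omega_t$, coming from $\ddot\omega_t$, and the $\frac{A}{t}$ blow-up in $\dot\omega_t$ must both be absorbed. I would first try $G=t^2\ddot u-\beta t\dot u+\gamma u$ with $\gamma\gg\beta\gg A$, copying the structure of \cite[Lemma 2.6]{GLZ20}.
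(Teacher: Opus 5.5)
Your top-level plan (a bound $t^2\ddot u\le C$ for smooth data, then the approximation of \cite[Theorem 3.4]{GLZ20}) matches the paper, which simply invokes the proofs of \cite[Theorem 2.9]{GLZ20} and \cite[Theorem 3.4]{GLZ20}. The gap is in the maximum principle you sketch for the smooth case: as written it does not close. Write $\omega_u:=\omega_t+dd^cu$.

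First, there is a sign error. Since $\Delta_tu=n-\operatorname{tr}_{\omega_u}\omega_t$, one has $(\partial_t-\Delta_t)(\gamma u)=\gamma\dot u-\gamma n+\gamma\operatorname{tr}_{\omega_u}\omega_t$. So for $\gamma>0$ the term $+\gamma u$ in $G$ adds to $At^2\operatorname{tr}_{\omega_u}\omega_t$ instead of cancelling it; you need $-\gamma u$. The term $-\beta t\dot u$ does not help either: via $\dot\omega_t\ge -A\omega_t$ it contributes a further $+\beta At\operatorname{tr}_{\omega_u}\omega_t$.

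Second, and more seriously, you discard $-|\dot\omega_t+dd^c\dot u|^2_{\omega_u}$ and keep an inequality that is linear in $\ddot u$. Then $(\partial_t-\Delta_t)(t^2\ddot u)=2t\ddot u+t^2(\partial_t-\Delta_t)\ddot u$ contains $2t\ddot u$ with a positive coefficient. No correction term of your type can absorb it, because $(\partial_t-\Delta_t)(t\dot u)=\dot u+t(\operatorname{tr}_{\omega_u}\dot\omega_t-\partial_tF-\partial_rF\,\dot u)$ and $(\partial_t-\Delta_t)u$ contain no $\ddot u$. Even with the sign corrected, at a maximum of $G$ you only get $0\le(2t-t^2\partial_rF)\ddot u+O(1/t)$, which bounds $\ddot u$ from below, not from above. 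No choice of $\beta,\gamma$, and no other time weight increasing from $0$, repairs this: a linear differential inequality cannot give a bound independent of the uncontrolled behaviour of $\ddot u$ near $t=0$.

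The missing ingredient is the quantitative concavity of $\log\det$. With $\eta:=\dot\omega_t+dd^c\dot u$, the once-differentiated equation gives $\operatorname{tr}_{\omega_u}\eta=\ddot u+\partial_tF+\partial_rF\,\dot u$. Then $|\eta|^2_{\omega_u}\ge\frac1n(\operatorname{tr}_{\omega_u}\eta)^2$ yields
\begin{align*}
(\partial_t-\Delta_t)\ddot u\le A\operatorname{tr}_{\omega_u}\omega_t-\frac1n\left(\ddot u+\partial_tF+\partial_rF\,\dot u\right)^2-\partial_rF\,\ddot u+C(1+\dot u^2).
\end{align*}
Take $G=t^2\ddot u-\gamma u$ with $\gamma\ge AT^2$, so that the trace terms have nonpositive total coefficient. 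At a maximum point with $t>0$, set $y=t^2\ddot u$ and $b=\partial_tF+\partial_rF\,\dot u$, so that $|t^2b|\le Ct$ by $|\dot u|\le\kappa/t$. Multiplying by $t^2$ gives
\begin{align*}
0\le 2ty+Ct^2|y|-\frac1n\left(y+t^2b\right)^2+C,
\end{align*}
which forces $y\le C'$. If the maximum is at $t=0$, then $G=-\gamma u_0$. Either way $t^2\ddot u\le C'+2\gamma\lVert u\rVert_{L^{\infty}(X_T)}$, i.e.\ $t\mapsto u+C''\log t$ is concave. The constant $C''$ depends only on $A$, $T$, $F$, $\kappa$ and $\lVert u\rVert_{L^{\infty}(X_T)}$, hence only on the quantities in the statement. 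Your approximation step then goes through unchanged.
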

\begin{proof}
    It follows from the proof of \cite[Theorem 2.9]{GLZ20} and \cite[Theorem 3.4]{GLZ20}.
\end{proof}
\subsection{Parabolic comparison principle}
In this section, we prove lemmas which will be used to prove Theorem \ref{intro_main result 2}. They are mainly about computing and comparing some sublevel sets with respect to Monge-Amp\`ere measures. The first one generalizes Lemma \ref{CLN}. 
\begin{lemma}\label{borel measurable}
    Let $u \in \mathcal{P}(X_T, \omega_t) \cap L^{\infty}_{loc}(X_T)$ and $E \subset X_T$ be a Borel subset. The map
    \begin{align*}
        \Gamma_{E} : t \mapsto \int_{E_t}(\omega_t+dd^cu(t, \cdot))^n
    \end{align*}
    is Borel measurable in $(0, T)$, where $E_t := \{x \in X ~\mid~ (t, x) \in E\}$.
\end{lemma}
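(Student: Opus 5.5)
We prove the statement with a monotone class (Dynkin $\pi$–$\lambda$) argument, using Lemma \ref{CLN} as the base case. Let $\mathcal{D}$ be the family of Borel sets $E \subset X_T$ for which $\Gamma_E$ is Borel measurable on $(0,T)$. The steps are: show that $\mathcal{D}$ contains a $\pi$-system generating the Borel $\sigma$-algebra of $X_T$, and show that $\mathcal{D}$ is a $\lambda$-system. Dynkin's theorem then gives that $\mathcal{D}$ is the whole Borel $\sigma$-algebra.

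\textbf{Base case.} Let $U \subset X_T$ be open. Choose continuous test functions $0 \le \chi_k \le 1$ with compact support in $X_T$ such that $\chi_k \uparrow \mathbf{1}_U$ pointwise. Such a sequence exists because $U$ is an open subset of a metrizable manifold: take an exhaustion of $U$ by compacts together with Urysohn functions, and replace them by successive maxima so that the sequence is increasing. For each fixed $t$, the slice functions $\chi_k(t,\cdot)$ increase to $\mathbf{1}_{U_t}$. By monotone convergence for the positive measure $(\omega_t+dd^cu(t,\cdot))^n$,
\begin{align*}
    \Gamma_U(t) = \lim_{k\to\infty}\int_X \chi_k(t,\cdot)(\omega_t+dd^cu(t,\cdot))^n .
\end{align*}
Lemma \ref{CLN} says each function on the right-hand side is continuous in $t$. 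A pointwise limit of continuous functions is Borel measurable, so $U \in \mathcal{D}$. Open sets form a $\pi$-system generating the Borel sets.

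\textbf{$\lambda$-system properties.} First, $X_T \in \mathcal{D}$: here $\Gamma_{X_T}(t) = \int_X(\omega_t+dd^cu)^n = \int_X \omega_t^n$ is the total mass, which depends only on the cohomology class. This is continuous in $t$ since $\omega_t$ is a $C^2$ family, and it is bounded by $B^n\int_X\theta^n$. Second, if $E \subset F$ are both in $\mathcal{D}$, then $(F\setminus E)_t = F_t \setminus E_t$, so $\Gamma_{F\setminus E} = \Gamma_F - \Gamma_E$, where both terms are finite by the uniform bound. This difference is measurable. Third, if $E_j \uparrow E$ with each $E_j \in \mathcal{D}$, then the slices also increase, $(E_j)_t \uparrow E_t$. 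By continuity of measure from below, $\Gamma_{E_j}(t) \to \Gamma_E(t)$ pointwise, so $\Gamma_E$ is measurable. One also checks that each slice $E_t$ is Borel in $X$, since slicing preserves Borel sets; this makes $\Gamma_E(t)$ well defined.

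The main obstacle is the base case, specifically the construction of the increasing continuous approximation of $\mathbf{1}_U$ by compactly supported test functions. This is exactly what allows Lemma \ref{CLN} to be applied; after that, the Dynkin argument is routine.
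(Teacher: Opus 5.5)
Your proof is correct. It is essentially the argument the paper relies on: the paper only defers to the local case \cite[Lemma 2.2]{Ka25b}, and a Dynkin/monotone class argument seeded by the continuity in Lemma \ref{CLN} is what that transfer to compact $X$ amounts to, with finiteness of the slice masses ($\int_X\omega_t^n\le B^n\int_X\theta^n$) making your difference step unproblematic and $X_T\in\mathcal{D}$ already covered by the open-set case.
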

\begin{proof}
    The proof is same with the local case in  \cite[Lemma 2.2]{Ka25b}.
\end{proof}
We also need the following lemma which will be several times.
\begin{lemma}\label{fubini type lemma}
    Let $u \in \mathcal{P}(X_T, \omega_t) \cap L^{\infty}_{loc}(X_T)$. For any Borel subset $E \subset X_T$, we have
    \begin{align*}
        \int_{E}dt \wedge (\omega_t+dd^cu)^n = \int_0^T dt \int_{E_t}(\omega_t+dd^cu(t, \cdot))^n,
    \end{align*}
    where $E_t := \{x \in X ~\mid~ (t, x) \in E\}$.
\end{lemma}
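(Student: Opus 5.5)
The plan is a monotone class argument. Let $\mathcal{D}$ be the family of Borel sets $E \subset X_T$ for which the identity
\begin{align*}
    \int_{E}dt \wedge (\omega_t+dd^cu)^n = \int_0^T dt \int_{E_t}(\omega_t+dd^cu(t, \cdot))^n
\end{align*}
holds. We show that $\mathcal{D}$ contains a $\pi$-system generating the Borel $\sigma$-algebra of $X_T$ and that it is a Dynkin system; Dynkin's $\pi$-$\lambda$ theorem then gives the lemma. Two things make the right-hand side meaningful. First, Lemma \ref{borel measurable} says $t \mapsto \int_{E_t}(\omega_t+dd^cu(t,\cdot))^n$ is Borel. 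Second, the bound $(\omega_t+dd^cu(t,\cdot))^n(X) = \int_X\omega_t^n \le B^n\int_X\theta^n$ (cohomological invariance together with $\omega_t \le B\theta$, as in Lemma \ref{CLN}) shows this function is bounded. So the right-hand side is a finite-valued countably additive set function of $E$.

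First step: compact cylinders. Take $E = [a,b]\times K$ with $[a,b] \subset (0,T)$ and $K \subset X$ compact. Choose continuous compactly supported $\chi_k \downarrow \mathbf 1_E$ of product form $\eta_k(t)g_k(x)$, with $\eta_k \downarrow \mathbf 1_{[a,b]}$ and $g_k \downarrow \mathbf 1_K$. By the definition of the parabolic Monge-Amp\`ere operator,
\begin{align*}
    \int \chi_k\, dt\wedge(\omega_t+dd^cu)^n=\int_0^T\eta_k(t)\Big(\int_X g_k(\omega_t+dd^cu)^n\Big)dt .
\end{align*}
Let $k\to\infty$. On the left, monotone convergence applies for the Radon measure. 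On the right, for each $t$ the inner integrals decrease to $\int_K(\omega_t+dd^cu)^n$, and the uniform bound above allows dominated convergence in $t$. Since $E_t$ equals $K$ for $t\in[a,b]$ and is empty otherwise, this gives the identity for $E$. Finite intersections of such cylinders are again cylinders of the same type. Moreover these cylinders generate the Borel $\sigma$-algebra of $X_T$, since open sets of the product are countable unions of them ($X$ is second countable).

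Second step: the Dynkin property. Work first on $X_{[\varepsilon,T-\varepsilon]} := [\varepsilon,T-\varepsilon]\times X$, where both sides are finite measures; this set is itself a cylinder, so it lies in $\mathcal{D}$. Proper differences $F\setminus E$ with $E\subset F$ are handled by subtracting the two (finite) identities, using $(F\setminus E)_t = F_t\setminus E_t$. Increasing unions follow from monotone convergence on both sides, since $(\bigcup E_j)_t=\bigcup (E_j)_t$. The $\pi$-$\lambda$ theorem then yields the identity for every Borel $E \subset X_{[\varepsilon,T-\varepsilon]}$. For a general Borel $E\subset X_T$, write $E=\bigcup_\varepsilon E\cap X_{[\varepsilon,T-\varepsilon]}$ and let $\varepsilon\downarrow0$, using monotone convergence on both sides. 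Both sides may be infinite, but they agree.

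The only delicate point is the first step, where we pass from continuous test functions to indicators. This is why we need product-form test functions together with the uniform mass bound in $t$ to justify dominated convergence. The measurability of the slice function is already supplied by Lemma \ref{borel measurable}, so no further obstacle is expected.
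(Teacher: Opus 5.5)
Your proof is correct and is essentially the standard argument the paper defers to (the paper only cites the local case). You check the identity on a generating $\pi$-system by approximating indicators with compactly supported test functions and passing to the limit, then extend to all Borel sets by Dynkin's $\pi$-$\lambda$ theorem, with Lemma~\ref{borel measurable} guaranteeing that the right-hand side makes sense.

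One minor simplification: every slice has total mass $\int_X\omega_t^n\le B^n\int_X\theta^n$ and $T<\infty$, so both sides are finite measures on all of $X_T$, bounded by $TB^n\int_X\theta^n$. The localization to $[\varepsilon,T-\varepsilon]\times X$, and the caveat that both sides may be infinite, are therefore unnecessary.
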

\begin{proof}
    The proof is same with the local case in \cite[Lemma 2.3]{Ka25b}.
\end{proof}
Now it is ready to prove a comparison principle and a domination principle with respect to a parabolic Monge-Amp\`ere operator. They are counterparts of the elliptic Monge-Amp\`ere operator (see e.g. \cite[Proposition 10.6, Proposition 10.11]{GZ17}).
\begin{lemma}
    Assume that $u, v \in \mathcal{P}(X_T, \omega_t) \cap L^{\infty}(X_T)$. Then
    \begin{align*}
        \int_{\{v < u\}}dt \wedge (\omega_t+dd^cu)^n \leq \int_{\{v < u\}}dt \wedge (\omega_t+dd^cv)^n.
    \end{align*}
\end{lemma}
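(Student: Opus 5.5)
The plan is to reduce the parabolic statement to the elliptic comparison principle slice by slice, and then integrate in time with the Fubini-type Lemma \ref{fubini type lemma}.

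Let $E := \{(t,x) \in X_T ~\mid~ v(t,x) < u(t,x)\}$. This is a Borel subset of $X_T$. Indeed, $u$ and $v$ are Borel: they are upper semicontinuous in $x$ and locally Lipschitz in $t$, so each is a Carath\'eodory-type function and hence jointly Borel. Applying Lemma \ref{fubini type lemma} to $u$ and to $v$ with this $E$ gives
\begin{align*}
    \int_{E}dt \wedge (\omega_t+dd^cu)^n = \int_0^T dt \int_{E_t}(\omega_t+dd^cu(t, \cdot))^n, \qquad \int_{E}dt \wedge (\omega_t+dd^cv)^n = \int_0^T dt \int_{E_t}(\omega_t+dd^cv(t, \cdot))^n,
\end{align*}
where $E_t = \{v(t,\cdot) < u(t,\cdot)\}$. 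By Lemma \ref{borel measurable}, both integrands in $t$ are Borel measurable, and by Lemma \ref{CLN} they are bounded by $B^n\int_X\theta^n$. So both time integrals make sense and are finite.

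It therefore suffices to show, for each fixed $t \in (0,T)$,
\begin{align*}
    \int_{\{v(t,\cdot) < u(t,\cdot)\}}(\omega_t+dd^cu(t,\cdot))^n \leq \int_{\{v(t,\cdot) < u(t,\cdot)\}}(\omega_t+dd^cv(t,\cdot))^n .
\end{align*}
Here $u(t,\cdot), v(t,\cdot)$ are bounded $\omega_t$-psh functions, and $\omega_t$ is a smooth closed semipositive form with big class, since $\theta \leq \omega_t$ and $\{\theta\}$ is big. This is exactly the elliptic comparison principle for bounded quasi-psh functions with respect to a semipositive big form, as in \cite[Proposition 10.6]{GZ17}. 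Integrating this inequality over $t \in (0,T)$ and using the two identities above yields the lemma.

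The main obstacle is justifying the elliptic comparison principle for the merely semipositive form $\omega_t$, since the standard statements are often written for K\"ahler forms. If needed, I would handle this by approximation. Replace $\omega_t$ by $\omega_t + \varepsilon\omega_X$, which is K\"ahler, and apply the K\"ahler comparison principle to $u(t,\cdot)$ and $v(t,\cdot)$ with respect to this form. Then let $\varepsilon \to 0$, expanding $(\omega_t+\varepsilon\omega_X+dd^cw)^n$ multilinearly: every extra term carries a factor $\varepsilon^k$ and has total mass bounded uniformly in $\varepsilon$. Restricting to the open-in-plurifine-topology set $\{v<u\}$ is harmless, because the elliptic argument handles it through the maximum-principle identity on $\{v<u\}$. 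Alternatively, one can cite directly the version for semipositive big classes in \cite{GZ17}, which is valid for bounded potentials.
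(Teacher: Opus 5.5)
Your proposal is correct and follows essentially the same route as the paper: the elliptic comparison principle on each slice, Borel measurability in $t$ via Lemma \ref{borel measurable}, integration in $t$, and Lemma \ref{fubini type lemma} to recover the parabolic measures. Your additional justifications are valid and fill in points the paper leaves implicit when it simply cites \cite[Proposition 9.2]{GZ17}. These are the joint Borel measurability of $\{v<u\}$, and the $\omega_t+\varepsilon\omega_X$ approximation for the merely semipositive $\omega_t$, which works by positivity of the $O(\varepsilon^k)$ mixed terms.
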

\begin{proof}
    It follows from the classical comparison principle (see e.g. \cite[Proposition 9.2]{GZ17}) that for all $t \in (0, T)$, 
    \begin{align}\label{2.3}
        \int_{\{v(t, \cdot) < u(t, \cdot)\}}(\omega_t+dd^cu(t, \cdot))^n \leq \int_{\{v(t, \cdot) < u(t, \cdot)\}}(\omega_t+dd^cv(t, \cdot))^n.
    \end{align}
    It follows from Lemma \ref{borel measurable} that each side of (\ref{2.3}) is a Borel measurable function of $t \in (0, T)$. By integrating each side with respect to $dt$, we get
    \begin{align*}
        \int_{0}^T dt \int_{\{v(t, \cdot) < u(t, \cdot)\}}(\omega_t+dd^cu(t, \cdot))^n \leq \int_{0}^T dt \int_{\{v(t, \cdot) < u(t, \cdot)\}}(\omega_t+dd^cv(t, \cdot))^n.
    \end{align*}
    By using Lemma \ref{fubini type lemma}, we get the conclusion.
\end{proof}
\begin{lemma}\label{domination principle}
    Assume that $u, v \in \mathcal{P}(X_T, \omega_t) \cap L^{\infty}(X_T)$ satisfy 
    \begin{align*}
        \int_{\{v < u\}}dt \wedge (\omega_t+dd^cv)^n = 0.
    \end{align*}
    Then $u \leq v$ in $X_T$.
\end{lemma}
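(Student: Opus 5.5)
Using Lemma \ref{fubini type lemma}, I will reduce the statement to the elliptic domination principle applied slice by slice. Applied to the Borel set $E = \{v < u\} \subset X_T$, the hypothesis reads
\begin{align*}
    \int_0^T dt \int_{\{v(t, \cdot) < u(t, \cdot)\}}(\omega_t+dd^cv(t, \cdot))^n = 0.
\end{align*}
The inner integral is a nonnegative Borel function of $t$ by Lemma \ref{borel measurable}. Hence there is a set $N \subset (0, T)$ of Lebesgue measure zero such that for every $t \in (0, T) \setminus N$,
\begin{align*}
    \int_{\{v(t, \cdot) < u(t, \cdot)\}}(\omega_t+dd^cv(t, \cdot))^n = 0.
\end{align*}

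For each such $t$, $u(t, \cdot)$ and $v(t, \cdot)$ are bounded $\omega_t$-psh functions, where $\omega_t$ is a closed semipositive form with $\theta \leq \omega_t \leq B\theta$ and $\{\theta\}$ big, so $\{\omega_t\}$ is big as well. The elliptic domination principle for bounded potentials in a semipositive big class (see e.g. \cite[Proposition 10.11]{GZ17}, or its version for big classes) then gives $u(t, \cdot) \leq v(t, \cdot)$ on $X$. I expect this to be the main point to check: the elliptic domination principle must be available for merely semipositive big $\omega_t$ rather than Kähler forms. The standard proof runs the comparison principle \eqref{2.3} with $u + \varepsilon(\rho - u)$ for some $\rho \in PSH(X, \omega_t)$ with $(\omega_t+dd^c\rho)^n$ not charging pluripolar sets. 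One can take $\rho$ to be the bounded $\theta$-psh solution of $(\theta+dd^c\rho)^n = c\,dV$, since a $\theta$-psh function is $\omega_t$-psh, and $(\omega_t + dd^c \rho)^n \geq (\theta+dd^c\rho)^n \geq c\,dV$. If needed, I will reproduce this argument directly.

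Finally, I extend the inequality from $t \notin N$ to all $t$ using the time regularity. Since $(0, T) \setminus N$ is dense and $u, v$ are locally uniformly Lipschitz in $t$, for any $(t, x) \in X_T$ I pick $t_k \notin N$ with $t_k \to t$. Then $u(t, x) = \lim_k u(t_k, x) \leq \lim_k v(t_k, x) = v(t, x)$. This proves $u \leq v$ everywhere in $X_T$.
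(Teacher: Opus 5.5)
Your proposal is correct and follows the same route as the paper. You slice $\{v<u\}$ via the Fubini-type lemma to get $\int_{\{v(t,\cdot)<u(t,\cdot)\}}(\omega_t+dd^cv(t,\cdot))^n=0$ for a.e.\ $t$, apply the elliptic domination principle to get $u(t,\cdot)\le v(t,\cdot)$ for those $t$, and pass to all $t$ by continuity in time. Your extra remarks only make explicit what the paper leaves implicit: the Borel measurability of the slice integrals, the domination principle for the semipositive big forms $\omega_t$ via a bounded $\theta$-psh $\rho$ with $(\theta+dd^c\rho)^n=c\,dV$ normalized so that $\rho\le u(t,\cdot)$, and the density argument in $t$.
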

\begin{proof}
    Let us denote $E := \{v < u\}$. It follows from Lemma \ref{fubini type lemma} that
    \begin{align*}
        \int_{E}dt \wedge (\omega_t+dd^cv)^n = \int_0^T dt \int_{E_t}(\omega_t+dd^cv)^n = 0,
    \end{align*}
    where $E_t := \{x \in X ~\mid~ (t, x ) \in E\}$. Therefore we have
    \begin{align*}
        \int_{E_t}(\omega_t+dd^cv(t, \cdot))^n = 0
    \end{align*}
    for a.e. $t \in (0, T)$. It follows from the classical domination principle (see e.g. \cite[Proposition 10.11]{GZ17}) that 
    \begin{align*}
        u(t, \cdot) \leq v(t, \cdot)
    \end{align*}
    holds in $X$ for a.e. $t \in (0, T)$, which implies that $u \leq v$ everywhere in $X_T$.
\end{proof}
\section{Convergence Lemmas}
Let $\{u_j\}_{j = 1}^{\infty}$ be a sequence of uniformly bounded parabolic potentials and let $\{\mu_j\}_{j = 1}^{\infty}$ be a sequence of positive Borel measures in $X$. Assume that $d\mu_j \rightarrow d\mu$ weakly as $j \rightarrow \infty$. In this section, we find a sufficient condition for the weak convergence of
\begin{align*}
    e^{\partial_tu_j+F(t, x, u_j)}dt \wedge d\mu_j \rightarrow e^{\partial_tu+F(t, x, u)}dt \wedge d\mu \text{ as } j \rightarrow \infty.
\end{align*}
The result below is analogous to \cite[Lemma 3.3]{Ka25}.
\begin{lemma}\label{integration by parts}
    Let $\psi \in PSH(X, \theta) \cap L^{\infty}(X)$ and $u \in \mathcal{P}(X_T, \omega_t) \cap L^{\infty}(X_T)$. Then for every smooth test function $\chi(t, x)$ on $X_T$, 
    \begin{align*}
        \int_{X_T}\chi \partial_tudt \wedge (\theta + dd^c\psi)^n = -\int_{X_T}u \partial_t\chi dt \wedge (\theta + dd^c\psi)^n.
    \end{align*}
\end{lemma}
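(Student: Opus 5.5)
The plan is to reduce the identity to a one-dimensional integration by parts in $t$ for each fixed $x$, and then apply Fubini with respect to the product measure $dt \wedge (\theta+dd^c\psi)^n$. The key observation is that the measure $(\theta+dd^c\psi)^n$ on $X$ does not depend on $t$. Write $\nu := (\theta+dd^c\psi)^n$, a positive Radon measure on $X$ of finite mass $\int_X\theta^n$. Then $dt\wedge\nu$ is exactly the product measure $d\ell\otimes\nu$ on $(0,T)\times X$; this is immediate from the definition of the parabolic Monge--Amp\`ere operator with the constant family $\theta$ and the constant potential $\psi$, or alternatively from Lemma \ref{fubini type lemma}.

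Fix a smooth test function $\chi$ with compact support in $[a,b]\times X$, where $0<a<b<T$. For each $x\in X$, the function $t\mapsto u(t,x)$ is Lipschitz on $[a,b]$ with a Lipschitz constant $L$ independent of $x$, by the local uniform Lipschitz property of parabolic potentials. Hence it is absolutely continuous, and $\partial_t u(t,x)$ exists for $\ell$-a.e. $t$ and satisfies $|\partial_tu|\le L$ wherever it exists. The classical one-variable integration by parts, with vanishing boundary terms since $\chi(a,x)=\chi(b,x)=0$, gives for every $x$
\begin{align*}
\int_a^b \chi(t,x)\,\partial_tu(t,x)\,dt = -\int_a^b u(t,x)\,\partial_t\chi(t,x)\,dt .
\end{align*}

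Next we integrate this identity against $d\nu(x)$ and apply Fubini--Tonelli. To justify this we need several facts. First, $\partial_tu$ must be defined $d\ell\otimes\nu$-a.e. This follows from Lemma \ref{Lemma 2.7} applied with the bounded function $\varphi=(\psi-\inf\psi)/c-1$, suitably normalized so that $-1\le\varphi\le0$; this changes $\nu$ only by the factor $c^{-n}$ after using $\theta+dd^c\varphi\ge c^{-1}(\theta+dd^c\psi)$ when $c\ge1$. More simply, the set $E$ where the $t$-derivative fails to exist is Borel, and each $x$-slice of $E$ is $\ell$-null, so $E$ is $d\ell\otimes\nu$-null by Tonelli. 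Second, the function $\partial_tu$, defined as a limsup of difference quotients, is Borel. Third, the integrands are bounded by $\sup|\chi|\,L$ and $\sup|u|\,\sup|\partial_t\chi|$ respectively, and they are supported in $[a,b]\times X$, which has finite $d\ell\otimes\nu$ measure. With these facts, integrating the pointwise identity in $x$ and exchanging the order of integration yields the claim.

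The main technical point is the measurability and null-set issue: we must ensure that $\partial_t u$ is a well-defined Borel function $d\ell\otimes\nu$-a.e., so that the left-hand side makes sense and Fubini applies. We would handle this by the slice argument above, or by invoking Lemma \ref{Lemma 2.7}, together with the identification of $dt\wedge\nu$ as a product measure.
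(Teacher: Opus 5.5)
Your proof is correct. It uses the same ingredients as the paper, namely Fubini for the product measure $dt\wedge(\theta+dd^c\psi)^n$ and one-variable calculus for Lipschitz functions of $t$, but applies them in the opposite order.

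The paper integrates over $X$ first. It sets $S(t)=\int_X\chi(t,\cdot)u(t,\cdot)(\theta+dd^c\psi)^n$ and shows $S$ is Lipschitz, using the uniform Lipschitz bound on $u$ and the finite total mass. It then uses $\int_0^TS'(t)\,dt=0$ together with $S'(t)=\int_X\partial_t(\chi u)(t,\cdot)(\theta+dd^c\psi)^n$ for a.e.\ $t$. That last identity is a differentiation under the integral sign which the paper leaves implicit. It requires dominated convergence and the fact that, for a.e.\ $t$, the derivative $\partial_tu(t,\cdot)$ exists $(\theta+dd^c\psi)^n$-a.e. This is exactly the null-set statement you obtain by Tonelli on $x$-slices.

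You instead integrate by parts in $t$ for each fixed $x$, where absolute continuity of $t\mapsto u(t,x)$ makes this immediate, and only then integrate in $x$. This avoids any interchange of derivative and integral and makes the product structure and the null-set issue explicit. Your version is therefore somewhat more self-contained, while the paper's mirrors the local-case argument and isolates the Lipschitz function $S$.

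One assertion you should still justify is that $u$, and hence $E$ and $\partial_tu$, is Borel on $X_T$. This holds because $u$ is jointly upper semicontinuous, being upper semicontinuous in $x$ and locally uniformly Lipschitz in $t$. By continuity in $t$, the limsup and liminf of the difference quotients may then be taken over rational increments.
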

\begin{proof}
    The proof is same with \cite[Lemma 3.1]{Ka25}. We provide the details for the reader's convenience. Fix $\chi(t, x)$ a smooth test function on $X_T$. Let
    \begin{align*}
        S(t) := \int_{X}\chi(t, \cdot)u(t, \cdot)(\theta+dd^c\psi)^n.
    \end{align*}
    Assume that $supp(\chi) \subset J \times K \Subset X_T$, where $J$ and $K$ are compact subsets of $(0, T)$ and $X$ respectively. Since $u \in \mathcal{P}(X_T, \omega_t)$, there exists a constant $\kappa > 0$ such that
    \begin{align*}
        u(t, x) \leq u(s, x) + \kappa \lvert t-s\rvert
    \end{align*}
    for all $t, s \in J$ and $x \in X$. Thus for every $t_0, t_1 \in J$,
    \begin{align*}
        \lvert S(t_0) - S(t_1) \rvert &= \left\vert \int_K (\chi(t_0, \cdot)u(t_0, \cdot) - \chi(t_1, \cdot)u(t_1, \cdot))(\theta+dd^c\psi)^n\right\vert \\
        &\leq \int_K \lvert \chi(t_0, \cdot) - \chi(t_1, \cdot)\rvert \lvert u(t_0, \cdot)\rvert (\theta + dd^c\psi)^n \\
        &\quad + \int_K \lvert \chi(t_1, \cdot)\rvert \lvert u(t_0, \cdot) - u(t_1, \cdot)\rvert (\theta+dd^c\psi)^n \\
        &\leq C\lvert t-t_0\rvert
    \end{align*}
    where 
    \begin{align*}
        C = \left(\int_K (\theta+dd^c\psi)^n\right)\left(\left\Vert \partial_t\chi \right\Vert_{L^{\infty}(J \times K)}\left\Vert u\right\Vert_{L^{\infty}(X_T)}+\lVert \chi\rVert_{L^{\infty}(X_T)}\kappa \right) > 0.
    \end{align*}
    This implies that $S$ is Lipschitz. Therefore,
    \begin{align*}
        \int_{X_T}\chi \partial_tudt \wedge (\theta + dd^c\psi)^n &= \int_0^T dt \int_{X}\chi(t, \cdot)\partial_tu(t, \cdot)(\theta+dd^c\psi)^n \\
        &=\int_0^T S'(t)dt - \int_{0}^Tdt\int_{X}\partial_t\chi(t, \cdot)u(t, \cdot)(\theta+dd^c\psi)^n \\
        &= -\int_{X_T}u\partial_t\chi dt \wedge (\theta+dd^c\psi)^n.
    \end{align*}
    Here we used the integration by parts for the second equality. The third equality holds by the Lipschitzness of $S$.
\end{proof}
The solutions of the flow with degenerate data is often obtained from approximating sequences. Therefore, the convergence of the each side of the equation of the flow is crucial. The following technical lemma provides a sufficient condition for the convergence of the right-hand side, and it plays an essential role in the proof of Theorem \ref{intro_main result 1}. This is again an improvement of \cite[Theorem 1.14]{GLZ20} and analogous to \cite[Lemma 3.3]{Ka25} in local setting. Its statement is now for compact setting, so we provide a detailed argument for the sake of completeness.
\begin{lemma}\label{convergence}
    Assume that $\psi_j, \psi \in PSH(X, \theta) \cap L^{\infty}(X)$ satisfy
    \begin{itemize}
        \item [(a)] $(\theta+dd^c\psi_j)^n \rightarrow (\theta + dd^c\psi)^n$ weakly as $j \rightarrow \infty$, 
        \item [(b)] $\{\psi_j\}_{j = 1}^{\infty}$ is uniformly bounded.
    \end{itemize}
    Assume that $u_j, u \in \mathcal{P}(X_T, \omega_t) \cap L^{\infty}(X_T)$ satisfy
    \begin{itemize}
        \item [(c)] for almost every $t \in (0, T)$, $u_j(t, \cdot) \rightarrow u(t, \cdot)$ in $cap_{\omega_t}$ as $j \rightarrow \infty$,
        \item [(d)] $\{u_j\}_{j = 1}^{\infty}$ is locally uniformly bounded,
        \item [(e)] there exists a constant $\kappa_0$ satisfying
        \begin{align*}
             \left\vert \partial_tu_j(t, x) \right\vert \leq \frac{\kappa_0}{t} \text{ for all } j
        \end{align*}
        for all $(t, x) \in X_T$ where $\partial_tu_j$ is well-defined,
        \item [(f)] for each open interval $J \Subset (0, T)$, there exists a constant $C_0 = C_0(J) > 0$ satisfying
        \begin{align*}
            t \mapsto u_j(t, x) + C_0\log t \text{ is concave in } J 
        \end{align*}
        for all $x \in X$ and $j$.
    \end{itemize}
    Then
    \begin{align*}
        e^{\partial_tu_j+F(t, x, u_j)}dt \wedge (\theta+dd^c\psi_j)^n \rightarrow e^{\partial_tu+F(t, x ,u)}dt \wedge (\theta+dd^c\psi)^n \text{ weakly }
    \end{align*}
    as $j \rightarrow \infty$.
\end{lemma}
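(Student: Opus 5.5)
Fix a test function $\chi$ with $\mathrm{supp}\,\chi\subset J\times X$, where $J\Subset(0,T)$ is an open interval; we must show $\int\chi e^{\partial_tu_j+F(t,x,u_j)}dt\wedge\mu_j\to\int\chi e^{\partial_tu+F(t,x,u)}dt\wedge\mu$, writing $\mu_j:=(\theta+dd^c\psi_j)^n$ and $\mu:=(\theta+dd^c\psi)^n$. By (e), on $J\times X$ the exponents $\partial_tu_j$ are uniformly bounded by $\kappa_0/\inf J$. By (d) and continuity of $F$, the terms $F(t,x,u_j)$ are also uniformly bounded there. Hence the densities $g_j:=e^{\partial_tu_j+F(t,x,u_j)}$ are uniformly bounded on $\mathrm{supp}\,\chi$. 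The plan is to split
\begin{align*}
\int\chi g_j\,dt\wedge\mu_j-\int\chi g\,dt\wedge\mu=\int\chi (g_j-g)\,dt\wedge\mu_j+\Big(\int\chi g\,dt\wedge\mu_j-\int\chi g\,dt\wedge\mu\Big),
\end{align*}
but $g$ is not continuous, so neither term is directly controlled. Instead I will replace the time derivative by difference quotients, using the concavity (f).

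Fix small $s>0$. By (f), $u_j(t,x)+C_0\log t$ is concave in $t$, so its difference quotients are monotone in $s$. This gives the one-sided bounds
\begin{align*}
\frac{u_j(t+s,x)-u_j(t,x)}{s}-\varepsilon(s)\le\partial_tu_j(t,x)\le\frac{u_j(t,x)-u_j(t-s,x)}{s}+\varepsilon(s),
\end{align*}
with $\varepsilon(s)=O(s)$ uniform in $j$ and $x$ on $J$. The analogous bounds hold for $u$, since concavity passes to the limit (a.e. in $t$ via (c), and then everywhere by Lipschitz continuity in $t$). So it suffices to prove convergence for the densities in which $\partial_t u_j$ is replaced by the difference quotients $D^\pm_su_j$, with the error controlled uniformly as $s\to0$. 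The remaining step is to show $\int\partial_tu\,\chi\,dt\wedge\mu$ lies between the limits of the upper and lower quotient versions; for this I use Lemma~\ref{integration by parts}, which identifies $\int\chi\partial_tu\,dt\wedge\mu$ with $\lim_{s\to0}\int\chi D^\pm_su\,dt\wedge\mu$.

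Since $e^{(\cdot)}$ is monotone, a squeeze argument reduces the claim to showing, for fixed $s$ and continuous bounded $h$ (with $e^{F}$ handled the same way),
\begin{align*}
\int\chi\, h(t,x,u_j(t\pm s,x),u_j(t,x))\,dt\wedge\mu_j\;\to\;\int\chi\, h(t,x,u(t\pm s,x),u(t,x))\,dt\wedge\mu.
\end{align*}
\textbf{Main obstacle.} The key step is the slice-wise convergence $\int_X \chi\,\Phi(u_j(t,\cdot))\,\mu_j\to\int_X\chi\,\Phi(u(t,\cdot))\,\mu$ for a.e.\ $t$, where $\Phi$ is continuous. The measure is moving, and $u_j(t,\cdot)\to u(t,\cdot)$ only in capacity. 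I would use that $\mu_j$ are Monge–Ampère measures of uniformly bounded $\theta$-psh functions (b), so that $\mu_j(E)\le C\,cap_\theta(E)$ uniformly. Convergence in $cap_{\omega_t}$ implies convergence in $cap_\theta$ because $\theta\le\omega_t$. Hence $\int|\Phi(u_j)-\Phi(u)|\,\mu_j\to0$ by uniform boundedness. Then $\int\chi\Phi(u)\,\mu_j\to\int\chi\Phi(u)\,\mu$ follows from quasi-continuity of $u(t,\cdot)$: it is continuous off an open set of small $cap_\theta$, which has small $\mu_j$- and $\mu$-mass uniformly, combined with (a). Finally, dominated convergence in $t$ integrates the slice-wise statement, using Lemma~\ref{fubini type lemma} and the uniform bounds. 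Letting $s\to0$ closes the squeeze.
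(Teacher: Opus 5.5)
Your proof is correct, but it is organized differently from the paper's. The paper writes the difference as $\int\chi(g_j-g)\,dt\wedge\mu_j+\int\chi g\,dt\wedge(\mu_j-\mu)$ and estimates both pieces by a triangle inequality with difference quotients. For the first piece it needs a bound on $\int\chi\lvert\partial_tu_j-D_\delta u_j\rvert\,dt\wedge\mu_j$ that is uniform in $j$. It gets this by using concavity to remove the absolute value and then integrating by parts (Lemma~\ref{integration by parts}) against each moving measure $\mu_j$. It also invokes Lemma~\ref{Lemma 2.9} so that $\partial_tu_j\to\partial_tu$ off a negligible set.

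Your squeeze gets uniformity in $j$ for free. The concavity bounds $D^+_su_j-\varepsilon(s)\le\partial_tu_j\le D^-_su_j+\varepsilon(s)$ hold pointwise for every $j$. Monotonicity of $\exp$ then turns them into bounds on $\liminf_j$ and $\limsup_j$ that involve only difference quotients at fixed $s$. This step needs $\chi\ge0$, which you should state; it suffices for positive measures. Those fixed-$s$ terms are handled exactly as in the paper's $I_3$, $I_4$, $I_7$ estimates: the uniform domination $\mu_j\le C\,cap_\theta$ from (b), the inequality $cap_\theta\le cap_{\omega_t}$, and quasi-continuity.

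As a result, you never need Lemma~\ref{Lemma 2.9} or integration by parts against $\mu_j$, and the limit $s\to0$ is taken only for the single pair $(u,\mu)$. This also avoids the constant $C_3$ in the paper's $I_5$ bound. As written, $C_3$ contains $e^{2\lVert u\rVert_{L^\infty(X_T)}/\delta}$ and has to be replaced by the Lipschitz bound on $D_\delta u$. What the paper's route gives in exchange is a quantitative estimate, $O(\delta)$ uniformly in $j$, for $\int\chi\lvert\partial_tu_j-D_\delta u_j\rvert\,dt\wedge\mu_j$.

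One step should be stated more carefully. The limit $s\to0$ concerns $\int\chi e^{D^\pm_su+F(t,x,u)}\,dt\wedge\mu$, not the linear quantity $\int\chi D^\pm_su\,dt\wedge\mu$. It follows directly by dominated convergence. On $\mathrm{supp}\,\chi$, $D^\pm_su$ is bounded by the local Lipschitz constant of $u$. It converges to $\partial_tu$ wherever the latter exists, and since $dt\wedge\mu$ is the product measure $d\ell\otimes\mu$, that set is full by Fubini. Alternatively, your appeal to Lemma~\ref{integration by parts} also works, with one extra observation. The functions $\partial_tu-D^+_su+\varepsilon(s)$ and $D^-_su+\varepsilon(s)-\partial_tu$ are nonnegative, so the convergence of integrals that the lemma provides is in fact convergence in $L^1(\chi\,dt\wedge\mu)$. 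Since $\exp$ is Lipschitz on the relevant bounded range, this gives the convergence of the exponential integrals.
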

\begin{proof}
    Fix $\chi(t, x)$ a positive test function in $X_T$ and assume that $supp(\chi) \Subset J \times K \Subset X_T$. Let us denote 
    \begin{align*}
        M_F := \lVert F(t, x, u_j(t, x))\rVert_{L^{\infty}(J \times K)} > 0.
    \end{align*} 
    By Lemma \ref{Lemma 2.7}, there exists a sequence $\{E_j\}_{j = 1}^{\infty}$ of Borel subsets of $X_T$ such that for each $j$, $\partial_tu_j(t, x)$ exists for all $(t, x) \in X_T \setminus E_j$ and
    \begin{align*}
        \int_{E_j \cap ((0, T) \times K)}dt \wedge (\theta+dd^c\varphi)^n = 0
    \end{align*}
    for all $\varphi \in PSH(X, \theta)$ satisfying $-1 \leq \varphi \leq 0$. Let us define $E = \bigcup_{j = 1}^{\infty}E_j$. Then $E \subset X_T$ is a Borel subset such that for all $j$,
    \begin{align}\label{E}
        \int_{E \cap ((0, T) \times K)}dt \wedge (\theta + dd^c\psi_j)^n = \int_{E \cap ((0, T) \times K)}dt \wedge (\theta + dd^c\psi)^n = 0
    \end{align}
    and $\partial_tu_j(t, x)$ exists for all $(t, x) \in X_T \setminus E$. Hence we have
    \begin{align}\label{first derivative}
         \left\vert \partial_tu_j(t, x)\right\vert  \leq \frac{\kappa_0}{t}
    \end{align}
    for all $j$ and for all $(t, x) \in X_T \setminus E$. \\
    \indent Note that $u_j(t, \cdot) \rightarrow u(t, \cdot)$ in $cap_{\omega_t}$ as $j \rightarrow \infty$ for a.e. $t \in (0, T)$ and the family $\{u_j\}_{j = 1}^{\infty}$ is locally uniformly semi-concave in $(0, T)$. Thus by Lemma \ref{Lemma 2.9}, there exists a Borel subset $G \subset X_T$ such that for all $j$,
    \begin{align*}
        \int_{G \cap ((0, T) \times K)}dt \wedge (\theta + dd^c\psi_j)^n = \int_{G \cap ((0, T) \times K}dt \wedge (\theta + dd^c\psi)^n = 0
    \end{align*}
    and $\partial_tu_j$ pointwisely converges to $\partial_tu$ in $((0, T) \times K) \setminus G$ as $j \rightarrow \infty$. Hence we have $\left\vert \partial_tu(t, x)\right\vert \leq \frac{\kappa_0}{t}$ for all $(t, x) \in ((0, T) \times K)\setminus G$. \\
     \indent Next, fix an interval $J \Subset J' \Subset (0, T)$. By the assumption, there exists a constant $C_0 > 0$ such that $t \mapsto u_j(t, x) + C_0\log t$ is concave in $J'$ for all $x \in X$ and $j$. Since $u_j \rightarrow u$ weakly as $j \rightarrow \infty$, $t \mapsto u(t, x)+C_0 \log t$ is concave in $J'$ for all $x \in X$. 
    \indent Fix $\varepsilon > 0$. We first show that
    \begin{align}\label{result 1}
        \left(e^{\partial_tu_j+F(t, x, u_j)} - e^{\partial_tu+F(t, x, u)}\right)dt \wedge (\theta+dd^c\psi_j)^n \rightarrow 0 \text{ weakly }
    \end{align}
    as $j \rightarrow \infty$. In fact, for a sufficiently small $0 < \delta < \sup_{t \in J'}t - \sup_{t \in J}t$ which will be chosen later,
    \begin{align*}
        &\left\vert \int_{X_T}\chi(t, x)\left(e^{\partial_tu_j+F(t, x, u_j)} - e^{\partial_tu+F(t, x, u)}\right)dt \wedge (\theta+dd^c\psi_j)^n \right\vert \\
        &\leq C_1 \int_{X_T \setminus (E \cap G)}\chi(t, x) \left\vert\partial_tu_j - \partial_tu \right\vert dt \wedge (\theta+dd^c\psi_j)^n \\
        & \quad + C_1 \int_{X_T \setminus (E \cap G)}\chi(t, x) \lvert F(t, x, u_j) - F(t, x, u)\rvert dt \wedge (\theta + dd^c\psi_j)^n \\
        &\leq C_1\int_{(J \times K) \setminus E}\chi(t, x) \left\vert \partial_tu_j(t, x) - \frac{u_j(t+\delta, x) - u_j(t, x)}{\delta}\right\vert dt \wedge (\theta+dd^c\psi_j)^n \\
        & \quad+C_1\int_{(J \times K) \setminus G}\chi(t, x) \left\vert\partial_tu(t, x) - \frac{u(t+\delta, x) - u(t, x)}{\delta} \right\vert dt \wedge (\theta+dd^c\psi_j)^n \\
        & \quad + C_1\int_{(J \times K)}\chi(t, x)\left\vert \frac{u_j(t+\delta, x) - u_j(t, x)}{\delta} - \frac{u(t+\delta, x) - u(t, x)}{\delta}\right\vert dt \wedge (\theta+dd^c\psi_j)^n \\
        & \quad + C_1\int_{X_T }\chi(t, x) \lvert F(t, x, u_j) - F(t, x, u)\rvert dt \wedge (\theta + dd^c\psi_j)^n \\
        & =: I_1+I_2+I_3+I_4,
    \end{align*}
    where $C_1 = \sup_{t \in J}e^{\frac{\kappa_0}{t}+M_F} > 0$. \\
    \indent For each $j$, let $v_j(t, x) := u_j(t, x) + C_0\log t$. It follows from the concavity of the map $t \mapsto v_j(t, x)$ in $J'$ for all $x \in X$ and the well-definedness of $\partial_tu_j(t, x)$ on $(J \times K) \setminus E$ for all $j$ that 
    \begin{align}\label{inequality 1}
        \partial_tv_j(t, x) \geq \frac{v_j(t+\delta, x) - v_j(t, x)}{\delta}
    \end{align}
    for all $(t, x) \in (J \times K) \setminus E$ and $j$. Recall that $E$ satisfies (\ref{E}). Similarly, there exists a Borel subset $\widetilde{E} \subset X_T$ such that
    \begin{align*}
        \int_{\widetilde{E}\cap ((0, T) \times K)}dt \wedge (\theta + dd^c\psi_j)^n = \int_{\widetilde{E}\cap ((0, T) \times K)} dt \wedge (\theta+dd^c\psi)^n = 0
    \end{align*}
    for all $j$, $\partial_tu_j(t+\delta, x)$ exists for all $(t, x) \in (J \times K) \setminus \widetilde{E}$, and
    \begin{align}\label{inequality 2}
        \partial_tv_j(t+\delta, x) \leq \frac{v_j(t+\delta, x) - v_j(t, x)}{\delta} 
    \end{align}
    for all $(t, x) \in (J \times K) \setminus \widetilde{E}$.\\
    \indent We first derive an estimate of $I_1$. 
    \begin{align*}
        I_1 &= C_1\int_{(J \times K)\setminus E}\chi(t, x) \left\vert \partial_tv_j(t, x) - \frac{C_0}{t}-\frac{u_j(t+\delta, x) - u_j(t, x)}{\delta}\right\vert dt \wedge (\theta + dd^c\psi_j)^n \\
        &\leq C_1 \int_{(J \times K) \setminus E}\chi(t, x) \left(\partial_tv_j(t, x) - \frac{v_j(t+\delta, x) - v_j(t, x)}{\delta}\right)dt \wedge (\theta + dd^c\psi_j)^n \\
        &\quad + C_1C_0\int_{(J \times K)\setminus E}\chi(t, x) \left\vert \frac{1}{t}-\frac{\log(t+\delta) - \log t}{\delta}\right\vert dt \wedge (\theta + dd^c\psi_j)^n \\
        &\leq C_1\int_{(J \times K) \setminus (E \cap \widetilde{E})}\chi(t, x) \left(\partial_tv_j(t, x) - \partial_tv_j(t+\delta, x)\right)dt \wedge (\theta + dd^c\psi_j)^n \\
        &\quad + C_1C_0 \int_{J \times K}\chi(t, x) \left\vert \frac{1}{t}-\frac{\log(t+\delta) - \log t}{\delta}\right\vert dt \wedge (\theta + dd^c\psi_j)^n. 
    \end{align*}
    Here, the second inequality follows from (\ref{inequality 1}), and the third inequality follows from (\ref{inequality 2}). By Lemma \ref{CLN}, there exists a constant $C_2 > 0$ such that
    \begin{align}\label{CLN1}
        \int_{J \times K}dt \wedge (\theta + dd^c\psi_j)^n \leq C_2.
    \end{align}
    Now we choose
    \begin{align}\label{delta}
        \delta = \min\left\{\frac{\varepsilon \inf_{t \in J}t^2}{8C_0C_1C_2\lVert \chi\rVert_{L^{\infty}}}, \frac{\varepsilon\inf_{t \in J}t}{8C_1C_2(\kappa_0+C_0)\left\Vert \partial_t\chi\right\Vert_{L^{\infty}}}, \sup_{t \in J'}t - \sup_{t \in J}t\right\}.
    \end{align}
    By (\ref{CLN1}) and (\ref{delta}), we have
    \begin{align*}
        C_0&C_1\int_{J \times K}\chi(t, x)\left\vert \frac{1}{t}-\frac{\log(t+\delta) - \log t}{\delta}\right\vert dt \wedge (\theta + dd^c\psi_j)^n \\
        &\leq C_0C_1\int_{J \times K}\chi(t, x)\left\vert\frac{1}{t} - \frac{1}{t+\delta} \right\vert dt \wedge (\theta + dd^c\psi_j)^n \\
        &\leq C_0C_1\delta \int_{J \times K}\frac{\chi(t, x)}{t^2}dt \wedge (\theta + dd^c\psi_j)^n\\
        &\leq \frac{\varepsilon}{8}.
    \end{align*}
    Using (\ref{first derivative}), (\ref{inequality 1}), (\ref{inequality 2}), (\ref{CLN1}), (\ref{delta}), and Lemma \ref{integration by parts}, we get
    \begin{align*}
        &C_1\int_{(J \times K)\setminus (E\cap \widetilde{E})}\chi(t, x)\left(\partial_tv_j(t, x) - \partial_tv_j(t+\delta, x)\right)dt \wedge (\theta + dd^c\psi_j)^n \\
        &= C_1\int_{(J \times K) \setminus (E \cap \widetilde{E})}\partial_t\chi(t, x)(v_j(t+\delta, x) - v_j(t, x))dt \wedge (\theta + dd^c\psi_j)^n \\
        &\leq C_1\delta \int_{(J \times K) \setminus (E \cap \widetilde{E})}\left\vert \partial_t\chi(t, x)\right\vert \max\left(\left\vert \partial_tv_j(t, x)\right\vert, \left\vert\partial_tv_j(t+\delta, x)\right\vert\right)dt \wedge (\theta+dd^c\psi_j)^n \\
        &\leq C_1\delta\int_{(J \times K) \setminus (E \cap \widetilde{E})}\left\vert \partial_t\chi(t, x)\right\vert \left(\frac{\kappa_0+C_0}{t}\right) dt \wedge (\theta+dd^c\psi_j)^n\\ 
        &\leq \frac{\varepsilon}{8}.
    \end{align*}
    Indeed, the first equality holds by Lemma \ref{integration by parts}. The second inequality holds by (\ref{inequality 1}) and (\ref{inequality 2}), and the third inequality holds by (\ref{first derivative}). The last inequality holds by (\ref{CLN1}) and (\ref{delta}). Hence for all $j$,
    \begin{align}\label{I1}
        I_1 \leq \frac{\varepsilon}{4}.
    \end{align}
    Secondly, we estimate $I_2$ by the same argument for the estimate of $I_1$. For all $j$, we have
    \begin{align} \label{I2}
        I_2  \leq \frac{\varepsilon}{4}.
    \end{align}
    Next, we estimate $I_3$ as follows.
    \begin{equation}\label{I3-1}
        \begin{aligned}
            I_3 &\leq \frac{C_1}{\delta}\int_{J \times K}\chi(t, x) \lvert u_j(t+\delta, x) - u(t+\delta, x) \rvert dt \wedge (\theta + dd^c\psi_j)^n \\
        &\quad +\frac{C_1}{\delta}\int_{J \times K}\chi(t, x) \lvert u_j(t, x) - u(t, x)\rvert dt \wedge (\theta + dd^c\psi_j)^n.
        \end{aligned}
    \end{equation}
    Since $u_j(t, \cdot) \rightarrow u(t, \cdot)$ in $cap_{\theta}$ as $j \rightarrow \infty$ for a.e. $t \in (0, T)$, there exists $j_1 > 0$ such that for all $j > j_1$, 
    \begin{equation}\label{I3-2}
        \begin{aligned}
            &\frac{C_1}{\delta}\int_{J \times K}\chi(t, x) \lvert u_j(t+\delta, x)- u(t+\delta, x)\rvert dt \wedge (\theta + dd^c\psi_j)^n \\
        &\quad + \frac{C_1}{\delta}\int_{J \times K}\chi(t, x) \lvert u_j(t, x) - u(t, x)\rvert dt \wedge (\theta + dd^c\psi_j)^n \leq \frac{\varepsilon}{4}.
        \end{aligned}
    \end{equation}
    Therefore by (\ref{I3-1}) and (\ref{I3-2}), for all $j > j_1$, 
    \begin{align}\label{I3}
        I_3 \leq \frac{\varepsilon}{4}.
    \end{align}
    Again by the convergence of $u_j(t, \cdot) \rightarrow u(t, \cdot)$ in $cap_{\theta}$ for a.e. $t \in (0, T)$ and continuity of $F(t, x, r)$, there exists $j_2 > 0$ such that for all $j > j_2$,
    \begin{align}\label{I4}
        I_4 \leq \frac{\varepsilon}{4}.
    \end{align}
    Combining the estimates of $I_1$, $I_2$, $I_3$, and $I_4$ in (\ref{I1}), (\ref{I2}), (\ref{I3}), and (\ref{I4}), we get
    \begin{align*}
        \left\vert \int_{X_T}\chi(t, x) \left(e^{\partial_tu_j+F(t, x, u_j)} - e^{\partial_tu+F(t, x, u)}\right)dt \wedge (\theta+dd^c\psi_j)^n\right\vert \leq \varepsilon
    \end{align*}
    for all $j > \max\{j_1, j_2\}$. \\
    \indent Next, we show that
    \begin{align}\label{result 2}
        e^{\partial_tu+F(t, x, u)}dt \wedge (\theta + dd^c\psi_j)^n \rightarrow e^{\partial_tu+F(t, x, u)}dt \wedge (\theta+dd^c\psi)^n \text{ weakly }
    \end{align}
    as $j \rightarrow \infty$. Indeed, we use the similar arguments in the proof of (\ref{result 1}). For $\delta$ chosen in (\ref{delta}), we have
    \begin{align*}
        &\left\vert \int_{X_T}\chi(t, x) e^{\partial_tu+F(t, x, u)}dt \wedge \{(\theta + dd^c\psi_j)^n - (\theta + dd^c\psi)^n\}\right\vert \\
        &\quad \leq \left\vert \int_{J \times K}\chi(t, x) \left(e^{\partial_tu+F(t, x, u)} - e^{\frac{u(t+\delta, x) - u(t, x)}{\delta}+F(t, x, u)}\right)dt \wedge (\theta + dd^c\psi_j)^n \right\vert \\
        &\quad \quad + \left\vert \int_{J \times K}\chi(t, x)\left(e^{\partial_tu+F(t, x, u)} - e^{\frac{u(t+\delta, x) - u(t, x)}{\delta}+F(t, x, u)}\right)dt \wedge (\theta + dd^c\psi)^n\right\vert \\
        &\quad \quad + \left\vert \int_{J \times K}\chi(t, x) e^{\frac{u(t+\delta, x) - u(t, x)}{\delta}+F(t, x, u)}dt \wedge \{(\theta + dd^c\psi_j)^n - (\theta + dd^c\psi)^n\}\right\vert \\
        &\quad =: I_5+I_6+I_7.
    \end{align*}
    We start with the estimate for $I_5$. 
    \begin{align*}
        I_5 &= \left\vert\int_{J \times K}\chi(t, x) e^{F(t, x, u)}\left(e^{\partial_tu} - e^{\frac{u(t+\delta, x) - u(t, x)}{\delta}}\right)dt \wedge (\theta+dd^c\psi_j)^n\right\vert \\
        &\leq e^{M_F}\int_{J \times K}\chi(t, x) \left\vert e^{\partial_tu}-e^{\frac{u(t+\delta, x) - u(t, x)}{\delta}}\right\vert dt \wedge (\theta + dd^c\psi_j)^n \\
        &\leq C_3\int_{(J \times K) \setminus G}\chi(t, x) \left\vert \partial_tu - \frac{u(t+\delta, x) - u(t, x)}{\delta}\right\vert dt \wedge (\theta + dd^c\psi_j)^n,
    \end{align*}
    where $C_3 = \sup_{t \in J}e^{\frac{\kappa_0}{t} +\frac{2\lVert u\rVert_{L^{\infty}(X_T)}}{\delta}+M_F} > 0$. By repeating the proof for the estimate of $I_1$, we obtain that for all $j$,
    \begin{align}\label{I5}
        I_5 \leq \frac{\varepsilon}{4}.
    \end{align}
    Similarly, for all $j$,
    \begin{align}\label{I6}
        I_6 \leq \frac{\varepsilon}{4}.
    \end{align}
    Finally, we estimate $I_7$ as follows.
    \begin{align*}
        I_7 &= \left\vert \int_{J \times K}\chi(t, x) e^{\frac{u(t+\delta, x) - u(t, x)}{\delta}+F(t, x, u)}dt \wedge \{(\theta+dd^c\psi_j)^n - (\theta+dd^c\psi)^n\}\right\vert \\
        &\leq \int_J dt \left\vert \int_K \chi(t, x)e^{\frac{u(t+\delta, x) - u(t, x)}{\delta}+F(t, x, u)}\{(\theta+dd^c\psi_j)^n - (\theta+dd^c\psi)^n\}\right\vert.
    \end{align*}
    Let $\Phi_j(t) := \left\vert \int_K \chi(t, x)e^{\frac{u(t+\delta, x) - u(t, x)}{\delta}+F(t, x, u)}\{(\theta + dd^c\psi_j)^n - (\theta + dd^c\psi)^n\}\right\vert$. By the definition, $\Phi_j(t) \geq 0$ for all $t \in J$ and
    \begin{equation}\label{boundedness}
        \begin{aligned}
            \Phi_j(t) &\leq \int_K\chi(t, x)e^{\frac{u(t+\delta, x) - u(t, x)}{\delta}+F(t, x, u)}(\theta+dd^c\psi_j)^n \\
        &\quad+\int_K \chi(t, x)e^{\frac{u(t+\delta, x)-u(t, x)}{\delta}+F(t, x, u)}(\theta+dd^c\psi)^n \\
        &\leq e^{\frac{2\lVert u\rVert_{L^{\infty}(X_T)}}{\delta}+M_F}\lVert \chi\rVert_{L^{\infty}(X_T)}\left(\int_K (\theta+dd^c\psi_j)^n + \int_K(\theta+dd^c\psi)^n\right) \\
        &\leq 2C_4e^{\frac{2\lVert u\rVert_{L^{\infty}(X_T)}}{\delta}+M_F}\lVert \chi\rVert_{L^{\infty}(X_T)}
        \end{aligned}
    \end{equation}
    for some uniform constant $C_4$ by the elliptic Chern-Levine-Nirenberg inequality. For each $t \in (0, T)$, the map $x \mapsto \frac{u(t+\delta, x)-u(t, x)}{\delta}+F(t, x, u)$ is quasi-continuous in $X$ with respect to $cap_{\theta}$ since $u(t+\delta, \cdot), u(t, \cdot) \in PSH(X, B\theta)$ and $F(t, x, r)$ is continuous. Hence for every $t \in (0, T)$ we have
    \begin{align}\label{vanishing}
        \Phi_j(t) \rightarrow 0
    \end{align}
    as $j \rightarrow \infty$. It follows from (\ref{boundedness}) and (\ref{vanishing}) that there exists $j_3 > 0$ such that for all $j > j_3$,
    \begin{align}\label{I7}
        I_7 \leq \int_J \Phi_j(t)dt \leq \frac{\varepsilon}{2}.
    \end{align}
    Combining the estimates for $I_5$, $I_6$, and $I_7$ in (\ref{I5}), (\ref{I6}), and (\ref{I7}), we have
    \begin{align*}
        \left\vert\int_{X_T}\chi(t, x) e^{\partial_tu+F(t, x, u)}dt \wedge \{(\theta + dd^c\psi_j)^n - (\theta+dd^c\psi)^n\}\right\vert \leq \varepsilon
    \end{align*}
    for all $j > j_3$, which implies (\ref{result 2}). \\
    \indent Finally, it follows from (\ref{result 1}) and (\ref{result 2}) that
    \begin{align*}
        \int_{X_T}\chi(t, x) e^{\partial_tu_j+F(t, x, u)}dt \wedge (\theta+dd^c\psi_j)^n \rightarrow \int_{X_T}\chi(t, x) e^{\partial_tu+F(t, x, u)}dt \wedge (\theta+dd^c\psi)^n
    \end{align*}
    as $j \rightarrow \infty$. 
\end{proof}
\section{Existence Result}
\subsection{Bounded solution}
We first show that if the Monge-Amp\`ere equation is solvable for $\mu$ with a bounded solution, then so is the Cauchy problem for the complex Monge-Amp\`ere flow. As a consequence, if $\mu$ is dominated by a Monge-Amp\`ere measure of some H\"older continuous quasi-plurisubharmonic function, then there exists a bounded solution.
\begin{lemma}\label{main lemma}
    Let $u_0 \in PSH(X, \omega_0) \cap L^{\infty}(X)$. Assume that there exists $\psi\in PSH(X, \theta) \cap L^{\infty}(X)$ solving
    \begin{align*}
        (\theta+dd^c\psi)^n = d\mu \text{ in $X$}.
    \end{align*}
    Then, there exists $u \in \mathcal{P}(X_T, \omega_t) \cap L^{\infty}(X_T)$ satisfying
    \begin{align}\label{cauchy problem}
        \begin{cases}
            &dt \wedge (\omega_t+dd^cu)^n = e^{\partial_tu+F(t, x, u)}dt \wedge d\mu \text{ in $X_T$}, \\
            &\lim_{t \rightarrow 0+}u(t, \cdot) = u_0 \text{ in } L^1(X, d\mu).
        \end{cases}
    \end{align}
\end{lemma}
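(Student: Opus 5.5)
We will obtain $u$ as a limit of solutions of regularized problems, using Subsection 2.3 for uniform estimates and Lemma \ref{convergence} to pass to the limit on the right-hand side.

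\textbf{Step 1: regularizing the measure.} Approximate $\psi$ by smooth $\theta$-psh functions. Since $\theta$ is only semi-positive and big, we first regularize in the Kähler class $\theta+\varepsilon_j\omega_X$, or equivalently take a decreasing sequence of smooth $(\theta+\varepsilon_j\omega_X)$-psh functions $\psi_j\searrow\psi$ (Demailly regularization). We then replace $\psi_j$ by the smooth solution $\tilde\psi_j$ of
\begin{align*}
(\theta+\varepsilon_j\omega_X+dd^c\tilde\psi_j)^n=c_j\big((\theta+\varepsilon_j\omega_X+dd^c\psi_j)^n+\varepsilon_j dV\big),
\end{align*}
where $c_j$ is a normalizing constant. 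The resulting measures $\mu_j$ are smooth, satisfy $\mu_j\ge \varepsilon'_j dV$, and $\mu_j\to\mu$ weakly by continuity of the Monge--Amp\`ere operator along decreasing sequences. Uniform boundedness of $\tilde\psi_j$ comes from Kołodziej's $L^\infty$ estimate, since the densities are controlled by the Monge--Amp\`ere measure of a uniformly bounded function. The natural cleaner route is to work with $\theta_j:=\theta+\varepsilon_j\omega_X$ and $\omega_{t}+\varepsilon_j\omega_X$, so that the standing assumption of Subsection 2.3 ($\theta_j$-psh smooth potential, $d\mu_j\ge\varepsilon dV$) holds. By \cite[Theorem 3.4]{GLZ20} we then obtain $u_j\in\mathcal{P}(X_T,\omega_t+\varepsilon_j\omega_X)\cap L^\infty$ solving the flow with $\mu_j$ and initial datum $u_0$.

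\textbf{Step 2: uniform estimates.} Lemmas \ref{uniform estimate}, \ref{lipschitzness} and \ref{concavity1} give constants $M,\kappa_0,C_0(J)$ independent of $j$, because they depend only on $F$, $\|u_0\|_\infty$, $B$, $A$, $T$ and $\sup_j\|\tilde\psi_j\|_\infty$. This gives hypotheses (d), (e), (f) of Lemma \ref{convergence}. Since the family is uniformly Lipschitz on compact subintervals of $(0,T)$, after a diagonal subsequence $u_j\to u$ pointwise/in $L^1$, with $u$ a parabolic potential for $\omega_t$ that is bounded by $M$ and inherits the same Lipschitz and concavity bounds.

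\textbf{Step 3: capacity convergence (main obstacle).} Hypothesis (c) of Lemma \ref{convergence} requires $u_j(t,\cdot)\to u(t,\cdot)$ in capacity for a.e.\ $t$; $L^1$ convergence alone is not enough. I would first try to arrange monotonicity by choosing the approximants so that the comparison principle for smooth flows makes $u_j$ decreasing in $j$. This would use $\varepsilon_j\searrow0$, the monotonicity of $\omega_t+\varepsilon_j\omega_X$ in $j$, and $\mu_j$ arranged monotonically, possibly by replacing $u_j$ with envelopes $(\sup_{k\ge j}u_k)^*$. Decreasing sequences converge in capacity. If monotonicity fails, the fallback is a stability estimate. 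Each slice $u_j(t,\cdot)$ solves an elliptic equation $(\omega_t+dd^cu_j)^n=g_j\,d\mu_j$ with $g_j=e^{\partial_tu_j+F}$ uniformly bounded on $J\times X$, and $\mu_j$ is dominated by Monge--Amp\`ere measures of uniformly bounded potentials. Kołodziej-type $L^1$-to-capacity stability would then give (c). Once (c) holds, Lemma \ref{Lemma 2.3} and the capacity convergence lemma handle the left-hand side, including the vanishing $\varepsilon_j\omega_X$ perturbation. Lemma \ref{convergence}, applied with $\psi_j\to\psi$ satisfying (a), (b), handles the right-hand side, so $u$ solves the flow.

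\textbf{Step 4: initial condition.} The initial condition follows from the uniform bound $u_0-Ct\le u_j(t,\cdot)$, obtained from a subsolution of the form $u_0+n t\log t-Ct$ via comparison. Combined with an upper estimate from the semi-concavity in $t$ and the $L^1(d\mu)$ continuity of $\theta$-psh functions, this gives $u(t,\cdot)\to u_0$ in $L^1(X,d\mu)$.
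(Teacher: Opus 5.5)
Your overall architecture matches the paper: regularize, solve nondegenerate flows via \cite{GLZ20}, take uniform estimates from Subsection 2.3, and use Lemma \ref{convergence} for the right-hand side. But Step 3, which you correctly call the crux, is not resolved, and neither of your two routes works.

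\textbf{Step 3: neither route to convergence in capacity works.} Monotonicity first. To get $u_{j+1}\le u_j$ by comparison, $u_{j+1}$ must be a subsolution of the $j$-th problem, which forces $\mu_{j+1}\ge\mu_j$ and hence $\mu_j\le\mu$. Together with $\mu_j\ge\varepsilon_j'dV$, this is impossible as soon as $\mu$ has a singular part, which is exactly the case of interest. The reverse inequality fails because $u_j$ is not $(\omega_t+\varepsilon_{j+1}\omega_X)$-psh. The envelopes $(\sup_{k\ge j}u_k)^*$ do decrease to $u$, but they only control $\{u_j>u+\delta\}$, which Hartogs already handles; the real difficulty is $\{u_j<u-\delta\}$.

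Your fallback is not a theorem under the hypotheses you list. Kołodziej-type stability needs uniform capacity domination $\mu_j(K)\le C\,cap(K)^{1+\alpha}$. Mere domination by Monge--Amp\`ere measures of uniformly bounded potentials cannot upgrade $L^1$ convergence to convergence in capacity: take $u_j=\phi_j$, a uniformly bounded $L^1$-convergent sequence whose Monge--Amp\`ere measures do not converge.

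The missing ingredient is that the dominating potentials themselves converge decreasingly. The paper takes $d\mu_j=(\theta+2\lambda_j\omega_X+dd^c\psi_j)^n$ with $\psi_j\downarrow\psi$. For a.e.\ $t$ and fixed $\lambda=\lambda_{j_0}$ this gives $(\omega_t+2\lambda\omega_X+dd^cu_j(t,\cdot))^n\le e^{C_0}(\omega_t+2\lambda\omega_X+dd^c\psi_j)^n$, and the paper then runs \cite[Theorem 3.8]{DP12}:
\begin{itemize}
\item reduce to showing $\int_{\{u_j(t,\cdot)<u(t,\cdot)-\delta\}}(\omega_t+2\lambda\omega_X+dd^cu_j(t,\cdot))^n\to0$;
\item bound this by $\frac{e^{C_0}}{\delta}\int_X|u-u_j|(\omega_t+2\lambda\omega_X+dd^c\psi_j)^n$, and remove the positive part by Hartogs;
\item integrate by parts to replace $dd^c\psi_j$ by $dd^c\psi$, at the cost of $\int_X|\psi_j-\psi|(\cdots)\wedge T_j\to0$, which holds by monotonicity and uniform absolute continuity with respect to capacity;
\item conclude with \cite[Lemma 3.1]{Ce12}, then let $j_0\to\infty$.
\end{itemize}

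\textbf{Step 1: the auxiliary Calabi--Yau step.} Replacing $\psi_j$ by $\tilde\psi_j$ is both unnecessary and harmful. Kołodziej's $L^\infty$ estimate needs uniform capacity domination, which a merely bounded $\psi$ does not provide, so $\sup_j\|\tilde\psi_j\|_\infty<\infty$ is unjustified. You also lose the monotone convergence of the potentials that Step 3 needs. Simply note that $\theta+2\lambda_j\omega_X+dd^c\psi_j\ge\lambda_j\omega_X$ already gives a smooth measure $\ge\lambda_j^n\omega_X^n$ with uniformly bounded, decreasing potentials.

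\textbf{Step 4: the initial condition.} The function $u_0+nt\log t-Ct$ is not a subsolution, since $(\omega_t+dd^cu_0)^n$ may vanish where $\mu_j$ lives. The barrier must contain $t\psi_j$, namely $(1-t)e^{-At}u_0+t\psi_j+n(t\log t-t)-Ct$, as in \cite[Proposition 2.2]{GLZ20}.

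The upper bound cannot come from semi-concavity in $t$: its constants degenerate like $\log t$ and it carries no information tying $\lim_{t\to0}u$ to $u_0$. The paper instead proceeds as follows:
\begin{itemize}
\item integrate the equation and apply Jensen to get $\int_U\chi\,\partial_tu_j\,d\mu_j\le C$, so that $t\mapsto\int_U\chi u_j(t,\cdot)\,d\mu_j-Ct$ is nonincreasing;
\item pass to $j\to\infty$, which itself requires the capacity convergence and $\mu_j\to\mu$;
\item use \cite{KN23} to show that cluster points $w$ of $u(t_k,\cdot)$ satisfy $w\le u_0$ $\mu$-a.e.
\end{itemize}
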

\begin{proof}
    It follows from \cite[Theorem 2]{BK07} that there exist two sequences $\{\lambda_j\}_{j = 1}^{\infty} \subset \mathbb{R^+}$ and $\{\psi_j\}_{j = 1}^{\infty}$ such that $\psi_j \in PSH(X, \theta + \lambda_j\omega_X)\cap C^{\infty}(X)$, $\lambda_j \downarrow 0$, and $\psi_j \downarrow \psi$ as $j \rightarrow \infty$. We define 
    \begin{align*}
        d\mu_j := (\theta+2\lambda_j\omega_X+dd^c\psi_j)^n
    \end{align*} so that $d\mu_j \rightarrow d\mu$ weakly as $j \rightarrow \infty$. Indeed, let $\{U_k\}_{k = 1}^m$ be an open cover of $X$ and fix $\{\chi_k\}_{k = 1}^m$ be a partition of unity subordinate to $\{U_k\}_{k = 1}^m$. Let $\phi_k(x)$ and $\rho_k(x)$ be local potentials of $\theta$ and $\omega_X$ on $U_k$ for each $k$ and let $\chi(x)$ be a test function on $X$. We have
    \begin{equation}\label{potential}
        \begin{aligned}
            &\lim_{j \rightarrow \infty}\int_{X}\chi(x) (\theta+2\lambda_j\omega_X+dd^c\psi_j)^n \\
            &\quad= \lim_{j \rightarrow \infty}\sum_{k = 1}^m \int_{U_k}\chi(x) (dd^c\phi_k+2\lambda_jdd^c\rho_k+dd^c\psi_j)^n \\
        &\quad= \sum_{k = 1}^m\int_{U_k}\chi(x) (dd^c\phi_k+dd^c\psi)^n \\
        &\quad= \int_{X}\chi d\mu.
        \end{aligned}
    \end{equation}
    The third equality in (\ref{potential}) holds by using a partition of unity subordinate to $\{U_k\}_{k = 1}^m$. Since $d\mu_j \geq \lambda_j^n \omega_X^n > 0$, it follows from \cite[Corollary 4.5]{GLZ20} that for each $j$, there exists $u_j \in \mathcal{P}(X_T, \omega_t+2\lambda_j\omega_X) \cap C(X_T)$ such that
    \begin{align}\label{cauchy problem2}
        \begin{cases}
            &dt \wedge (\omega_t+2\lambda_j\omega_X+dd^cu_j)^n = e^{\partial_tu_j+F(t, x, u_j)}dt \wedge d\mu_j \text{ in $X_T$},\\
            &\lim_{t \rightarrow 0+}u_j(t, \cdot) = u_0 \text{ in } L^1(X, dV).
        \end{cases}
    \end{align}
    Since the sequence $\{\psi_j\}_{j = 1}^{\infty}$ is uniformly bounded in $X$, it follows from Lemma \ref{uniform estimate} that $\{u_j\}_{j = 1}^{\infty}$ is uniformly bounded in $X_T$. We pick a subsequence of $\{u_j\}_{j = 1}^{\infty}$ such that $u_j \rightarrow u$ in $L^1(X, dV)$ for some $u \in \mathcal{P}(X_T, \omega_t) \cap L^{\infty}(X_T)$. We show that $u$ is the desired solution. \\
    \indent First, we show that 
    \begin{align}\label{4.4}
        dt \wedge (\omega_t+2\lambda_j\omega_X+dd^cu_j)^n \rightarrow dt \wedge (\omega_t+dd^cu)^n \text{ weakly }
    \end{align}
    as $j \rightarrow \infty$. For a.e. $t \in (0, T)$, 
    \begin{align*}
        (\omega_t+2\lambda_j\omega_X+dd^cu_j(t, \cdot))^n \leq e^{C_0}(\theta+2\lambda_j \omega_X + dd^c\psi_j)^n \leq e^{C_0}(\omega_t+2\lambda_j \omega_X + dd^c\psi_j)^n.
    \end{align*}
    for some constant $C_0 > 0$. We first show that for a.e. $t \in (0, T)$ and for all $j_0 > 0$, 
    \begin{align} \label{4.5}
        u_j(t, \cdot) \rightarrow u(t, \cdot) \text{ in } cap_{\omega_t+2\lambda_{j_0}\omega_X}
    \end{align} as $j \rightarrow \infty$, by following the proof of \cite[Theorem 3.8]{DP12}. Let us fix $j_0$ and denote $\lambda := \lambda_{j_0}$. For a.e. $t \in (0, T)$ and for all $j > j_0$, 
    \begin{align}\label{inequality}
        (\omega_t+2\lambda\omega_X+dd^cu_j(t, \cdot))^n \leq e^{C_0}(\omega_t+2\lambda\omega_X+dd^c\psi_j)^n
    \end{align}
    and $\psi_j \rightarrow \psi$ in $cap_{\omega_t+2\lambda
    \omega_X}$ as $j \rightarrow \infty$. Fix such a $t$. It suffices to show that for all $\delta > 0$,
    \begin{align*}
        \int_{\{u_j(t, \cdot) < u(t, \cdot) - \delta\}}(\omega_t+2\lambda\omega_X+dd^cu_j(t, \cdot))^n \rightarrow 0
    \end{align*}
    as $j \rightarrow \infty$. Fix $\delta > 0$ and $\varepsilon > 0$. It follows from (\ref{inequality}) that 
    \begin{align*}
        &\int_{\{u_j(t, \cdot) < u(t, \cdot) - \delta\}}(\omega_t+2\lambda\omega_X+dd^cu_j(t, \cdot))^n \\&\quad\leq e^{C_0}\int_{\{u_j(t, \cdot) < u(t, \cdot) - \delta\}}(\omega_t+2\lambda \omega_X+dd^c\psi_j)^n \\
        &\quad\leq \frac{e^{C_0}}{\delta}\int_{X}\lvert u(t, \cdot) - u_j(t, \cdot)\rvert (\omega_t+2\lambda\omega_X+dd^c\psi_j)^n \\
        &\quad\leq \frac{e^{C_0}}{\delta}\int_{X}\lvert u(t, \cdot) + \varepsilon - u_j(t, \cdot)\rvert (\omega_t+2\lambda\omega_X+dd^c\psi_j)^n + \frac{e^{C_0}\varepsilon}{\delta}\int_{X}(\omega_t+2\lambda\omega_X)^n.
    \end{align*}
    Here the first term is bounded above by
    \begin{align*}
        \frac{e^{C_0}}{\delta}(I_1+I_2+I_3)
    \end{align*}
    where
    \begin{align*}
        I_1 &:= \int_{X}(u(t, \cdot)-u_j(t, \cdot))(\omega_t+2\lambda\omega_X+dd^c\psi_j)^n, \\
        I_2 &:=  (\lVert u_j\rVert_{L^{\infty}(X_T)}+\lVert u\rVert_{L^{\infty}(X_T)} + \varepsilon)\int_{\{u_j(t, \cdot) > u(t, \cdot)+\varepsilon\}}(\omega_t+2\lambda\omega_X+dd^c\psi_j)^n, \\
        I_3 &:= \varepsilon \int_{X}(\omega_t+2\lambda\omega_X)^n.
    \end{align*}
    It follows from the uniform boundedness of $\{u_j\}_{j = 1}^{\infty}$ and Hartogs lemma that $I_2 \rightarrow 0$ as $j \rightarrow \infty$. Hence it suffices to show that $I_1 \rightarrow 0$ as $j \rightarrow \infty$. Note that
    \begin{align*}
        &\int_{X}(u(t, \cdot)-u_j(t, \cdot))(\omega_t+2\lambda\omega_X+dd^c\psi_j)^n\\ &\quad= \int_{X}(u(t, \cdot)-u_j(t, \cdot))\{(\omega_t+2\lambda\omega_X+dd^c\psi_j)^n - (\omega_t+2\lambda\omega_X+dd^c\psi)^n\}\\ &\quad\quad+ \int_X (u(t, \cdot)-u_j(t, \cdot))(\omega_t+2\lambda\omega_X+dd^c\psi)^n \\
        &\quad = \int_X (\psi_j - \psi)\{(\omega_t+2\lambda\omega_X+dd^cu_j(t, \cdot)) - (\omega_t+2\lambda\omega_X+dd^cu(t, \cdot))\} \wedge T_j \\
        &\quad\quad + \int_X(u(t, \cdot)-u_j(t, \cdot))(\omega_t+2\lambda\omega_X+dd^c\psi)^n
    \end{align*}
    where
    \begin{align*}
        T_j := \sum_{k = 0}^{n-1}(\omega_t+2\lambda\omega_X+dd^c\psi_j)^k \wedge (\omega_t+2\lambda\omega_X+dd^c\psi)^{n-k-1}.
    \end{align*}
    It follows from \cite[Lemma 3.1]{Ce12} that the second term converges to $0$ as $j \rightarrow \infty$. The first term is bounded above by
    \begin{equation}\label{upper bound}
        \begin{aligned}
            &\int_X\lvert \psi_j-\psi\rvert \{(\omega_t+2\lambda\omega_X+dd^cu_j(t, \cdot))+(\omega_t+2\lambda\omega_X+dd^cu(t, \cdot))\}\wedge T_j \\
        &\quad = 2\int_X\lvert \psi_j-\psi\rvert (\omega_t+2\lambda\omega_X+dd^c(u_j(t, \cdot)+u(t, \cdot))) \wedge T_j,    
        \end{aligned}
    \end{equation}
    and the measure $(\omega_t+2\lambda\omega_X+dd^c(u_j(t, \cdot)+u(t, \cdot)) \wedge T_j$ is uniformly absolutely continuous with respect to $cap_{\omega_t+2\lambda\omega_X}$. Since $\psi_j \downarrow \psi$, (\ref{upper bound}) converges to $0$ as $j \rightarrow \infty$.\\ \indent We now show (\ref{4.4}) by using the convergence (\ref{4.5}). Let us fix $t \in (0, T)$ such that (\ref{4.5}) holds. For a test function $\chi(x)$ on $X$,
    \begin{align*}
        &\left\vert\int_{X}\chi(x)(\omega_t+2\lambda_j\omega_X+dd^cu_j(t, \cdot))^n - \int_{X}\chi(x)(\omega_t+dd^cu(t, \cdot))^n\right\vert \\&\quad\leq \left\vert \int_{X}\chi(x)\left\{(\omega_t+2\lambda_j\omega_X+dd^cu_j)^n - (\omega_t+2\lambda_{j_0}\omega_X+dd^cu_j)^n\right\} \right\vert \\
        &\quad\quad+\left\vert \int_{X}\chi(x) \left\{(\omega_t+2\lambda_{j_0}\omega_X+dd^cu_j)^n-(\omega_t+2\lambda_{j_0}\omega_X+dd^cu)^n\right\} \right\vert\\
        &\quad\quad + \left\vert\int_{X}\chi(x)\left\{(\omega_t+2\lambda_{j_0}\omega_X+dd^cu)^n - (\omega_t+dd^cu)^n\right\} \right\vert \\
        &\quad=:S_1+S_2+S_3.
    \end{align*}
    We first prove the convergence of $S_1$. Let us assume that $j \geq j_0$. Recall that $\{\chi_k\}_{k = 1}^m$ is a partition of unity subordinate to $\{U_k\}_{k = 1}^m$. \begin{equation}\label{S_1}
        \begin{aligned}
            S_1 &= \left\vert\sum_{k = 1}^m\int_{U_k}\chi_k(x)\chi(x)\{(dd^c(\phi_k+2\lambda_j\rho_k+u_j))^n - (dd^c(\phi_k+2\lambda_{j_0}\rho_k+u_j))^n\} \right\vert\\
        &\leq 2\sum_{k = 1}^m\left\vert\int_{U_k}\chi_k(x)\chi(x) dd^c(2(\lambda_{j_0}-\lambda_{j})\rho_k) \wedge (dd^c(\phi_k+2\lambda_{j_0}\rho_k+u_{j}))^{n-1} \right\vert \\
        &\leq 4\lvert \lambda_{j_0}-\lambda_j\rvert\sum_{k =1}^m\lVert \chi_k\chi\rVert_{L^{\infty}(U_k)}\int_{U_k}dd^c\rho_k \wedge (dd^c(\phi_k+2\lambda_{j_0}\rho_k+ u_j))^{n-1}.
        \end{aligned}
    \end{equation}
    Since $\rho_k(x)$, $\phi_k(x)$ and $u_j(t, x)$ are uniformly bounded, it follows from the elliptic Chern-Levine-Nirenberg inequality that $S_1 \rightarrow 0$ as $j, j_0 \rightarrow \infty$. Next, the convergence of $S_3$ follows from the similar argument with the proof for $S_1$. Finally, it follows from (\ref{4.5}) that $S_2 \rightarrow 0$ as $j \rightarrow \infty$.  \\
    \indent Next, we show that 
    \begin{align}\label{righthandside}
        e^{\partial_tu_j+F(t, x, u_j)}dt \wedge d\mu_j \rightarrow e^{\partial_tu+F(t, x, u)}dt \wedge d\mu \text{ weakly}
    \end{align}
    as $j \rightarrow \infty$. We proceed as the proof for the convergence (\ref{4.4}). Let us fix $j_0 > 0$. We have
    \begin{align*}
        e^{\partial_tu_j+F(t, x, u_j)}dt \wedge d\mu_j - e^{\partial_tu+F(t, x, u)}dt \wedge d\mu =: A_1+A_2+A_3
    \end{align*}
    where
    \begin{align*}
        A_1 &:= e^{\partial_tu_j+F(t, x, u_j)}dt \wedge\left\{(\theta+2\lambda_j\omega_X+dd^c\psi_j)^n - (\theta+2\lambda_{j_0}\omega_X+dd^c\psi_j)^n\right\}, \\
        A_2 &:= e^{\partial_tu_j+F(t, x, u_j)}dt \wedge (\theta+2\lambda_{j_0}\omega_X+dd^c\psi_{j})^n \\
        &\quad- e^{\partial_tu+F(t, x, u)}dt \wedge (\theta+2\lambda_{j_0}\omega_X+dd^c\psi)^n, \\
        A_3 &:= e^{\partial_tu+F(t, x, u)}dt \wedge \left\{(\theta+2\lambda_{j_0}\omega_X+dd^c\psi)^n- (\theta+dd^c\psi)^n\right\}.
    \end{align*}
    The weak convergence $A_1 \rightarrow 0$ and $A_3 \rightarrow 0$ as $j \rightarrow \infty$ follows from the similar argument with the proof of the convergence of $S_1$ in (\ref{S_1}). Hence it suffices to prove the weak convergence of $A_2 \rightarrow 0$ as $j \rightarrow \infty$.
    By Lemma \ref{lipschitzness}, there exists a constant $\kappa > 0$
    \begin{align}\label{estimate on first derivative}
        \left\vert\partial_tu_j(t, x)\right\vert \leq \frac{\kappa}{t} \text{ for all }j
    \end{align}
    for all $(t, x) \in X_T$ where $\partial_tu_j(t, x)$ is well-defined. Moreover, by Lemma \ref{concavity1}, for all intervals $J \Subset (0, T)$ there exists a constant $C = C(J)> 0$ such that 
    \begin{align*}
        t \mapsto u_j(t, x) \text{ is concave in } J
    \end{align*}
    for all $x \in X$ and $j$.
    It follows from Lemma \ref{convergence} that 
    \begin{align*}
        e^{\partial_tu_j+F(t, x, u_j)}dt \wedge (\theta+2\lambda_{j_0}\omega_X+dd^c\psi_j)^n \rightarrow e^{\partial_tu+F(t, x, u)}dt \wedge (\theta+2\lambda_{j_0}\omega_X+dd^c\psi)^n
    \end{align*} 
    weakly as $j \rightarrow \infty$.
    \\
    \indent Finally, we show that
    \begin{align*}
        \lim_{t \rightarrow 0+}u(t, \cdot) = u_0 \text{ in } L^1(X, d\mu).
    \end{align*}
    It follows from \cite[Proposition 2.2]{GLZ20} that
    \begin{align*}
        u_j(t, x) \geq (1-t)e^{-At}u_0+t\psi_j+n(t\log t - t) - C_1\frac{e^{\lambda_F t}-1}{\lambda_F}
    \end{align*}
    for all $0 \leq t \leq 1$ and $j$, where $C_1$ is a constant depending only on $F$, $A$, $\lVert u_0\rVert_{L^{\infty}(X)}$ and $\lVert \psi_j \rVert_{L^{\infty}(X)}$. Hence by taking $j \rightarrow \infty$ and $t \rightarrow 0+$, we get
    \begin{align*}
        \lim_{t \rightarrow 0+}u(t, x) \geq u_0(x)
    \end{align*}
    for all $x \in X$. \\
    \indent Next, fix an open set $U \subset X$ and let $\chi(x)$ be a positive continuous test function on $X$. Let $C_2 = -\inf_{X_T \times [-\lVert u_j\rVert_{L^{\infty}(X_T)}, \lVert u_j\rVert_{L^{\infty}(X_T)}]}F(t, x, r)$. We claim that the function $t \mapsto \int_U \chi u_j(t, \cdot)d\mu_j - C_3t$ is nonincreasing for some constant $C_3 > 0$. Since $u_j$ satisfies (\ref{cauchy problem2}), for a.e. $t \in (0, T)$,
    \begin{align*}
        \int_U \chi(x) e^{\partial_tu_j(t, \cdot)-C_2}d\mu_j \leq \int_X\omega_t^n \leq B^n\int_X \theta^n.
    \end{align*}
    If $\mu_j(U) = 0$, we are done. Otherwise, by Jensen's inequality, we have
    \begin{align*}
        \int_U \chi(x) e^{\partial_tu_j(t, \cdot)}\frac{d\mu_j}{\mu_j(U)} \geq \exp \left(\int_U \partial_tu_j(t, \cdot)\frac{\chi(x) d\mu_j}{\mu_j(U)}\right)
    \end{align*}
    Hence we get
    \begin{align*}
        \int_U \chi(x)\partial_tu_j(t, \cdot)d\mu_j \leq  \mu_j(U)\left(n\log B + C_2 + \log \int_X \theta^n\right) \leq C_3.
    \end{align*}
    for some constant $C_3 > 0$. Therefore the function $t \mapsto \int_U \chi(x) u_j(t, \cdot)d\mu_j - C_2t$ is nonincreasing, which implies that
    \begin{align*}
        \int_U \chi(x) u_j(t, \cdot)d\mu_j \leq \int_U \chi(x) u_0d\mu_j + C_2t.
    \end{align*}
    Letting $j \rightarrow \infty$, we get
    \begin{align*}
        \int_U \chi(x) u(t, \cdot) d\mu \leq \int_U \chi(x) u_0d\mu + C_2t.
    \end{align*}
    Indeed, it holds since $u_j(t, \cdot) \rightarrow u(t, \cdot)$ in $cap_{\theta+2\lambda_{j_0}\omega_X}$ and $d\mu_j \rightarrow d\mu$ weakly as $j \rightarrow \infty$ for a.e. $t \in (0, T)$ and for all $j_0 > 0$. \\
    \indent
    Since $u(t, x)$ is bounded, there exist a sequence of positive numbers $\{t_k\}_{k = 1}^{\infty}$ and a function $w \in PSH(X, \theta)$ such that $t_k \rightarrow 0$ as $k \rightarrow \infty$ and for a.e. $x \in X$ with respect to $\theta^n$, $u(t_k, x) \rightarrow w(x)$ as $k \rightarrow \infty$. We claim that for $\mu$-a.e. $x \in U$, $u(t_k, x) \rightarrow w(x)$ as $k \rightarrow \infty$. Indeed, we have $\sup_k \lVert u(t_k, \cdot)\rVert_{L^2(U, d\mu)} < \infty$. Hence there exists a function $W \in L^2(U, d\mu)$ such that by passing to a subsequence, $u(t_k, \cdot) \rightarrow W$ weakly as $k \rightarrow \infty$. It follows from the proof of \cite[Lemma 2.1]{KN23} that again by passing to a subsequence,
    \begin{align*}
        \lim_{k \rightarrow \infty}\int_U u(t_k, \cdot)d\mu = \int_U W d\mu = \int_U wd\mu.
    \end{align*}
    Let us denote this subsequence by $\{t_k\}_{k = 1}^{\infty}$. Since $d\mu = e^{-c}(\theta+dd^c\psi)^n$ and $\psi$ is bounded, it follows from the proof of \cite[Corollary 2.2]{KN23} that
    \begin{align*}
        \lim_{k \rightarrow \infty}\int_U \lvert u(t_k, \cdot) - w\rvert d\mu = 0.
    \end{align*}
    Hence we have $w(x) \leq u_0(x)$ for $\mu$-a.e. $x \in U$. Since $U$ is chosen arbitrarily, $w \leq u_0$ $\mu$-a.e. in $X$. Therefore $\lim_{t \rightarrow 0+}u(t, \cdot) = u_0$ in $L^1(X, d\mu)$.
\end{proof}
\begin{remark}\label{admissible pair}
    While proving the boundary condition in Lemma \ref{main lemma}, we actually obtained that $\lim_{t \rightarrow 0+}u(t, x) \geq u_0(x)$ for all $x \in X$ and $\lim_{t \rightarrow 0+}u(t, x) \leq u_0(x)$ for $\mu$-a.e. in $X$. Hence, we can apply the results in \cite[Section 5.2]{Ka25}. More precisely, let us say that the pair $(\mu, u_0)$ of a measure $\mu$ and a function $u_0$ is admissible if for any $v \in PSH(X, \theta) \cap L^{\infty}(X)$, $v \leq u_0$ $d\mu$-a.e. in $X$ implies that $v \leq u_0$ in $X$. If $(\mu, u_0)$ is admissible, then the proof of Lemma \ref{main lemma} implies that the solution $u$ that we obtained actually satisfies $\lim_{t \rightarrow 0+}u(t, x) = u_0(x)$ for all $x \in X$.
\end{remark}
    We now define a general criterion to solve the Monge-Amp\`ere equation for the condition in Lemma \ref{main lemma} above. 
\begin{definition}
    Let $\mathcal{H}_{\theta}$ be a class of finite positive Borel measures $\mu$ on $X$ such that for all Borel subsets $K \subset X$,
    \begin{align*}
        \mu(K) \leq C(cap_{\theta}(K))^{1+\alpha}
    \end{align*}
    for some $C > 0$ and $\alpha > 0$.
\end{definition}
For example, if $\mu = g\theta^n$ with $0 \leq g \in L^p(X, \theta^n)$ for some $p > 1$, then $\mu \in \mathcal{H}_{\theta}$. Also, under the assumption that $\theta$ is K\"ahler, it follows from \cite[Corollary 1.2]{DNS14} that if $\mu$ is a Monge-Amp\`ere measure of some H\"older continuous $\theta$-plurisubharmonic function, then $\mu \in \mathcal{H}_{\theta}$. With our original assumption on $\theta$ that it is closed, semi-positive and big, it follows from \cite[Theorem 2.1]{EGZ09} that if $\mu \in \mathcal{H}_{\theta}$, then there exists $\psi \in PSH(X, \theta) \cap C(X)$ satisfying
\begin{align*}
    (\theta+dd^c\psi)^n = d\mu \text{ in }X.
\end{align*}
Hence the next lemma follows from Lemma \ref{main lemma}.
\begin{lemma}\label{H theta}
    Let $u_0 \in PSH(X, \omega_0) \cap L^{\infty}(X)$. If $\mu \in \mathcal{H}_{\theta}$, then there exists $u \in \mathcal{P}(X_T, \omega_t) \cap L^{\infty}(X_T)$ satisfying
    \begin{align*}
        \begin{cases}
            & dt \wedge (\omega_t+dd^cu)^n = e^{\partial_tu+F(t, x, u)}dt \wedge d\mu \text{ in $X_T$}, \\
            & \lim_{t \rightarrow 0+}u(t, \cdot) = u_0 \text{ in } L^1(X, d\mu).
        \end{cases}
    \end{align*}
\end{lemma}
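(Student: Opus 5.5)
The plan is to reduce Lemma \ref{H theta} to Lemma \ref{main lemma}. The only thing that needs checking is that its hypothesis holds. That hypothesis asks for a bounded $\psi \in PSH(X,\theta)$ with $(\theta+dd^c\psi)^n = d\mu$. Once such a $\psi$ is available, Lemma \ref{main lemma} directly produces $u \in \mathcal{P}(X_T,\omega_t)\cap L^\infty(X_T)$ solving the flow with $\lim_{t\to 0+}u(t,\cdot)=u_0$ in $L^1(X,d\mu)$. This is exactly the conclusion of Lemma \ref{H theta}.

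\textbf{Step 1: producing $\psi$.} I will use the result of \cite[Theorem 2.1]{EGZ09}, as recalled just before the statement. It applies to a smooth closed semi-positive form $\theta$ with big cohomology class and a measure $\mu\in\mathcal{H}_\theta$, and it has two requirements.

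First, the mass normalization $\mu(X)=\int_X\theta^n$ must hold. This is part of the standing assumptions on $\mu$ in the introduction.

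Second, the domination $\mu(K)\le C\,cap_\theta(K)^{1+\alpha}$ must hold. This is exactly the definition of $\mathcal{H}_\theta$.

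The conclusion is $\psi\in PSH(X,\theta)\cap C(X)$ with $(\theta+dd^c\psi)^n=d\mu$. Since $X$ is compact, continuity gives boundedness, so $\psi\in L^\infty(X)$. If needed, normalize $\sup_X\psi=0$ by adding a constant; this does not change the Monge-Amp\`ere measure.

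\textbf{Step 2: apply Lemma \ref{main lemma}.} Everything else in its hypotheses is inherited unchanged: the standing assumptions on $\omega_t$, $\theta$, $B$ and $F$, and $u_0\in PSH(X,\omega_0)\cap L^\infty(X)$. So the lemma gives the solution.

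\textbf{Main obstacle.} The only delicate point is matching the hypotheses of \cite{EGZ09} exactly: semi-positive and big rather than K\"ahler, and the precise form of the capacity domination. The capacity used there is the one relative to $\theta$, and the definition of $\mathcal{H}_\theta$ is phrased with that same $cap_\theta$. If a slightly different capacity convention were used, I would compare the two capacities up to constants. This only changes $C$, not the exponent $1+\alpha$.
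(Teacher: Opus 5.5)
Your proposal is correct and follows the paper's own argument. The paper likewise invokes \cite[Theorem 2.1]{EGZ09} to obtain a continuous, hence bounded, $\psi\in PSH(X,\theta)$ with $(\theta+dd^c\psi)^n=d\mu$ (you rightly note that this uses the standing normalization $\mu(X)=\int_X\theta^n$), and then applies Lemma \ref{main lemma}.
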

We now prove a partial result of the Theorem \ref{intro_main result 1}.
\begin{lemma}\label{main theorem}
       Let $u_0 \in PSH(X, \omega_0) \cap L^{\infty}(X)$. Assume that there exist a H\"older continuous $\omega_X$-psh function $\phi$ and a constant $C > 0$ satisfying
    \begin{align*}
        d\mu \leq C(\omega_X+dd^c\phi)^n \text{ in $X$}.
    \end{align*}
    Then, there exists $u \in \mathcal{P}(X_T, \omega_t) \cap L^{\infty}(X_T)$ satisfying
    \begin{align}\label{4.9}
        \begin{cases}
            &dt \wedge (\omega_t+dd^cu)^n = e^{\partial_tu+F(t, x, u)}dt \wedge d\mu \text{ in $X_T$}, \\
            &\lim_{t \rightarrow 0+}u(t, \cdot) = u_0 \text{ in } L^1(X, d\mu). 
        \end{cases}
    \end{align}
\end{lemma}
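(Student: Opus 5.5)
The plan is to reduce to Lemma \ref{H theta}, i.e.\ to show that $\mu \in \mathcal{H}_{\theta}$. The paper has already noted that, when the reference form is K\"ahler, \cite[Corollary 1.2]{DNS14} places Monge-Amp\`ere measures of H\"older continuous potentials in the corresponding $\mathcal{H}$-class. Our $\phi$ is $\omega_X$-psh and $\omega_X$ is K\"ahler, so this gives constants $C', \alpha > 0$ with
\begin{align*}
    (\omega_X+dd^c\phi)^n(K) \leq C'\,(cap_{\omega_X}(K))^{1+\alpha}
\end{align*}
for every Borel set $K \subset X$. Combined with $d\mu \leq C(\omega_X+dd^c\phi)^n$, we get
\begin{align*}
    \mu(K) \leq CC'\,(cap_{\omega_X}(K))^{1+\alpha}.
\end{align*}

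The remaining task is to pass from $cap_{\omega_X}$ to $cap_{\theta}$. Since $\theta$ is only semi-positive and big, there is no inequality $cap_{\omega_X} \lesssim cap_{\theta}$ in general. I would try two routes.

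\emph{First route.} If $\theta$ were K\"ahler, compactness would give $\theta \geq c\,\omega_X$, and the capacities $cap_{c\omega_X}$ and $cap_{\omega_X}$ are comparable. Concretely, for $\psi \in PSH(X, c\omega_X)$ with $-1 \leq \psi \leq 0$, the function $\psi/c$ is $\omega_X$-psh and bounded by $1/c$. Rescaling then shows $cap_{\omega_X} \leq C''\, cap_{\theta}$. This gives $\mu \in \mathcal{H}_{\theta}$, and Lemma \ref{H theta} finishes the proof.

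\emph{Second route (general big semi-positive $\theta$).} I would avoid $cap_{\theta}$ altogether and instead produce directly a bounded $\theta$-psh solution $\psi$ of $(\theta+dd^c\psi)^n = \mu$, which is exactly what Lemma \ref{main lemma} needs. The steps would be:
\begin{itemize}
    \item Use the assumption $\omega_t \leq B\theta$ at $t=0$. Because the class $\{\theta\}$ is big and $\theta$ is semi-positive, there is a quasi-psh function $\rho$ with analytic singularities and $\theta + dd^c\rho \geq \varepsilon\,\omega_X$.
    \item For a set $K$ away from the singular locus of $\rho$, the function $\varepsilon\psi + \rho$ (after translation) is $\theta$-psh whenever $\psi$ is $\omega_X$-psh. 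This lets one compare capacities off a neighborhood of $\{\rho = -\infty\}$.
    \item Near that analytic set, use the fact that $\mu$ is dominated by the Monge-Amp\`ere measure of a continuous potential, so $\mu$ puts no mass on pluripolar sets and only small mass on small neighborhoods of them.
\end{itemize}
Then I would apply \cite[Theorem 2.1]{EGZ09} in its version with capacity domination on the ample locus, or run a Ko{\l}odziej-type $L^\infty$ estimate directly with $cap_{\theta}$ replaced by $cap_{\omega_X}$. Either way the goal is the bounded (indeed continuous) $\psi$ required by Lemma \ref{main lemma}.

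The normalization $\mu(X) = \int_X\theta^n$ is already assumed, so the mass condition for solvability holds. Once $\psi$ is found, Lemma \ref{main lemma} gives $u \in \mathcal{P}(X_T, \omega_t) \cap L^{\infty}(X_T)$ satisfying (\ref{4.9}).

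The main obstacle is the capacity comparison in the non-K\"ahler case, i.e.\ establishing $\mu(K) \leq C(cap_{\theta}(K))^{1+\alpha}$ or an equivalent solvability statement when $\theta$ degenerates. My first attempt there would be the Kodaira-lemma twisting $\varepsilon\psi+\rho$ combined with the domination by $(\omega_X+dd^c\phi)^n$.
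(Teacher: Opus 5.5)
\textbf{There is a genuine gap.} Your first route only covers K\"ahler $\theta$, but the statement is for a semi-positive big $\theta$. Your second route stops exactly at the step that carries the proof.

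The premise behind the second route is also off. You say there is no inequality $cap_{\omega_X}\lesssim cap_\theta$ in general. A linear comparison does fail, but the power comparison $cap_{\omega_X}\le C_1\,cap_\theta^{1/n}$ holds for semi-positive big $\theta$ (\cite[Theorem 4.6]{Din15}, \cite[Corollary 12.4]{GZ17}).

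The $1/n$ loss is then absorbed by the second ingredient you are missing. For a H\"older continuous $\omega_X$-psh $\phi$, the measure $\mu$ satisfies $\mu(K)\le C_\tau\,cap_{\omega_X}(K)^{1+\tau}$ for \emph{every} $\tau>0$ (\cite[Corollary 2.2]{KN18}). You only extracted some fixed $\alpha>0$ from \cite{DNS14}, which is not enough. Taking $\tau>n$ gives $\mu(K)\le C\,cap_\theta(K)^{(1+\tau)/n}$ with $(1+\tau)/n>1$. Hence $\mu\in\mathcal{H}_\theta$, and Lemma \ref{H theta} finishes. That is the paper's entire proof.

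Your Kodaira-twisting idea can be completed, but only quantitatively, and again only with a large exponent. Take $\rho\le 0$ with $\theta+dd^c\rho\ge\varepsilon\omega_X$, $\varepsilon\le 1$, and $\psi\in PSH(X,\omega_X)$ with $-1\le\psi\le 0$.
\begin{itemize}
\item The function $\max(\rho+\varepsilon\psi,-M-1)/(M+1)$ is a competitor for $cap_\theta$, since constants are $\theta$-psh when $\theta\ge 0$.
\item This gives $cap_{\omega_X}(K)\le\varepsilon^{-n}(M+1)^n\,cap_\theta(K)$ for $K\subset\{\rho>-M\}$.
\item Also $cap_{\omega_X}(\{\rho<-M\})\lesssim 1/M$.
\end{itemize}
``Small mass near the pluripolar set'' must then mean the quantitative bound $\mu(\{\rho<-M\})\le C_\tau(C/M)^{1+\tau}$. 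Balancing the two pieces with $M\approx cap_\theta(K)^{-1/(n+1)}$ yields $\mu(K)\lesssim cap_\theta(K)^{(1+\tau)/(n+1)}$. This lies in $\mathcal{H}_\theta$ only when $\tau>n$.

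So in either route the indispensable input is the arbitrary-exponent volume--capacity estimate for Monge--Amp\`ere measures of H\"older potentials. With a single fixed $\alpha$, neither route closes.

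Appealing to an unspecified ``ample-locus version'' of the existence theorem does not substitute for this. Neither does a Ko{\l}odziej-type estimate with $cap_{\omega_X}$ in place of $cap_\theta$. Since $cap_\theta\le C\,cap_{\omega_X}$, domination by $cap_{\omega_X}$ is the weaker hypothesis, and the $L^\infty$ estimate for $\theta$-psh solutions needs domination by $cap_\theta$.
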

\begin{proof}
        Let us fix $\tau > n$. By \cite[Corollary 2.2]{KN18}, there exists $C_{\tau} > 0$ such that for every Borel subset $K \subset X$,
    \begin{align}\label{H(tau)}
        \mu(K) \leq C_{\tau}[cap_{\omega_X}(K)]^{1+\tau}.
    \end{align}
    It follows from \cite[Theorem 4.6]{Din15} (see also \cite[Corollary 12.4]{GZ17}) that $cap_{\omega_X} \leq C_1cap_{\theta}^{1/n}$ for some $C_1 > 0$. Therefore we have
    \begin{align*}
        \mu(K) \leq C_1C_{\tau}[cap_{\theta}(K)]^{\frac{1+\tau}{n}} = C_{\tau'}[cap_{\theta}(K)]^{1+\tau'}
    \end{align*}
    where $C_{\tau'} := C_1C_{\tau}$ and $\tau' := \frac{1+\tau}{n}-1 > 0$ since $\tau > n$. Since it implies that $\mu \in \mathcal{H}_{\theta}$, the result follows from Lemma \ref{H theta}.
\end{proof}
\subsection{H\"older continuous solution}
We now show that the solution obtained from Lemma \ref{main theorem} is locally H\"older continuous on $\rm{Amp}(\theta)$. We follow the proof of \cite[Theorem D]{DDGPZ14}, combining with the results in \cite{KN18}.
\begin{theorem}\label{Holder Main}
    Let $u_0 \in PSH(X, \omega_0) \cap L^{\infty}(X)$. Assume that there exist a H\"older continuous $\omega_X$-psh function $\phi$ and a constant $C > 0$ satisfying
    \begin{align*}
        d\mu \leq C(\omega_X+dd^c\phi)^n \text{ in $X$}.
    \end{align*}
    Then, there exists $u \in \mathcal{P}(X_T, \omega_t) \cap L^{\infty}(X_T)$ satisfying
    \begin{align*}
        \begin{cases}
            &dt \wedge (\omega_t+dd^cu)^n = e^{\partial_tu+F(t, x, u)}dt \wedge d\mu \text{ in $X_T$}, \\
            &\lim_{t \rightarrow 0+}u(t, \cdot) = u_0 \text{ in } L^1(X, d\mu), \\
            &u \text{ is locally H\"older continuous on } {\rm{Amp}}(\theta) \text{ for all } t \in (0, T),
        \end{cases}
    \end{align*}
    where the H\"older exponent of $u(t, \cdot)$ does not depend on $t$. Moreover, $u$ is jointly continuous on $(0, T) \times {\rm{Amp}}(\theta)$.
\end{theorem}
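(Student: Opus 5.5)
We take $u$ to be the solution produced by Lemma \ref{main theorem}, i.e.\ the limit of the approximants $u_j$ from Lemma \ref{main lemma}. It then remains to prove local H\"older continuity of each slice $u(t,\cdot)$ on ${\rm Amp}(\theta)$, with an exponent independent of $t$, and joint continuity. The idea, as in \cite[Theorem D]{DDGPZ14}, is to turn the slice equation into an elliptic Monge-Amp\`ere equation with a controlled right-hand side, for which H\"older regularity is known.

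\textbf{Step 1: reduce to an elliptic equation.} By Lemma \ref{lipschitzness} we have $|\partial_t u|\le \kappa/t$, and by Lemma \ref{concavity1} the map $t\mapsto u(t,x)+C\log t$ is concave; both pass to the limit from the $u_j$. Semi-concavity gives, for every $t\in J\Subset(0,T)$, one-sided derivatives $\partial_t^{\pm}u(t,x)$ that are bounded by $\kappa/t$ and defined for all $x$. Using Lemma \ref{fubini type lemma} together with the convergence Lemma \ref{convergence}, we show that for every $t\in(0,T)$, not only almost every $t$,
\[
(\omega_t+dd^c u(t,\cdot))^n = e^{\partial_t^{-}u(t,\cdot)+F(t,\cdot,u(t,\cdot))}\,d\mu = g_t\,d\mu ,
\]
with $0\le g_t\le e^{\kappa/t+M_F}$. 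To get this at every $t$, we approximate $t$ by good times $s\uparrow t$, use continuity of $s\mapsto u(s,\cdot)$ in capacity (from the Lipschitz bound), and use monotonicity of the difference quotients from concavity. Consequently $\nu_t:=g_t\,d\mu\le C_t(\omega_X+dd^c\phi)^n$, and by \cite[Corollary 2.2]{KN18} together with the comparison of capacities used in Lemma \ref{main theorem}, $\nu_t(K)\le C'_t\,cap_\theta(K)^{1+\tau'}$ for all Borel $K$.

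\textbf{Step 2: H\"older estimate for the elliptic equation.} Since $\theta\le\omega_t\le B\theta$, the class of $\omega_t$ is big and semi-positive, with ample locus containing ${\rm Amp}(\theta)$. Following the proof of \cite[Theorem D]{DDGPZ14}, with the local H\"older criterion of \cite{KN18} replacing the $L^p$ density hypothesis, we obtain that a bounded $\omega_t$-psh solution of $(\omega_t+dd^cv)^n=\nu_t$ is locally H\"older on ${\rm Amp}(\theta)$. Concretely, we fix a Kodaira decomposition $\theta \ge \varepsilon\omega_X+dd^c\rho$ with $\rho$ quasi-psh, smooth on ${\rm Amp}(\theta)$ and $\to-\infty$ on its complement. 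We then compare $u(t,\cdot)$ with its regularization (a Demailly-type $\rho_\delta u$ or a Kiselman-type envelope) corrected by $\varepsilon\rho$, and estimate $\int (\rho_\delta u - u)\,d\nu_t$ using that $(\omega_X+dd^c\phi)^n$ has H\"older-continuous potential, which by \cite{DNS14,KN18} gives an estimate of the form $\int|v-w|\,d\nu_t\lesssim \|v-w\|_{L^1}^{\gamma}$. The resulting exponent depends only on $n$, the H\"older exponent of $\phi$, and $\tau$, not on the constant $C'_t$; hence it is independent of $t$. The H\"older constant does blow up as $t\to 0$ or as one approaches the boundary of ${\rm Amp}(\theta)$, but that is permitted. \textbf{This step is the main obstacle:} one must check that in the stability estimate the exponent really is uniform while only the multiplicative constants carry the dependence on $\sup g_t$ and $\|u\|_\infty$.

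\textbf{Step 3: joint continuity.} On $J\times K$ with $K\Subset{\rm Amp}(\theta)$, the slices $u(t,\cdot)$ are equi-H\"older, with constants uniform for $t\in J$ since $g_t\le e^{\kappa/\inf J+M_F}$ there. Also $|u(t,x)-u(s,x)|\le (\kappa/\inf J)|t-s|$. Combining these,
\[
|u(t,x)-u(s,y)|\le C|t-s|+C|x-y|^\alpha ,
\]
which gives joint continuity on $(0,T)\times{\rm Amp}(\theta)$.
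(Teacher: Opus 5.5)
Your proposal is correct and follows essentially the same route as the paper: take the solution of Lemma \ref{main theorem}, treat each slice as an elliptic equation whose density against $\mu$ is bounded by $e^{\kappa/t+M_F}$, and run the scheme of \cite[Theorem D]{DDGPZ14}. That scheme uses the Kiselman--Legendre transform of $\rho_\delta u$ corrected by $U_0$ with $\theta+dd^cU_0\geq\varepsilon_0\omega_X$, the bound $\|\rho_\delta u-u\|_{L^1(d\mu)}\leq C_1\delta^{\alpha_1}$ coming from the H\"older potential $\phi$, and the stability estimate of \cite[Proposition 5.3]{GZ12} with $\gamma=\tau'/((n+1)\tau'+n)$, so the exponent is independent of $t$ and joint continuity follows from the Lipschitz bound in $t$.

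The only difference is that you derive the slice equation at every $t$ via $\partial_t^-u$ and semi-concavity, whereas the paper works at a.e.\ $t$ and extends to all $t$ using the local uniform Lipschitz continuity of $u$ in $t$; both work.
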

\begin{remark}
    If $\theta$ is K\"ahler, then $\rm{Amp}(\theta) = X$. Therefore in this case, we obtain a solution $u(t, \cdot)$ which is locally H\"older continuous on $X$ for all $t \in (0, T)$.
\end{remark}
\begin{remark}
    The modulus of continuity of the solution can be also obtained from the recent result of \cite{DDP25}.
\end{remark}
\begin{proof}[Proof of Theorem \ref{Holder Main}]
    \indent It follows from Lemma \ref{main theorem} that there exists $u \in \mathcal{P}(X_T, \omega_t) \cap L^{\infty}(X_T)$ satisfying
    \begin{align*}
        \begin{cases}
            &dt \wedge (\omega_t+dd^cu)^n = e^{\partial_tu+F(t, x, u)}dt \wedge d\mu \text{ in $X_T$}, \\
            &\lim_{t \rightarrow 0+}u(t, \cdot) = u_0 \text{ in } L^1(X, d\mu).
        \end{cases}
    \end{align*}
    Let us fix $t \in (0, T)$ satisfying
    \begin{align*}
        (\omega_t+dd^cu(t, \cdot))^n = e^{\partial_tu(t, \cdot)+F(t, \cdot ,u)}d\mu \text{ in $X_T$}.
    \end{align*}
    We now show that $u(t, \cdot)$ is locally H\"older continuous on $\rm{Amp}(\theta)$. We proceed as in \cite[Theorem D]{DDGPZ14}. We may assume that $-C_0 \leq u(t, \cdot) \leq 0$ for some constant $C_0 > 0$. Let
\begin{align*}
    u(t, \cdot) \mapsto \rho_{\delta}u(t, \cdot)
\end{align*}
denote the regularization operator defined in \cite{DDGPZ14}. For some $\delta_0 > 0$ and $K > 0$, a map $s \mapsto \rho_{s}u(t, \cdot)+Ks^2$ is increasing for $0 \leq s \leq \delta_0$. We fix such $\delta_0$ and $K$. \\
\indent Let us consider the Kiselman-Legendre transform
\begin{align*}
    U_{c, \delta}(t, \cdot) := \inf_{s \in (0, \delta]}\{\rho_s u(t, \cdot)+Ks^2-c\log(s/\delta)\},
\end{align*}
where $0 < \delta < \delta_0$ and $c > 0$ will be chosen later. We have 
\begin{align*}
    u(t, \cdot) \leq U_{c, \delta}(t, \cdot) \leq \rho_\delta u(t, \cdot) + K\delta^2.
\end{align*}
It follows from the estimate in \cite[Lemma 2.1]{DDGPZ14} that 
\begin{align*}
    \omega_t+dd^cU_{c, \delta}(t, \cdot) \geq -(Ac+K\delta^2)\omega_X
\end{align*}
for some constant $A > 0$. Since $\theta$ is a big form, there exists a $\theta$-psh function $U_0$ on $X$ such that $\theta+dd^cU_0 \geq \varepsilon_0\omega_X$ for some small constant $\varepsilon_0 > 0$. We may assume that $U_0 \leq 0$. \\
\indent A function $u_{c, \delta}$ defined by
\begin{align*}
    u_{c, \delta}(t, \cdot) := \frac{Ac+K\delta^2}{\varepsilon_0}U_0 + \left(1-\frac{Ac+K\delta^2}{\varepsilon_0}\right)U_{c, \delta}(t, \cdot)
\end{align*}
is $\omega_t$-plurisubharmonic on $X$. It follows from \cite[Theorem 4.3]{DDGPZ14} (see also \cite[Lemma 3.3]{DN14}) that there exist $C_1 > 0$ and $0 < \alpha_1 < 1$ such that
\begin{align} \label{L1mu}
    \lVert \rho_{\delta}u(t, \cdot) - u(t, \cdot)\rVert_{L^1(d\mu)} \leq C_1\delta^{\alpha_1}.
\end{align}
\indent We fix $0 < \delta < \delta_0$ and define
\begin{align*}
    \alpha = \min\left\{\frac{\tau'}{(n+1)\tau'+n}, \alpha_1\right\}.
\end{align*}
Let us choose $c > 0$ such that
\begin{align*}
    Ac+K\delta^2 = \varepsilon_0\delta^{\alpha_1\alpha}.
\end{align*}
From now, we denote by
\begin{align*}
    u_{\delta}(t, \cdot) := u_{c, \delta}(t, \cdot).
\end{align*}
Since $U_0 \leq 0 \leq u(t, \cdot) + C_0$, we have 
\begin{align*}
    u_{\delta}(t, \cdot) - u(t, \cdot) &= \delta^{\alpha_1\alpha}(U_0 - u(t, \cdot))+(1-\delta^{\alpha_1\alpha})(U_{c, \delta}(t, \cdot) - u(t, \cdot)) \\
    & \leq C_0\delta^{\alpha_1\alpha} + (1-\delta^{\alpha_1\alpha})(\rho_{\delta}u(t, \cdot) - u(t, \cdot)+K\delta^2). 
\end{align*}
Moreover, since $u(t, \cdot) \leq 0$, we get $\rho_{\delta}u(t, \cdot) \leq 0$. By shrinking $\delta_0$ if necessary, we have
\begin{align*}
    U_{c, \delta}(t, \cdot) \leq K\delta^2 \leq C_0\delta^{\alpha_1\alpha}    
\end{align*}
for all $0 < \delta < \delta_0$. Therefore we have
\begin{align*}
    u_{\delta}(t, \cdot) = \delta^{\alpha_1\alpha}U_0 + (1-\delta^{\alpha_1\alpha})U_{c, \delta}(t, \cdot) \leq C_0\delta^{\alpha_1\alpha},
\end{align*}
thus $u_{\delta}(t, \cdot) - C_0\delta^{\alpha_1\alpha} \leq 0$.
It follows from \cite[Proposition 5.3]{GZ12} that for $\gamma = \frac{\tau'}{(n+1)\tau'+n}$ and $d\nu = e^{\partial_tu(t, \cdot)+F(t, \cdot ,u)}d\mu$,
\begin{align*}
    \sup_{X}(u_{\delta}(t, \cdot) - u(t, \cdot)) &\leq B_0\lVert \max(u_{\delta}(t, \cdot) - u(t, \cdot) - C_0\delta^{\alpha_1\alpha}, 0)\rVert_{L^1(d\nu)}^{\gamma} + C_0\delta^{\alpha_1\alpha} \\
    &\leq B_0 \lVert \rho_{\delta}u(t, \cdot)- u(t, \cdot) + K\delta^2 \rVert_{L^1(d\nu)}^{\gamma}+C_0\delta^{\alpha_1\alpha}
\end{align*}
for some constant $B_0 > 0$ which depends on $\gamma$ and the uniform norm of $u(t, \cdot)$. By (\ref{L1mu}), the last estimate yields
\begin{align}\label{parameter bound}
    \sup_{X}(u_{\delta}(t, \cdot) - u(t, \cdot)) \leq C_2\delta^{\alpha_1\alpha},
\end{align}
for some $C_2 > 0$. Indeed, for $\kappa$ obtained in (\ref{estimate on first derivative}), we have
\begin{align*}
    &B_0\left(\int_{X}\lvert \rho_{\delta}u(t, \cdot)+K\delta^2-u(t, \cdot)\rvert e^{\partial_tu(t, \cdot)+F(t, \cdot ,u)}d\mu\right)^\gamma + C_0\delta^{\alpha_1\alpha} \\
    &\quad\leq B_0e^{\gamma(t^{-1}\kappa+\lVert F(t, x, u(t, x))\rVert_{L^{\infty}(X_T)})}(\lVert \rho_{\delta}u(t, \cdot)-u(t, \cdot)\rVert_{L^1(d\mu)}^\gamma+K^{\gamma}\delta^{2\gamma}) + C_0\delta^{\alpha_1\alpha} \\
    &\quad \leq C_2\delta^{\alpha_1\alpha}.
\end{align*}
where $C_2 := B_0e^{\gamma(t^{-1}\kappa+\lVert F(t, x, u(t, x))\rVert_{L^{\infty}(X_T)})}(C_1+K^{\gamma})+C_0$.
\\
\indent Next, the inequality (\ref{parameter bound}) yields a uniform lower bound on the parameter $\Tilde{s} = \Tilde{s}(z)$ which attains the infimum in the definition of $u_{\delta, t}(z)$ for a fixed $z \in \rm{Amp}(\theta)$. Indeed, by the definition of $u_{\delta, t}$,
\begin{align*}
    u_{\delta, t}(z) - u(t, z)   &=  \delta^{\alpha_1\alpha}(U_0 - u(t, z))\\ &\quad + (1-\delta^{\alpha_1\alpha})(\rho_{\Tilde{s}(z)}(z)  +K(\Tilde{s}(z))^2-u(t, z)-c\log(\Tilde{s}(z)/\delta)).
\end{align*}
Since $\rho_{\Tilde{s}(z)}(z)+K(\Tilde{s}(z))^2\geq u(t, z)$ and $u(t, \cdot) \leq 0$, it follows from (\ref{parameter bound}) that
\begin{align*}
    c(1-\delta^{\alpha_1\alpha})\log(\Tilde{s}(z)/\delta) \geq \delta^{\alpha_1\alpha}(U_0(z) - C_2).
\end{align*}
As $c = \varepsilon_0 A^{-1}\delta^{\alpha_1\alpha} - KA^{-1}\delta^2$, by choosing $\delta \leq \min \{\delta_0, (\varepsilon_0/2K)^{1/(2-\alpha_1\alpha)}\}$, we get $c \geq \frac{1}{2}\varepsilon_0A^{-1}\delta^{\alpha_1\alpha}$ and therefore
\begin{align*}
    \Tilde{s}(z) \geq \delta \exp\left(\frac{2A(U_0(z)-C_2)}{\varepsilon_0(1-\delta_0^{\alpha_1\alpha})}\right).
\end{align*}
Let us denote 
\begin{align*}
    \eta(z) := \exp\left(\frac{2A(U_0(z) - C_2)}{\varepsilon_0(1-\delta_0^{\alpha_1\alpha})}\right).
\end{align*}
\indent Now, fix $z \in \rm{Amp}(\theta)$. Since $\Tilde{s}(z) \geq \delta\eta(z)$ and $s \mapsto \rho_su(t, \cdot)+Ks^2$ is increasing, we get
\begin{equation}\label{4.10}
    \begin{aligned}
        \rho_{\delta\eta(z)}u(t, z) - u(t, z) &\leq \rho_{\Tilde{s}(z)}u(t, z) + K(\Tilde{s}(z))^2-u(t, z) \\ 
    &\leq U_{c, \delta, t}(z) - u(t, z)\\
    &\leq \frac{1}{1-\delta^{\alpha_1\alpha}}(u_{\delta, t}(z) - \delta^{\alpha_1\alpha}U_0(z) - u(t, z)).
    \end{aligned}
\end{equation}
It follows from (\ref{parameter bound}) and (\ref{4.10}) that
\begin{align*}
    \rho_{\delta\eta(z)}u(t, z) - u(t, z) \leq (1-\delta_0^{\alpha_1\alpha})^{-1}\delta^{\alpha_1\alpha}(C_2-U_0).
\end{align*}
We now replace $\delta$ by $\delta /\eta(z)$. For $\delta/\eta(z) \leq \delta_0$, 
\begin{align*}
    \rho_{\delta}u(t, z) - u(t, z) &\leq (1-\delta_0^{\alpha_1\alpha})^{-1}\delta^{\alpha_1\alpha}(C_2-U_0(z)) (\eta(z))^{\alpha_1\alpha} \\
    &= (1-\delta_0^{\alpha_1\alpha})^{-1}(C_2-U_0(z))\exp\left(\frac{2A\alpha_1\alpha(U_0(z)-C_2)}{\varepsilon_0(1-\delta_0^{\alpha_1\alpha})}\right)\delta^{\alpha_1\alpha}.
\end{align*}
Since $U_0$ is locally bounded from below on $\rm{Amp}(\theta)$, it implies local H\"older continuity of $u(t, \cdot)$ on $\rm{Amp}(\theta)$. \\
\indent Finally, note that the H\"older exponent $\alpha_1\alpha$ does not depend on $t$. The joint continuity of $u$ on $X_T$ and the local H\"older continuity of $u(t, \cdot)$ on ${\rm{Amp}}(\theta)$ for all $t \in (0, T)$ follows from the local uniform Lipschitzness of $u$ in $(0, T)$.
\end{proof}
\section{Uniqueness Result}
In this section, we give a proof of Theorem \ref{intro_main result 2}. We proceed as in \cite[Section 3]{Ka25b}.
\subsection{Mixed type inequalities} We need the following lemmas about the mixed type inequalities. 
\begin{lemma}\label{mixed type inequality}
    Assume that $\varphi, \psi \in PSH(X, \theta) \cap L^{\infty}(X)$ satisfy
    \begin{align*}
        (\theta+dd^c\varphi)^n \geq f(\theta+dd^c\psi)^n
    \end{align*}
    for some $f \in L^1(X, (\theta+dd^c\psi)^n)$. Then, we have
    \begin{align*}
        (\theta+dd^c\varphi) \wedge (\theta+dd^c\psi)^{n-1} \geq f^{1/n}(\theta+dd^c\psi)^n.
    \end{align*}
\end{lemma}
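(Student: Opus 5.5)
The plan is to reduce to the classical mixed Monge--Amp\`ere inequality of Ko{\l}odziej and Dinew, in the form valid for big semi-positive classes (see e.g.\ \cite[Proposition 10.? ]{GZ17} style statements, or Dinew's result \cite{Din15}): if $(\theta+dd^c\varphi)^n\geq f\,\nu$ and $(\theta+dd^c\psi)^n\geq g\,\nu$ for a positive measure $\nu$ and nonnegative densities $f,g\in L^1(\nu)$, then
\begin{align*}
(\theta+dd^c\varphi)\wedge(\theta+dd^c\psi)^{n-1}\geq f^{1/n}g^{(n-1)/n}\,\nu .
\end{align*}
Taking $\nu=(\theta+dd^c\psi)^n$ and $g\equiv 1$ gives the statement immediately. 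So the real work is to justify this mixed inequality for bounded $\theta$-psh functions with $\theta$ only semi-positive and big, and with $\nu$ possibly singular.

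I would prove it in two steps. First, the local version: on a coordinate ball $U$ where $\theta=dd^c\phi_U$ with $\phi_U$ smooth psh, the functions $\phi_U+\varphi$ and $\phi_U+\psi$ are bounded psh, and for bounded psh functions $a,b$ with $(dd^ca)^n\geq f\nu$, $(dd^cb)^n\geq g\nu$ one has $dd^ca\wedge(dd^cb)^{n-1}\geq f^{1/n}g^{(n-1)/n}\nu$; this is Ko{\l}odziej's/Dinew's local mixed inequality (proved via decomposing $\nu$ into its absolutely continuous part with respect to Lebesgue measure, where one uses G{\aa}rding/solving Dirichlet problems with $L^p$ densities and stability, and the singular part, handled by Dinew's argument using approximation of $\nu$ by measures $\nu\llcorner K$ on compacts of small Lebesgue measure). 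Second, globalize: positivity of a measure is local, so covering $X$ by such balls and restricting the inequality $(\theta+dd^c\varphi)^n\geq f(\theta+dd^c\psi)^n$ to each ball yields the local hypothesis there, and the local conclusion patches to the global one.

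An alternative route I would try if the singular-measure case of the local inequality is not available in the literature in the needed generality: since here $\nu=(dd^cb)^n$ itself, approximate. Write $f=\lim f_k$ with $f_k$ bounded, and for the pair, use the comparison-type argument: for $s>0$ the set $\{a<sb+c\}$-type estimates. More concretely, I expect one can show the inequality on the set $\{f\ge \lambda\}$ for each constant $\lambda>0$ by proving $dd^ca\wedge(dd^cb)^{n-1}\geq \lambda^{1/n}(dd^cb)^n$ there, which follows from the Dinew inequality with constant densities combined with the plurifine locality of Monge--Amp\`ere measures; then approximating $f$ from below by simple functions $\sum\lambda_i 1_{E_i}$ on Borel sets gives the full claim.

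The main obstacle is precisely the singular part: $(\theta+dd^c\psi)^n$ need not be absolutely continuous, so the classical Ko{\l}odziej argument (based on $L^p$ densities and solving Dirichlet problems) does not apply directly, and one must invoke Dinew's theorem on mixed inequalities for arbitrary measures with bounded potentials, checking that the densities $f^{1/n}$ are $\nu$-measurable and integrable (which follows from $f\in L^1(\nu)$ and H\"older).
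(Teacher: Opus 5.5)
Your proposal is correct and is essentially the paper's argument: the paper's proof is a one-line appeal to Dinew's mixed Monge--Amp\`ere inequality \cite[Theorem 5.1]{Di09}, applied with $\nu=(\theta+dd^c\psi)^n$ and $g\equiv 1$ exactly as you do (\cite[Theorem 5.1]{Di09}, not \cite{Din15}, is the reference the paper relies on). Your localization to coordinate balls where $\theta=dd^c\phi_U$ with $\phi_U$ smooth psh is a sound way to justify using the local arbitrary-measure version for a merely semi-positive big $\theta$, a point the paper leaves implicit.
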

\begin{proof}
    It follows from Dinew's mixed type inequality in \cite[Theorem 5.1]{Di09}.
\end{proof}
\begin{lemma}\label{mixed type comparison principle}
    Assume that $u, \varphi \in PSH(X, \theta) \cap L^{\infty}(X)$. Then
    \begin{align*}
        \int_{\{\varphi < u\}}(\theta+dd^cu) \wedge (\theta+dd^c\varphi)^{n-1}\leq \int_{\{\varphi < u\}}(\theta+dd^c\varphi)^n.
    \end{align*}
\end{lemma}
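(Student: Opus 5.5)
This is a mixed version of the comparison principle, and I would prove it by the standard Bedford--Taylor scheme. First reduce to the case where $u$ and $\varphi$ are continuous (or smooth in the $\theta+\varepsilon\omega_X$ sense), then pass to the limit using quasi-continuity. The basic identity behind everything is the following. On the open set $\{\varphi<u\}$ (open when both are continuous) the measure $(\theta+dd^c\max(u,\varphi))\wedge(\theta+dd^c\varphi)^{n-1}$ coincides with $(\theta+dd^cu)\wedge(\theta+dd^c\varphi)^{n-1}$. This is the locality of the mixed Monge--Amp\`ere operator in the plurifine topology.

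Step 1 is the continuous case. Fix $\varepsilon>0$ and set $w_\varepsilon:=\max(u-\varepsilon,\varphi)\in PSH(X,\theta)\cap L^\infty(X)$. Every $\theta$-psh function has the same total mass $\int_X\theta^n$ for the mixed product with $(\theta+dd^c\varphi)^{n-1}$: integrating $dd^c(w_\varepsilon-\varphi)\wedge(\theta+dd^c\varphi)^{n-1}$ over the compact manifold $X$ gives $0$. So
\begin{align*}
\int_X(\theta+dd^cw_\varepsilon)\wedge(\theta+dd^c\varphi)^{n-1}=\int_X(\theta+dd^c\varphi)^n .
\end{align*}
Split $X$ into $\{\varphi<u-\varepsilon\}$ (open) and its complement $\{\varphi\ge u-\varepsilon\}$.
\begin{itemize}
\item On the open set $\{\varphi<u-\varepsilon\}$ we have $w_\varepsilon=u-\varepsilon$, so the left measure equals $(\theta+dd^cu)\wedge(\theta+dd^c\varphi)^{n-1}$.
\item On the open set $\{\varphi>u-\varepsilon\}$ we have $w_\varepsilon=\varphi$, so the left measure equals $(\theta+dd^c\varphi)^n$.
\item The contact set $\{\varphi=u-\varepsilon\}$ needs care. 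For all but countably many $\varepsilon$ it has zero mass for both $(\theta+dd^c\varphi)^n$ and $(\theta+dd^cu)\wedge(\theta+dd^c\varphi)^{n-1}$, since these are finite measures and the level sets are disjoint. On a null contact set the Demailly-type inequality $\mathbf 1_{\{\varphi\ge u-\varepsilon\}}(\theta+dd^cw_\varepsilon)\wedge(\theta+dd^c\varphi)^{n-1}\ge \mathbf 1_{\{\varphi\ge u-\varepsilon\}}(\theta+dd^c\varphi)^n$ is then harmless.
\end{itemize}
Subtracting the complement contributions from the total-mass identity yields
\begin{align*}
\int_{\{\varphi<u-\varepsilon\}}(\theta+dd^cu)\wedge(\theta+dd^c\varphi)^{n-1}\le\int_{\{\varphi<u-\varepsilon\}}(\theta+dd^c\varphi)^n .
\end{align*}
Letting $\varepsilon\downarrow0$ along good values and using monotone convergence of the sets $\{\varphi<u-\varepsilon\}\uparrow\{\varphi<u\}$ gives the claim.

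Step 2 handles general bounded $u,\varphi$. I would avoid approximation entirely by using the plurifine locality of Bedford--Taylor mixed products for bounded quasi-psh functions: $\mathbf 1_{\{\varphi<u-\varepsilon\}}(\theta+dd^c\max(u-\varepsilon,\varphi))\wedge T=\mathbf 1_{\{\varphi<u-\varepsilon\}}(\theta+dd^cu)\wedge T$ for $T=(\theta+dd^c\varphi)^{n-1}$, as in \cite[Section 9]{GZ17}. The analogous identity holds on the plurifine open set $\{\varphi>u-\varepsilon\}$. With these, Step 1 goes through verbatim for bounded functions, with only the choice of $\varepsilon$ avoiding charged level sets needed. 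If citing locality for the mixed operator with a semipositive big form is not acceptable, the fallback is approximation. Take $\theta$-psh (or $(\theta+\lambda_j\omega_X)$-psh) smooth decreasing sequences $u_j\downarrow u$, $\varphi_j\downarrow\varphi$ as in \cite[Theorem 2]{BK07}, as in the proof of Lemma \ref{main lemma}. Then pass to the limit with the usual lower-semicontinuity argument on the open sets $\{\varphi<u_k\}$, using convergence in capacity and quasi-continuity.

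I expect the main obstacle to be the limiting step: justifying either plurifine locality for the mixed measure or the passage to the limit. The relevant sets are only plurifine open, and $\theta$ is merely semipositive and big, not K\"ahler. I would try the locality route first, since it reduces everything to the elementary mass-balance computation above.
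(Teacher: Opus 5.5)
Your argument is correct and is essentially the paper's route: the paper simply defers to the classical comparison-principle proof in \cite[Proposition 9.2]{GZ17}, i.e.\ plurifine locality of $(\theta+dd^c\max(u-\varepsilon,\varphi))\wedge(\theta+dd^c\varphi)^{n-1}$ on $\{\varphi<u-\varepsilon\}$ and $\{\varphi>u-\varepsilon\}$ combined with conservation of total mass, which is exactly your Step 2. Your continuous-case Step 1 and the approximation fallback are unnecessary once locality is invoked, and even the choice of $\varepsilon$ avoiding charged level sets can be dropped, since positivity alone gives $\int_{\{\varphi<u-\varepsilon\}}(\theta+dd^cu)\wedge(\theta+dd^c\varphi)^{n-1}\le\int_{\{\varphi\le u-\varepsilon\}}(\theta+dd^c\varphi)^n\le\int_{\{\varphi<u\}}(\theta+dd^c\varphi)^n$.
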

\begin{proof}
    The proof is almost same with the one for the classical comparison principle (see e.g. \cite[Proposition 9.2]{GZ17}).
\end{proof}
\subsection{Boundary behavior of supersolutons}
Let $v_0 \in PSH(X, \omega_0) \cap L^{\infty}(X)$. We show the following property of supersolutions.
\begin{lemma}\label{Cauchy boundary data of supersolution}
    Assume that there exist $\varphi \in PSH(X, \theta) \cap L^{\infty}(X)$ and a constant $C > 0$ satisfying
    \begin{align*}
        \begin{cases}
            &d\mu \leq C(\theta+dd^c\varphi)^n  \text{ in } X, \\
            &\sup_X \varphi = 0.
        \end{cases}
    \end{align*}
    Assume that $v \in \mathcal{P}(X_T, \omega_t) \cap L^{\infty}(X)$ satisfies
    \begin{align*}
        \begin{cases}
            & dt \wedge (\omega_t+dd^cv)^n \leq e^{\partial_tv+F(t, x, v)}dt \wedge d\mu \text{ in $X_T$}, \\
            &\lim_{t \rightarrow 0+}v(t, \cdot) = v_0 \text{ in } L^1(X, d\mu).
        \end{cases}
    \end{align*}
    Then there exist $t_0 > 0$ and a function $\eta(t) \in C([0, t_0])$ such that $\eta(0) = 0$ and
    \begin{align*}
        v(t, x) \geq v_0(x) - \eta(t)
    \end{align*}
    for $dt \wedge d\mu$-a.e. $(t, x) \in (0, t_0) \times X$.
\end{lemma}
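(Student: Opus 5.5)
We compare $v$ with an explicit subsolution built from $v_0$ and $\varphi$, in the spirit of \cite[Proposition 2.2]{GLZ20} and of the local argument in \cite[Section 3]{Ka25b}. For $\varepsilon>0$ small and $t\in(0,t_0)$, set
\begin{align*}
    w_\varepsilon(t,x) := (1-\varepsilon)e^{-At}v_0(x) + \varepsilon\varphi(x) + n t\log t - C_1 t,
\end{align*}
with $C_1$ depending on $F$, $A$, $B$, $\lVert v_0\rVert_{L^\infty}$, $\lVert v\rVert_{L^\infty}$ and $\lVert\varphi\rVert_{L^\infty}$. The conditions $-A\omega_t\le\dot\omega_t$ and $\theta\le\omega_t$ ensure that $w_\varepsilon(t,\cdot)$ is $\omega_t$-psh. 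Moreover,
\begin{align*}
    (\omega_t+dd^c w_\varepsilon)^n \ge \varepsilon^n(\theta+dd^c\varphi)^n \ge \varepsilon^n C^{-1}d\mu .
\end{align*}
On the other hand, $\partial_t w_\varepsilon = -A(1-\varepsilon)e^{-At}v_0 + n\log t + n - C_1$. Hence
\begin{align*}
    e^{\partial_t w_\varepsilon + F(t,x,w_\varepsilon)} \le t^n e^{-C_1'} \le \varepsilon^n C^{-1}
\end{align*}
once $t\le t_\varepsilon$, for a suitable $t_\varepsilon>0$. This yields the subsolution inequality
\begin{align*}
    dt\wedge(\omega_t+dd^c w_\varepsilon)^n \ge e^{\partial_t w_\varepsilon+F(t,x,w_\varepsilon)}dt\wedge d\mu
\end{align*}
on $(0,t_\varepsilon)\times X$. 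To use monotonicity of $F$, replace $w_\varepsilon$ and $v$ by $e^{-\lambda_F t}$-type modifications if needed. The semi-convexity hypothesis on $F$ is not required here.

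Next we compare $w_\varepsilon$ with $v$ on a small slab $(s,t_\varepsilon)\times X$, bypassing the full parabolic comparison principle (which is what is being proven later). Argue slice by slice. On the set $\{v<w_\varepsilon-\delta\}$, combine the inequalities for $v$ and $w_\varepsilon$ with Lemma \ref{fubini type lemma}, the parabolic comparison lemma, and the domination principle Lemma \ref{domination principle}. The mixed inequalities Lemma \ref{mixed type inequality} and Lemma \ref{mixed type comparison principle} serve to control $\partial_t$ terms against $(\theta+dd^c\varphi)^n$. The outcome should be that the $dt\wedge d\mu$ mass of $\{v<w_\varepsilon-\delta(\varepsilon)\}$ vanishes near $t=0$.

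The essential input is the initial condition. Since $v(t,\cdot)\to v_0$ in $L^1(d\mu)$ and $w_\varepsilon(t,\cdot)\le v_0 - \varepsilon v_0 + \varepsilon\varphi + o(1)$, the $\mu$-mass of $\{v(s,\cdot)<w_\varepsilon(s,\cdot)-\delta\}$ tends to $0$ as $s\to0$. By Lemma \ref{integration by parts}, a time-integrated quantity such as $\int_X (w_\varepsilon - v-\delta)^+(t,\cdot)\,(\theta+dd^c\varphi)^n$ has a derivative bounded above by a quantity that is nonpositive on the bad set. It therefore stays $0$ for $t<t_\varepsilon$. This gives
\begin{align*}
    v \ge w_\varepsilon - \delta \quad dt\wedge d\mu\text{-a.e. on }(0,t_\varepsilon)\times X .
\end{align*}

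Finally, rewriting, we get
\begin{align*}
    v(t,x) \ge v_0(x) - \bigl[\varepsilon(\lVert v_0\rVert_\infty+\lVert\varphi\rVert_\infty) + (1-e^{-At})\lVert v_0\rVert_\infty + n t\lvert\log t\rvert + C_1t + \delta\bigr].
\end{align*}
Define $\eta(t)$ as the infimum over $\varepsilon$ of this bracket, with $\varepsilon$ chosen as a function of $t$ so that $t\le t_\varepsilon$; for instance $\varepsilon(t)\sim C^{1/n}t e^{-C_1'/n}$ inverted appropriately, which gives $\varepsilon(t)\to0$. This $\eta$ is continuous with $\eta(0)=0$.

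The main obstacle is the comparison step. It must rely only on $L^1(d\mu)$ convergence at $t=0$, not pointwise convergence, and the time derivative of $v$ exists only a.e. I would first try to make the monotonicity argument rigorous via the Jensen-type trick already used at the end of the proof of Lemma \ref{main lemma}. In that trick, $\int \chi\,\partial_t(\cdot)\,d\mu$ is bounded using the equation and $\log$-concavity.
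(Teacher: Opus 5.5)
There is a genuine gap, and it is exactly the step you flag: comparing $w_\varepsilon$ with $v$ near $t=0$. Your subsolution itself is fine. It is $\omega_t$-psh because $\omega_t\ge(1-\varepsilon)e^{-At}\omega_0+\varepsilon\theta$, and it is a subsolution for $t\lesssim\varepsilon$. The comparison mechanism, however, does not work, for three reasons.
\begin{itemize}
\item Monotonicity of $t\mapsto\int_X(w_\varepsilon-v-\delta)^+(\theta+dd^c\varphi)^n$ would need $\partial_t w_\varepsilon\le\partial_t v$ on the bad set. The two measure inequalities give no such sign, since $\partial_t$ enters only through densities of Monge--Amp\`ere measures. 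A sign for $\partial_t(w_\varepsilon-v)$ on the bad slice comes only from a first-time argument combined with the elliptic comparison and domination principles, which is what Lemma~\ref{first comparison principle} does.
\item That first-time argument needs hypothesis (a) of Lemma~\ref{first comparison principle}: $\{v+(t+1)\varepsilon'<w_\varepsilon\}$ must be $dt\wedge d\mu$-null on some $(0,\tilde\varepsilon)\times X$. The $L^1(d\mu)$ convergence $v(s,\cdot)\to v_0$ only shows that the $\mu$-mass of the bad slice tends to $0$, never that it vanishes. So ``it therefore stays $0$'' has no starting point. Verifying (a) through Lemma~\ref{lemma for the uniform control of Cauchy boundary data} would need the very lower bound you are proving, which is circular. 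Your quantity is also integrated against $(\theta+dd^c\varphi)^n$, for which nothing is known at $t=0$, since only $\mu\le C(\theta+dd^c\varphi)^n$ holds.
\item The Jensen trick from Lemma~\ref{main lemma} goes the wrong way. For a subsolution it bounds $\int\chi\,\partial_t u\,d\mu$ from above. For the supersolution $v$, the inequality $\int_X\omega_t^n\le\int_X e^{\partial_t v+F}d\mu$ gives no lower bound on $\int\partial_t v\,d\mu$.
\end{itemize}

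The missing idea is a time shift that makes the two functions start from the same datum uniformly. Fix $\varepsilon>0$ and use the forms $\omega_{t+\varepsilon}$.
\begin{itemize}
\item $V_\varepsilon(t,x)=v(t+\varepsilon,x)+C_2\varepsilon t$ is a supersolution; the term $C_2\varepsilon t$ absorbs the $t$-Lipschitz dependence of $F$.
\item The paper's subsolution is $U_\varepsilon=(1-\tfrac{t}{t_0})e^{-At}v(\varepsilon,\cdot)+\tfrac{t}{t_0}(\varphi-C_1)+t\bigl(n\log(t/t_0)-n-\log C\bigr)$, which uses $\omega_{t+\varepsilon}\ge e^{-At}\omega_\varepsilon$.
\item Both functions converge \emph{uniformly} to $v(\varepsilon,\cdot)$ as $t\to0^+$; for $V_\varepsilon$ this holds because $v$ is Lipschitz in $t$ near $\varepsilon>0$. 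Hence hypothesis (a) of Lemma~\ref{first comparison principle} is trivial.
\item That lemma then gives $V_\varepsilon\ge U_\varepsilon$. It does not depend on the present lemma, so there is no circularity and no need to bypass it.
\end{itemize}
Only after this is the $L^1(d\mu)$ initial condition used. Choose $\varepsilon_k\to0$ with $v(\varepsilon_k,\cdot)\to v_0$ $\mu$-a.e., and pass to the limit using the local Lipschitz continuity of $v$ in $t$. Your $w_\varepsilon$ could serve as $U_\varepsilon$ once $v_0$ is replaced by $v(\varepsilon,\cdot)$ and time is shifted. As written, though, the proposal lacks this step, and without it the comparison cannot be started.
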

\begin{proof}
    Let $t_0 := \min\{\frac{1}{2\lambda_F}, \frac{T}{2}\}$ and fix $0 < \varepsilon \leq t_0$. Recall that $\dot{\omega}_t \geq -A\omega_t$, which implies that 
    \begin{align*}
        \omega_{t+\varepsilon} \geq e^{-At}\omega_{\varepsilon}.
    \end{align*}
    Let us define a map $(t, x) \mapsto U^{\varepsilon}(t, x)$ on $(0, t_0) \times X$ by
    \begin{align*}
        U_{\varepsilon}(t, x) := \left(1-\frac{t}{t_0}\right)e^{-At}v(\varepsilon, x) + \frac{t}{t_0}(\varphi(x)-C_1) + t\left(n\log\left(\frac{t}{t_0}\right)-n-\log C\right)
    \end{align*}
    where
    \begin{align*}
        C_1 := t_0\lVert F(t, x, \lVert U_{\varepsilon}\rVert_{L^{\infty}((0, t_0) \times X)})\rVert _{L^{\infty}((0, t_0) \times X)}+(At_0+1)\lVert v\rVert_{L^{\infty}((0, t_0) \times X)}.
    \end{align*}
    For all $t \in (0, t_0)$,
    \begin{align*}
        (\omega_{t+\varepsilon}+dd^cU_{\varepsilon}(t, \cdot))^n &= \left(\left(1-\frac{t}{t_0}\right)(\omega_{t+\varepsilon}+e^{-At}dd^cv(\varepsilon, \cdot))+ \frac{t}{t_0} (\omega_{t+\varepsilon}+dd^c\varphi)\right)^n \\
        & \geq \left(1-\frac{t}{t_0}\right)^ne^{-Ant}(\omega_{\varepsilon}+dd^cv(\varepsilon, \cdot))^n + \left(\frac{t}{t_0}\right)^n(\theta+dd^c\varphi)^n\\
        & \geq \frac{t^n}{t_0^nC}d\mu
    \end{align*}
    and
    \begin{align*}
        &\partial_tU_{\varepsilon}+F(t, x, U_{\varepsilon})\\ &= e^{-At}v(\varepsilon, x)\left(\frac{t-1}{t_0}A-\frac{1}{t_0}\right)+(\varphi(x)-C_1)t_0^{-1}+n\log\left(\frac{t}{t_0}\right) - \log C + F(t, x, U_{\varepsilon})\\
        &\leq n\log\left(\frac{t}{t_0}\right)-\log C
    \end{align*}
    by our choice of $C_1$. Therefore we have
    \begin{align*}
        dt \wedge (\omega_{t+\varepsilon}+dd^cU_{\varepsilon})^n \geq e^{\partial_tU_{\varepsilon}+F(t, x, U_{\varepsilon})}dt \wedge d\mu \text{ in $(0, t_0) \times X$.}
    \end{align*}
    Moreover, $U_{\varepsilon}$ is locally uniformly semi-concave in $(0, t_0)$ and $U_{\varepsilon}(t, x) \rightarrow v(\varepsilon, x)$ uniformly in $X$ as $t \rightarrow 0+$. \\
    \indent Next, let us define a map $(t, x) \mapsto V_{\varepsilon}(t, x)$ on $(0, t_0) \times X$ by
    \begin{align*}
        V_{\varepsilon}(t, x) := v(t+\varepsilon, x) + C_2\varepsilon t
    \end{align*}
    for $C_2 > 0$. We claim that if $C_2$ is large enough, then
    \begin{equation}\label{3.2}
        \begin{aligned}
            dt \wedge (\omega_{t+\varepsilon}+dd^cV_{\varepsilon})^n &= dt \wedge (\omega_{t+\varepsilon}+dd^cv(t+\varepsilon, x))^n\\ &\leq e^{\partial_tv(t+\varepsilon, x) + F(t+\varepsilon, x, v(t+\varepsilon, x))} dt \wedge d\mu \\
        &\leq e^{\partial_tV_{\varepsilon}(t, x) + F(t, x, V_{\varepsilon}(t, x))}dt \wedge d\mu \text{ in $(0, t_0) \times X$.}
        \end{aligned}
    \end{equation}
    Indeed, let us fix $C_2 := 2\kappa_F$ where $\kappa_F$ satisfies
    \begin{align*}
        \lvert F(t_1, x, r) - F(t_2, x, r)\rvert \leq \kappa_F \lvert t_1-t_2\rvert
    \end{align*}
    for all $t_1, t_2 \in [0, t_0+\varepsilon)$ and $r \in [-\lVert v\rVert_{L^{\infty}(X_T)}, \lVert v\rVert_{L^{\infty}(X_T)}]$. It follows from the quasi-increasing property of $F$ and our choice of $t_0$ that
    \begin{align*}
        &\partial_tv(t+\varepsilon, x) + F(t+\varepsilon, x, v(t+\varepsilon, x))\\ &\quad \leq \partial_tv(t+\varepsilon, x) + F(t, x, v(t+\varepsilon, x)) + \kappa_F\varepsilon \\
        &\quad \leq \partial_tv(t+\varepsilon, x)+F(t, x, v(t+\varepsilon, x)+C_2\varepsilon t) + \kappa_F{\varepsilon}+ \lambda_F C_2\varepsilon t_0 \\
        &\quad \leq \partial_tV_{\varepsilon} + F(t, x, V_{\varepsilon}).
    \end{align*}
    This implies the last inequality of (\ref{3.2}). Note that $V_{\varepsilon}$ is locally uniformly semi-concave in $(0, T)$ and $V_{\varepsilon}(t, x) \rightarrow v(\varepsilon, x)$ uniformly in $X$ as $t \rightarrow 0+$. Therefore by Lemma \ref{first comparison principle}, we have
    \begin{align*}
        V_{\varepsilon}(t, x) \geq U_{\varepsilon}(t, x) \text{ in } (0, t_0) \times X. 
    \end{align*}
    This means that
    \begin{align*}
        &v(t+\varepsilon, x) \\
        &\quad \geq \left(1-\frac{t}{t_0}\right)e^{-At}v(\varepsilon, x) + \frac{t}{t_0}(\varphi(x)-C_1) + t\left(n\log\left(\frac{t}{t_0}\right)-n-\log C\right)-C_2\varepsilon t.
    \end{align*}
    Finally, since $v(t, \cdot) \rightarrow v_0$ in $L^1(d\mu)$ as $t \rightarrow 0+$, there exists a sequence $\{\varepsilon_k\}_{k \geq 1}$ such that $\varepsilon_k \rightarrow 0$ as $k \rightarrow \infty$ and $v(\varepsilon_{k}, x) \rightarrow v_0(x)$ as $k \rightarrow \infty$ for $\mu$-a.e. $x \in X$. By using this sequence, we obtain
    \begin{align*}
        &v(t, x) \\&\quad\geq \left(1-\frac{t}{t_0}\right)e^{-At}v_0(x) + \frac{t}{t_0} (\varphi(x)-C_1)+t\left(n\log \left(\frac{t}{t_0}\right)-n-\log C\right) \\
        &\quad\geq v_0(x) - \left(\left(1-\frac{t}{t_0}\right)e^{-At}-1\right) \lVert v_0\rVert_{L^{\infty}(X)}-\frac{t}{t_0}(\lVert \varphi\rVert_{L^{\infty}(X)}+C_1)\\
        &\quad \quad+t\left(n\log \left(\frac{t}{t_0}\right)-n-\log C\right)
    \end{align*}
    for $dt \wedge d\mu$-a.e. $(t, x) \in (0, t_0) \times X$.
\end{proof}
\subsection{General comparison principle} We now give a proof of Theorem \ref{intro_main result 2}. We first prove the following lemma, which will be a technical core of the proof of Theorem \ref{intro_main result 2}. For $u \in \mathcal{P}(X_T, \omega_t) \cap L^{\infty}(X_T)$, we define
\begin{align*}
    \partial_t^+u(t, x) := \lim_{\delta \rightarrow 0+}\frac{u(t+\delta, x) - u(t, x)}{\delta}
\end{align*}
and
\begin{align*}
    \partial_t^-u(t, x) := \lim_{\delta \rightarrow 0-}\frac{u(t+\delta, x) - u(t, x)}{\delta}.
\end{align*}
\begin{lemma}\label{first comparison principle}
    Assume that $u, v \in \mathcal{P}(X_T, \omega_t)\cap L^{\infty}(X_T)$ satisfy
    \begin{itemize}
        \item [(a)] for each $\varepsilon > 0$, there exists $\Tilde{\varepsilon} > 0$ such that
        \begin{align*}
            \int_{\{v+(t+1)\varepsilon < u\} \cap ((0, \Tilde{\varepsilon}) \times X)}dt \wedge d\mu = 0.
        \end{align*}
        \item [(b)] $u$ and $v$ are locally uniformly semi-concave in $(0, T)$,
        \item [(c)] for a.e. $t \in (0, T)$,
        \begin{align*}
            &(\omega_t+dd^cu(t, \cdot)) \wedge (\omega_t+dd^cv(t, \cdot))^{n-1} \\
            &\quad\geq \exp\left(\frac{\partial_tu(t, \cdot)-\partial_tv(t, \cdot)+F(t, \cdot, u)-F(t, \cdot, v)}{n}\right)(\omega_t+dd^cv(t, \cdot))^n,
        \end{align*}
        \item [(d)] $dt \wedge (\omega_t+dd^cv)^n \leq e^{\partial_tv+F(t, x, v)}dt \wedge d\mu$ in $X_T$,
        \item [(e)] $\partial_tu(t, x)$ is well-defined for all $(t, x) \in X_T$ and locally uniformly Lipschitz in $(0, T)$,
        \item [(f)] $F(t, z, r)$ is increasing in $r$.
    \end{itemize}
    Then $u \leq v$ in $X_T$.
\end{lemma}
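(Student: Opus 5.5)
Fix $\varepsilon>0$ and compare $u$ with $v_\varepsilon:=v+(t+1)\varepsilon$. It suffices to show $u\le v_\varepsilon$ on $X_T$ and then let $\varepsilon\to0$. Hypothesis (a) controls the set $E:=\{v_\varepsilon<u\}$ near $t=0$. The plan is to prove
\[
\int_E dt\wedge(\omega_t+dd^cv)^n=0 ,
\]
and then apply Lemma \ref{domination principle} to $u$ and $v_\varepsilon$; the latter is legitimate because $dd^cv_\varepsilon=dd^cv$. Following \cite[Section 3]{Ka25b}, I would argue by contradiction. Let $t_1$ be the infimum of those $t$ for which $E_t$ carries positive $(\omega_t+dd^cv)^n$-mass. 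By (a) together with (d), $t_1\ge\tilde\varepsilon>0$, since $(\omega_t+dd^cv)^n\le e^{\partial_tv+F}d\mu$ and $\mu$ gives zero $dt\wedge d\mu$-mass to $E\cap((0,\tilde\varepsilon)\times X)$.

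The core is an estimate on the function
\[
M(t):=\int_X (u-v_\varepsilon)_+(t,\cdot)\,(\omega_t+dd^cv)^n .
\]
On $E_t$, hypothesis (c) combined with (f) gives
\[
(\omega_t+dd^cu)\wedge(\omega_t+dd^cv)^{n-1}\ge e^{(\partial_tu-\partial_tv)/n}(\omega_t+dd^cv)^n ,
\]
because $u>v$ there and $F$ is increasing, so $F(u)-F(v)\ge0$. Next apply the mixed comparison principle (Lemma \ref{mixed type comparison principle}, used with $\omega_t$ in place of $\theta$) to $u$ and $v_\varepsilon$ on slices:
\[
\int_{E_t}(\omega_t+dd^cu)\wedge(\omega_t+dd^cv)^{n-1}\le \int_{E_t}(\omega_t+dd^cv)^n .
\]
Combining the two inequalities yields
\[
\int_{E_t}\big(e^{(\partial_tu-\partial_tv)/n}-1\big)(\omega_t+dd^cv)^n\le0 ,
\]
and hence, by convexity of the exponential,
\[
\int_{E_t}(\partial_tu-\partial_tv)\,(\omega_t+dd^cv)^n\le 0 .
\]
Since $\partial_t v_\varepsilon=\partial_t v+\varepsilon$, this says that the time derivative of $u-v_\varepsilon$ is strictly negative in the average $\int_{E_t}\partial_t(u-v_\varepsilon)\,(\omega_t+dd^cv)^n\le-\varepsilon\int_{E_t}(\omega_t+dd^cv)^n$.

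The main obstacle is converting this slice-wise derivative information into monotonicity of $M$, because the measure $(\omega_t+dd^cv(t,\cdot))^n$ itself moves with $t$. I would handle it by working with one-sided derivatives $\partial_t^\pm$ and the semi-concavity in (b). Semi-concavity lets one compare difference quotients of $u$ and $v$ with their one-sided derivatives at every point; hypothesis (e) gives uniform Lipschitz control of $\partial_tu$. Lemma \ref{Lemma 2.7} ensures the exceptional sets are null for the relevant measures. The change of the measure in $t$ is controlled by integration by parts in $t$, in the spirit of Lemma \ref{integration by parts} and Lemma \ref{CLN}.

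Concretely, I would show that on a short interval $(t_1,t_1+h)$ the sets $E_t$ cannot acquire mass: at points of $E_t$ where $u-v_\varepsilon$ is near zero, its right derivative is $\le-\varepsilon/2$ on average. Alternatively, I would run the argument with the test function $\max(u-v_\varepsilon,0)$ and use the semi-concavity estimates $\partial_t^+\le$ difference quotient $\le\partial_t^-$, which let one pass to limits via Lemma \ref{Lemma 2.9}. This gives $\int_{E_t}(\omega_t+dd^cv)^n=0$ for a.e. $t>t_1$ near $t_1$, contradicting the definition of $t_1$. Hence $E$ is $dt\wedge(\omega_t+dd^cv)^n$-null, Lemma \ref{domination principle} gives $u\le v_\varepsilon$, and letting $\varepsilon\to0$ completes the proof.
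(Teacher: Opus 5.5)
There is a genuine gap, located exactly at the step you yourself call the main obstacle. From (c), (f) and Lemma \ref{mixed type comparison principle} you correctly obtain $\int_{E_t}\bigl(e^{(\partial_tu-\partial_tv)/n}-1\bigr)(\omega_t+dd^cv)^n\le 0$. This says that $\partial_t(u-v_\varepsilon)$ is negative \emph{on average} over $E_t$.

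That alone cannot stop $E_t$ from acquiring mass after $t_1$. New points enter $E_t$ where $u-v_\varepsilon$ crosses $0$ from below, and an average over all of $E_t$ says nothing about them. You also cannot localize the average to $\{0<u-v_\varepsilon<s\}$: inequalities over $\{u-v_\varepsilon>0\}$ and over $\{u-v_\varepsilon>s\}$ give no inequality over their difference.

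The functional $M(t)$ does not rescue this. Its time derivative formally involves how $(\omega_t+dd^cv(t,\cdot))^n$ moves in $t$, i.e.\ $dd^c\partial_tv$, and the hypotheses give no control on that. Lemma \ref{integration by parts} only treats a fixed measure $(\theta+dd^c\psi)^n$, and Lemma \ref{Lemma 2.9} concerns convergence of $\partial_tu_j$ along a sequence, so neither applies. As written, hypotheses (b) and (e) never actually produce an inequality in your argument.

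The missing idea is to use the minimality of $t_1$ with a \emph{finite} time step. This gives a \emph{pointwise positive} lower bound on $\partial_tu-\partial_tv$ on $E_t$, and then your elliptic inequality finishes the job.
\begin{itemize}
\item Let $C$ be the constant from (b) and (e) near $t_1$, take $\delta\le\varepsilon/(6C)$ and $t\in(t_1-\delta,t_1+\delta)$.
\item Since $t-\delta<t_1$, the set $E_{t-\delta}$ is null. Hence for $\mu$-a.e.\ $x\in E_t$ (so $(\omega_t+dd^cv)^n$-a.e., by (d)) one has $(u-v)(t,x)-(u-v)(t-\delta,x)>\varepsilon\delta$.
\item Semi-concavity gives $\tfrac{u(t,x)-u(t-\delta,x)}{\delta}\le\partial_tu(t-\delta,x)+C\delta$ and $\tfrac{v(t,x)-v(t-\delta,x)}{\delta}\ge\partial_t^-v(t,x)-C\delta$.
\item Hypothesis (e) gives $\partial_tu(t-\delta,x)\le\partial_tu(t,x)+C\delta$. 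Combining, $\partial_tu(t,x)>\partial_t^-v(t,x)+\varepsilon/2$ a.e.\ on $E_t$.
\item Feeding this into (c), (f) and the mixed comparison principle gives $e^{\varepsilon/(2n)}\int_{E_t}(\omega_t+dd^cv)^n\le\int_{E_t}(\omega_t+dd^cv)^n$.
\item So these masses vanish for a.e.\ $t$ in the two-sided neighborhood of $t_1$. Lemma \ref{domination principle} then empties $E$ there, contradicting the definition of $t_1$.
\end{itemize}
No control of the time variation of $(\omega_t+dd^cv)^n$ is ever needed.

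Note also that null sets must be comparable across different times. The paper defines $t_1$ through the fixed measure $\mu$ and transfers to $(\omega_t+dd^cv)^n$ using (d). With your definition of $t_1$ via $(\omega_t+dd^cv)^n$-mass, you would first have to upgrade to $E_s=\emptyset$ for $s<t_1$, using the slice-wise domination principle together with continuity in $t$.
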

\begin{proof}
    The proof is similar to the local case in  \cite[Lemma 3.4]{Ka25b}. We fix $\varepsilon > 0$ and define
    \begin{align*}
        E := \{v + (t+1)\varepsilon < u\}.
    \end{align*}
    Our goal is to show that $E = \emptyset$. Assume that it is not the case. Let us denote $E_t := \{x \in X ~\mid~ (t, x) \in E\}$. We define
    \begin{align*}
        t_1 := \inf\{t \in (0, T) ~\mid~ \mu(E_t) > 0\}.
    \end{align*}
    We first show that $t_1 > 0$. By the assumptions (a) and (d), there exists $\Tilde{\varepsilon} > 0$ such that
    \begin{align*}
        0 \leq \int_{E \cap ((0, \Tilde{\varepsilon}) \times X)}dt \wedge (\omega_t+dd^cv)^n \leq \int_{E \cap ((0, \Tilde{\varepsilon}) \times X}e^{\partial_tv+F(t, x, v)}dt \wedge d\mu = 0,
    \end{align*}
    which implies that
    \begin{align*}
        \int_{E \cap ((0, \Tilde{\varepsilon}) \times X)}dt \wedge (\omega_t+dd^cv)^n = 0.
    \end{align*}
    It follows from Lemma \ref{domination principle} that $E \cap ((0, \Tilde{\varepsilon}) \times X) = \emptyset$. Hence $\mu(E_t) = 0$ for all $0 < t < \Tilde{\varepsilon}$, which implies that $t_1 \geq \Tilde{\varepsilon} > 0$. \\
    \indent By replacing $\Tilde{\varepsilon}$ if necessary, we may assume that $t_1 + \frac{\Tilde{\varepsilon}}{2} < T$. It follows from the assumption (b) and (e) that there exists $C > 0$ such that
    \begin{align}\label{Lipschitzness of partial t u}
            \lvert \partial_tu(t, x) - \partial_tu(s, x)\rvert < C \lvert t-s\rvert
    \end{align}
    for all $t, s \in \left[t_1-\frac{\Tilde{\varepsilon}}{2}, t_1+\frac{\Tilde{\varepsilon}}{2}\right]$ and $x \in X$, and
    \begin{align}\label{concavity}
        u(t, x)-Ct^2,~ v(t, x)-Ct^2 \text{ are concave in } \left[t_1-\frac{\Tilde{\varepsilon}}{2}, t_1+\frac{\Tilde{\varepsilon}}{2}\right]
    \end{align}
    for all $x \in X$. Let us define $\delta := \min \left\{\frac{\Tilde{\varepsilon}}{4}, \frac{\varepsilon}{6C}\right\}$ and fix $t \in (t_1-\delta, t_1+\delta)$. Since $\mu(E_{t-\delta}) = 0$, we have
    \begin{align*}
        u(t, x) > v(t, x)+(t+1)\varepsilon \text{ for all } x \in E_t
    \end{align*}
    and
    \begin{align*}
        u(t-\delta, x) \leq v(t-\delta, x) + (t-\delta+1)\varepsilon \text{ for $\mu$-a.e. $x \in E_t$.}
    \end{align*}
    Therefore we have
    \begin{align}\label{comparing difference quotients}
        \frac{u(t, x) - u(t-\delta, x)}{\delta} > \frac{v(t, x) - v(t-\delta, x)}{\delta} + \varepsilon \text{ for $\mu$-a.e. $x \in E_t$.}
    \end{align}
    It follows from (\ref{concavity}) that 
    \begin{align*}
        \frac{(u(t, x) - Ct^2)-(u(t-\delta, x)-C(t-\delta)^2)}{\delta} &\leq \partial_t^+u(t-\delta, x) - 2C(t-\delta) \\
        &= \partial_tu(t-\delta, x) - 2C(t-\delta)
    \end{align*}
    and
    \begin{align*}
        \frac{(v(t, x)-Ct^2) - (v(t-\delta, x)-C(t-\delta)^2)}{\delta} \geq \partial_t^-v(t, x) - 2Ct
    \end{align*}
    for all $x \in E_t$. Therefore it follows from (\ref{comparing difference quotients}) that
    \begin{align}\label{partial t u(t-delta)}
        \partial_tu(t-\delta, x) + 2C\delta > \partial_t^-v(t, x) + \varepsilon \text{ for $\mu$-a.e. $x \in E_t$.}
    \end{align}
    Since $t$ and $t-\delta$ are in $\left(t_1-\frac{\Tilde{\varepsilon}}{2}, t_1+\frac{\Tilde{\varepsilon}}{2}\right)$, by combining (\ref{Lipschitzness of partial t u}) and (\ref{partial t u(t-delta)}), we get
    \begin{align}\label{comparing two partial derivatives}
        \partial_tu(t, x)  > \partial_tu(t-\delta, x) -C\delta>  \partial_t^-v(t, x) + \frac{\varepsilon}{2} \text{ for $\mu$-a.e. $x \in E_t$.}
    \end{align}
    \indent We now show that $E \cap ((t_1-\delta, t_1+\delta) \times X) = \emptyset$. For a.e. $t \in (t_1-\delta, t_1+\delta)$, we have
    \begin{equation}\label{main computation}
        \begin{aligned}
            &\int_{E(t, \cdot)}\exp\left(\frac{\varepsilon}{2n}\right)(\omega_t+dd^cv(t, \cdot))^n \\
            &\quad\leq \int_{E(t, \cdot)}\exp\left(\frac{\partial_tu(t, \cdot)-\partial_tv(t, \cdot)+F(t, \cdot, u)-F(t, \cdot, v)}{n}\right)(\omega_t+dd^cv(t, \cdot))^n \\
            &\quad\leq \int_{E(t, \cdot)}(\omega_t+dd^cu(t, \cdot)) \wedge (\omega_t+dd^cv(t, \cdot))^{n-1} \\
            &\quad \leq \int_{E(t, \cdot)}(\omega_t+dd^cv(t, \cdot))^n,
        \end{aligned}
    \end{equation}
    where we used (\ref{comparing two partial derivatives}) and the condition (f) for the first inequality and Lemma \ref{mixed type comparison principle} for the last inequality. By integrating both sides of (\ref{main computation}) with respect to $t$ for $(t_1-\delta, t_1+\delta)$, we get
    \begin{align*}
        0 \leq e^{\varepsilon/2n}\int_{t_1-\delta}^{t_1+\delta}dt \int_{E(t, \cdot)}(\omega_t+dd^cv(t, \cdot))^n \leq \int_{t_1-\delta}^{t_1+\delta}dt\int_{E(t, \cdot)}(\omega_t+dd^cv(t, \cdot))^n,
    \end{align*}
    which implies that 
    \begin{align*}
        \int_{t_1-\delta}^{t_1+\delta}dt \int_{E(t, \cdot)}(\omega_t+dd^cv(t, \cdot))^n = 0.
    \end{align*}
    It follows from Lemma \ref{fubini type lemma} that
    \begin{align*}
        \int_{E \cap ((t_1-\delta, t_1+\delta) \times X)}dt \wedge (\omega_t+dd^cv)^n = \int_{t_1-\delta}^{t_1+\delta}dt \int_{E(t, \cdot)}(\omega_t+dd^cv(t, \cdot))^n = 0.
    \end{align*}
    By using Lemma \ref{domination principle}, we get $E \cap ((t_1-\delta, t_1+\delta) \times X) = \emptyset$. Since it contradicts to the definition of $t_1$, we conclude that $E = \emptyset$.
\end{proof}
We next give a lemma which is about the condition (a) in Lemma \ref{first comparison principle}.
\begin{lemma}\label{lemma for the uniform control of Cauchy boundary data}
    Assume that $u, v \in \mathcal{P}(X_T, \omega_t) \cap L^{\infty}(X_T)$ satisfy
    \begin{itemize}
        \item [(a)] there exist $t_0 > 0$ and a function $\eta(t) \in C([0, t_0])$ such that $\eta(0) = 0$ and
        \begin{align*}
            v(t, x) \geq v_0(x) - \eta(t)
        \end{align*}
        for $dt \wedge d\mu$-a.e. $(t, x) \in (0, t_0) \times X$,
        \item [(b)] there exists $A > 0$ such that
        \begin{align*}
            \int_D u(t, \cdot)d\mu \leq \int_D u_0d\mu + At
        \end{align*}
        for all Borel subset $D \subset X$,
        \item [(c)] $u_0(x) \leq v_0(x)$ for all $x \in X$.
    \end{itemize}
    Then for each $\varepsilon > 0$, there exists $\Tilde{\varepsilon} > 0$ such that
    \begin{align*}
        \int_{\{v + (t+1)\varepsilon < u\} \cap ((0, \Tilde{\varepsilon}) \times X)}dt \wedge d\mu = 0.
    \end{align*}
\end{lemma}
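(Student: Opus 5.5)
The plan is to reduce the statement to a slice-by-slice claim and then use hypotheses (a), (c) and (b) in that order. Fix $\varepsilon > 0$ and set $E := \{v+(t+1)\varepsilon < u\}$ and $E_t := \{x \in X ~\mid~ (t,x) \in E\}$. By Lemma \ref{fubini type lemma}, applied with the product measure $dt \wedge d\mu$ in place of a Monge-Amp\`ere measure (the same Fubini argument works), it suffices to find $\Tilde{\varepsilon} > 0$ such that $\mu(E_t) = 0$ for a.e. $t \in (0, \Tilde{\varepsilon})$.

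\textbf{Step 1: choose $\Tilde{\varepsilon}$ and locate $E_t$.} Since $\eta$ is continuous with $\eta(0)=0$, I choose $\Tilde{\varepsilon} \le t_0$ such that $\eta(t) \le \varepsilon/4$ on $(0,\Tilde{\varepsilon})$. I will shrink $\Tilde{\varepsilon}$ once more in Step 2. By (a), for a.e. $t \in (0,\Tilde{\varepsilon})$ we have $v(t,\cdot) \ge v_0 - \eta(t)$ $\mu$-a.e. Combining this with (c) gives, for $\mu$-a.e. $x \in E_t$,
\begin{align*}
u(t,x) > v(t,x) + (t+1)\varepsilon \ge u_0(x) - \eta(t) + \varepsilon \ge u_0(x) + \tfrac{3\varepsilon}{4}.
\end{align*}
So, up to a $\mu$-null set, $E_t \subset D_t := \{x ~\mid~ u(t,x) - u_0(x) > 3\varepsilon/4\}$, and $D_t$ is Borel.

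\textbf{Step 2: apply (b) with $D = D_t$.} Integrating the defining inequality of $D_t$ and using (b) gives
\begin{align*}
\tfrac{3\varepsilon}{4}\,\mu(D_t) \le \int_{D_t} (u(t,\cdot) - u_0)\, d\mu \le A t\, \mu(D_t).
\end{align*}
The second inequality uses (b) in the form where the error term is proportional to $\mu(D)$, namely $\int_D u(t,\cdot)\,d\mu \le \int_D u_0\,d\mu + At\,\mu(D)$. This is precisely what the Jensen argument in the proof of Lemma \ref{main lemma} produces: there the constant is $\mu_j(U)\bigl(n\log B + C_2 + \log\int_X\theta^n\bigr)$. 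It is equivalent to the $\mu$-a.e. pointwise bound $u(t,\cdot) \le u_0 + At$, which follows by applying the inequality to $D = \{u(t,\cdot) > u_0 + At + 1/k\}$ and letting $k \to \infty$. Shrinking $\Tilde{\varepsilon}$ so that $A\Tilde{\varepsilon} < 3\varepsilon/4$, the displayed inequality forces $\mu(D_t) = 0$, hence $\mu(E_t) = 0$ for a.e. $t < \Tilde{\varepsilon}$. Integrating in $t$ then yields the conclusion.

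\textbf{Main obstacle.} The delicate point is Step 2. If (b) is used only with a constant $At$ independent of $D$, one gets merely $\mu(E_t) \le 4At/(3\varepsilon)$. That shows the bad set has small measure, but not measure zero. So the key is to justify, and then use, the $\mu(D)$-proportional (equivalently, $\mu$-a.e. pointwise) form of (b). I would first check this by re-running the Jensen argument of Lemma \ref{main lemma} with an arbitrary Borel set $D$ in place of $U$, keeping the factor $\mu_j(D)$, and then passing to the limit $j \to \infty$ as is done there. A secondary, routine point is measurability: $E$ and $D_t$ must be Borel, and the null sets coming from (a) must be combined over a.e. $t$. This follows from the Borel measurability of $u$, $v$ and $u_0$, together with Fubini.
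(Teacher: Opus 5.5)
Your Fubini reduction and Step 1 are fine, and the overall route matches the paper's. Both use (a) and (c) to replace $v$ by $u_0-\eta(t)$, use (b) to control $u-u_0$, and absorb for small $t$; the paper does this after integrating in $t$, you do it slice by slice. The gap is in Step 2, where you use
\[
\int_D u(t,\cdot)\,d\mu\le\int_D u_0\,d\mu+At\,\mu(D)\quad\text{for all Borel } D\subset X,
\]
which is equivalent to $u(t,\cdot)\le u_0+At$ $\mu$-a.e. This is strictly stronger than (b), which only amounts to $\int_X(u(t,\cdot)-u_0)^+\,d\mu\le At$. As you note yourself, (b) alone gives only $\mu(E_t)\le 4At/(3\varepsilon)$.

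Your justification for the stronger form does not hold. Apply Jensen's inequality with the probability measure $\mu|_D/\mu(D)$ to $\int_D e^{\partial_t u}\,d\mu\le K$. This gives
\[
\int_D\partial_t u\,d\mu\le\mu(D)\log\frac{K}{\mu(D)}.
\]
The term $-\mu(D)\log\mu(D)$ is bounded, which is why one gets a constant independent of $D$, but it is not $O(\mu(D))$. The line in the proof of Lemma \ref{main lemma} that you quote simply drops $-\mu_j(U)\log\mu_j(U)$. A $\mu$-a.e. bound $u(t,\cdot)\le u_0+At$ says that $u$ rises at most linearly from $u_0$ at almost every point. That is a regularity statement not available for rough data, where one only has $\partial_t u\le\kappa/t$.

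In fact, with (b) as written the conclusion can fail, so no argument from (b) alone can close the gap. Here is a sketch of a counterexample.

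\begin{itemize}
\item Take $X=\mathbb{P}^1$ and $\omega_t=\theta=\mu=\omega_{FS}$. Let $u_0=\max(h,-M)$, where $h=\sum_k 2^{-k}G_{1/k}$ is a sum of $\omega_{FS}$-Green functions with poles $1/k$ in an affine chart and $M$ is large. Put $v\equiv v_0:=u_0$.
\item Then $u_0\equiv-M$ near each pole. Let $\rho$ be a local potential of $\omega_{FS}$. There are a fixed $\delta>0$ and radii $r_k\to0$ (doubly exponentially) such that, for all large $k$, the circle averages of $u_0+\rho$ over $\partial B(1/k,r_k)$ exceed $(u_0+\rho)(1/k)+\delta$.
\item Poisson-modify $u_0+\rho$ on $B(1/k,r_k)$ and subtract $\rho$. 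This gives $\omega_{FS}$-subharmonic functions $\tilde u_k\ge u_0$ with $\tilde u_k-u_0\ge\delta/2$ on an open set $V_k\ni 1/k$ and $\int_X(\tilde u_k-u_0)\,d\mu=O(r_k^2)$.
\item Interpolate linearly in $t$ between $\tilde u_{k+1}$ and $\tilde u_k$ on $[2^{-k-1},2^{-k}]$. This gives $u\in\mathcal{P}(X_T,\omega_t)\cap L^\infty(X_T)$ with $u\ge u_0$ and $\int_X(u(t,\cdot)-u_0)\,d\mu\le t$.
\item So (a), (b), (c) hold with $\eta\equiv0$ and $A=1$. But for $\varepsilon<\delta/8$, the slice $E_t$ contains $V_k$ for $t$ in half of each such interval.
\end{itemize}

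The paper's chain (\ref{long inequality}) makes the same silent substitution at its third inequality. After Fubini, (b) with $D=E_t$ bounds $\int_{E_t}(u(t,\cdot)-u_0)\,d\mu$ by $At$, whereas the right-hand side written there corresponds to $At\,\mu(E_t)$. So what you have is a correct proof of the lemma under the strengthened hypothesis. That hypothesis would then have to be verified where the lemma is applied, for $\Phi^{\varepsilon}-O(\varepsilon)t$ in the uniqueness theorem. The paper only establishes the $D$-independent form there.
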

\begin{proof}
    The proof is similar to the local case \cite[Lemma 3.6]{Ka25b}. We fix $\varepsilon > 0$ and define 
    \begin{align*}
        E := \{v+(t+1)\varepsilon < u\}.
    \end{align*}      
    Since we have
    \begin{align*}
        1 < \frac{u-v}{(t+1)\varepsilon}
    \end{align*}
    in $E$, for all $0 < \delta < t_0$, we get
    \begin{equation}\label{long inequality}
        \begin{aligned}
            &\int_{E \cap ((0, \delta) \times X)}dt \wedge d\mu \\
        &\quad\leq \int_{E \cap ((0, \delta) \times X)}\frac{u-v}{(t+1)\varepsilon}dt \wedge d\mu \\
        &\quad \leq \int_{E \cap ((0, \delta) \times X)} \frac{u-u_0}{(t+1)\varepsilon}dt \wedge d\mu + \int_{E \cap ((0, \delta) \times X)}\frac{v_0-v}{(t+1)\varepsilon}dt \wedge d\mu \\
        &\quad \leq \frac{A}{\varepsilon}\int_{E \cap ((0, \delta) \times X)}\frac{t}{t+1}dt\wedge d\mu + \frac{1}{\varepsilon}\int_{E \cap ((0, \delta) \times X)}\frac{\eta(t)}{t+1}dt \wedge d\mu \\
        &\quad \leq \frac{A\delta+\sup_{t \in (0, \delta)}\lvert \eta(t)\rvert}{\varepsilon}\int_{E \cap ((0, \delta) \times X)}dt \wedge d\mu.
        \end{aligned}
    \end{equation}
    We used the condition (c) for the second inequality and the condition (a) and (b) for the third inequality. Let us choose $0 < \Tilde{\varepsilon} < t_0$ small enough so that 
    \begin{align}\label{choosing tilde epsilon}
        A\Tilde{\varepsilon} \leq \frac{\varepsilon}{3} \text{ and } \sup_{t \in (0, \Tilde{\varepsilon})}\lvert \eta(t)\rvert \leq \frac{\varepsilon}{3}.
    \end{align}
    It follows from (\ref{long inequality}) and (\ref{choosing tilde epsilon}) that
    \begin{align*}
        0 \leq \int_{E \cap ((0, \Tilde{\varepsilon}) \times X)}dt \wedge d\mu \leq \frac{2}{3}\int_{E \cap ((0, \Tilde{\varepsilon}) \times X)}dt \wedge d\mu,
    \end{align*}
    which implies that
    \begin{align*}
        \int_{E \cap ((0, \Tilde{\varepsilon}) \times X)}dt \wedge d\mu = 0.
    \end{align*}
    Therefore we get the result.
\end{proof}
We now give a proof of Theorem \ref{intro_main result 2}. 
\begin{theorem}\label{main result_comparison principle}
    Let $u_0, v_0 \in PSH(X, \omega_0) \cap L^{\infty}(X)$. Assume that there exist $\varphi \in PSH(X, \theta) \cap L^{\infty}(X)$ and a constant $C > 0$ satisfying
    \begin{align*}
        d\mu \leq C(\theta+dd^c\varphi)^n \text{ in $X$.}
    \end{align*}
    Let $u, v \in \mathcal{P}(X_T, \omega_t) \cap L^{\infty}(X_T)$ satisfy $\lim_{t \rightarrow 0+}u(t, \cdot) = u_0$ and $\lim_{t \rightarrow 0+}v(t, \cdot) = v_0$ in $L^1(X, d\mu)$. Assume that
    \begin{itemize}
        \item [(a)] $u$ and $v$ are locally uniformly semi-concave in $(0, T)$,
        \item [(b)] $dt \wedge (\omega_t+dd^cu)^n \geq e^{\partial_tu+F(t, x, u)}dt \wedge d\mu$ in $X_T$,
        \item [(c)] $dt \wedge (\omega_t+dd^cv)^n \leq e^{\partial_tv+F(t, x, v)}dt \wedge d\mu$ in $X_T$.
    \end{itemize}
    If $u_0 \leq v_0$, then $u \leq v$.
\end{theorem}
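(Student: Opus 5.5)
The plan is to reduce to Lemma \ref{first comparison principle}, applied to suitably perturbed versions of $u$ and $v$, and to verify its hypotheses using the other lemmas of this section.

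\textbf{Step 1 (normalization).} Replace $u$ by $e^{-\lambda_F t}$-type rescalings, or rather use the standard change $\tilde u = u$, $\tilde F(t,x,r) = F(t,x,r)+\lambda_F r$ after a substitution, so that $F$ becomes increasing in $r$ (condition (f)). For instance, set $\hat u(t,x) = e^{\lambda t}u(t,x)$ with $\omega_t$ replaced by $e^{\lambda t}\omega_t$. This is the routine device of \cite{GLZ20}. Since $t$ ranges in a bounded interval, it preserves boundedness and semi-concavity.

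\textbf{Step 2 (regularity of $\partial_t u$, condition (e)).} Condition (e) asks for $\partial_t u$ to exist everywhere and be locally Lipschitz in $t$, which a semi-concave subsolution need not satisfy. Semi-concavity only gives monotone one-sided derivatives. I would replace $u$ by a time-regularization that keeps the subsolution property: sup-convolution in $t$ (or averaging $u_\delta(t,x)=\delta^{-1}\int_0^\delta u(t+s,x)\,ds$ combined with a small correction term). Semi-concavity makes the averaged $\partial_t$ Lipschitz. The mixed inequality then has to be checked for the regularized function. Alternatively, I would work with $\partial_t^\pm$ directly, as in the proof of Lemma \ref{first comparison principle}, where $\partial_t u(t-\delta,\cdot)$ is only used through one-sided derivatives. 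I expect this step to be the main obstacle, and the fallback is to follow \cite[Section 3]{Ka25b} verbatim.

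\textbf{Step 3 (condition (c)).} From (b) and (c) of the theorem we have, for a.e.\ $t$,
\[(\omega_t+dd^cu)^n \geq e^{\partial_t u+F(t,\cdot,u)}\mu \quad\text{and}\quad (\omega_t+dd^cv)^n\le e^{\partial_tv+F(t,\cdot,v)}\mu.\]
Hence $(\omega_t+dd^cu)^n \ge f\,(\omega_t+dd^cv)^n$ with $f=\exp(\partial_tu-\partial_tv+F(u)-F(v))$. Here $f$ is bounded on compact time intervals, by Lipschitzness and boundedness. Lemma \ref{mixed type inequality}, applied with $\omega_t$ in place of $\theta$ (the argument of Dinew is insensitive to this), gives hypothesis (c) of Lemma \ref{first comparison principle}. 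Hypothesis (d) is exactly (c) of the theorem, and hypothesis (b) is given.

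\textbf{Step 4 (initial condition, hypothesis (a)).} Normalize $\sup\varphi=0$. Lemma \ref{Cauchy boundary data of supersolution} applied to $v$ yields $t_0$ and $\eta$ with $v\ge v_0-\eta(t)$ a.e. For $u$, I would repeat the Jensen argument from the end of the proof of Lemma \ref{main lemma}. Since $(\omega_t+dd^cu)^n$ has total mass at most $B^n\int_X\theta^n$, the subsolution inequality together with Jensen gives $\int_D\partial_tu\,d\mu\le A'$. So $t\mapsto\int_D u(t,\cdot)d\mu - A't$ is nonincreasing, and letting $s\to0^+$ via the $L^1(d\mu)$ convergence gives $\int_D u(t)d\mu\le\int_D u_0d\mu+A't$. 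Lemma \ref{lemma for the uniform control of Cauchy boundary data} then gives hypothesis (a). Lemma \ref{first comparison principle} yields $u\le v$ after undoing the perturbation, which is harmless as $\varepsilon\to 0$ and $\delta\to 0$.
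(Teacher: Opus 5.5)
Your Steps 3 and 4 follow the paper's route, but Step 2 is a genuine gap, and it is where the real content of the proof lies.

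First, hypothesis (e) of Lemma~\ref{first comparison principle} cannot be bypassed with one-sided derivatives. In that proof one passes from $\frac{u(t,x)-u(t-\delta,x)}{\delta}>\frac{v(t,x)-v(t-\delta,x)}{\delta}+\varepsilon$ to $\partial_tu(t,x)>\partial_t^-v(t,x)+\frac{\varepsilon}{2}$. This needs $\partial_tu(t,x)\geq\frac{u(t,x)-u(t-\delta,x)}{\delta}-O(\delta)$ at the right endpoint, which is a semi-convexity bound on $u$; semi-concavity gives the opposite inequality.

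Second, the additive regularizations you suggest (averaging or sup-convolving the shifted slices $u(t+s,\cdot)$) do not produce an admissible competitor. The slice $u(t+s,\cdot)$ is only $\omega_{t+s}$-psh, and the hypotheses only give bounds such as $\omega_{t+s}\leq(1+s/t)^A\omega_t$, which degenerate as $t\to0$. Moreover, as $t\to0^+$ your average tends to $\delta^{-1}\int_0^\delta u(s,\cdot)\,ds$ rather than to $u_0$, so $u_0\leq v_0$ can no longer be fed into Lemma~\ref{lemma for the uniform control of Cauchy boundary data}. You also do not say how the subsolution or mixed inequality passes to the regularization, whose exponent would involve $\partial_tu$ and $F$ at shifted times. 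In fact your Step 3 checks hypothesis (c) for $u$ itself, which need not satisfy (e). So no single function satisfying all hypotheses of Lemma~\ref{first comparison principle} is ever produced. (Minor point: $\hat u=e^{\lambda t}u$ alone changes the coefficient of $\partial_t\hat u$ in the exponent; the device of \cite{GLZ20} also reparametrizes time.)

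The missing idea is the paper's two-stage construction. For $s$ near $1$ one uses the \emph{multiplicative} rescaling $W^s(t,x)=\frac{\alpha_s}{s}u(st,x)+(1-\alpha_s)\varepsilon_1\varphi-C_1\lvert s-1\rvert t-t\log C$.
\begin{itemize}
\item Rescaling $t\mapsto st$ fixes $t=0$, so every $W^s$ starts essentially from $u_0$. It is also compatible with the structure of $\omega_t$ uniformly in $t$, since $\omega_t\geq(1-A_1\lvert s-1\rvert)\omega_{st}$.
\item The positivity lost in passing from $\omega_{st}$ to $\omega_t$ is restored by $(1-\alpha_s)\varepsilon_1(\theta+dd^c\varphi)$. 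Since $(\theta+dd^c\varphi)^n\geq C^{-1}\mu$, convexity of $\exp$ together with Dinew's inequality shows that each $W^s$ is again a subsolution. This is where the hypothesis on $\mu$ is used in the interior; your proposal uses $\varphi$ only through Lemma~\ref{Cauchy boundary data of supersolution}.
\item One then applies Lemma~\ref{mixed type inequality} to each pair $(W^s,v)$ \emph{before} averaging, and mollifies in $s$: $\Phi^\varepsilon=\int W^s\eta_\varepsilon(s-1)\,ds-C_2\varepsilon(t+1)$.
\item The mixed inequality is linear in its first slot, so it survives the average by Fubini and Jensen. The only loss is $\int F(t,\cdot,W^s)\eta_\varepsilon(s-1)\,ds\geq F(t,\cdot,\Phi^\varepsilon)-O(\varepsilon)$, which uses the semi-convexity of $F$ and an auxiliary bound $t\lvert\partial_tu\rvert\leq B$ (imposed first and removed at the end by a time shift).
\item Now $\partial_t\Phi^\varepsilon$ exists everywhere and is locally Lipschitz, which gives (e). This is why Lemma~\ref{first comparison principle} is formulated with a mixed inequality rather than a Monge--Amp\`ere inequality for $u$.
\end{itemize}
Your Jensen argument in Step 4 must then be run for $W^s$, uniformly in $s$. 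The comparison $\Phi^\varepsilon-O(\varepsilon)t\leq v$ then yields $u\leq v$ as $\varepsilon\to0$.
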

\begin{proof}
    We may assume that $\sup_X \varphi = 0$. Let us fix $0 < T' < T$ and $\varepsilon_0 > 0$ such that $(1+\varepsilon_0)T' < T$. We first assume that there exists $B > 0$ satisfying
    \begin{align}\label{assumption on 1st derivative}
        t\lvert \partial_tu(t, x)\rvert \leq B
    \end{align}
    in $(0, T') \times X$. We will remove this assumption at the end of the proof. Next, we only consider the case when $F(t, z, r)$ is increasing in $r$. It suffices to consider only this case due to the invariance property of the equation explained in \cite[Section 3.3]{GLZ20}. By the assumption on $\omega_t$, there exists $A_1 > 0$ such that for all $t \in (0, T')$ and $s \in [1-\varepsilon_0, 1+\varepsilon_0]$,
    \begin{align*}
        \omega_t \geq (1-A_1\lvert s-1\rvert)\omega_{ts}.
    \end{align*}
    Let us define $\lambda_s := \frac{\lvert 1-s\rvert}{s}$ and $\alpha_s := s(1-\lambda_s)(1-A_1\lvert s-1\rvert) \in (0, 1)$. By shrinking $\varepsilon_0$ if necessary, we may assume that
    \begin{align*}
        \frac{\lambda_s}{1-\alpha_s} \geq \frac{1}{5+A_1} > 0.
    \end{align*}
    Let us define $\varepsilon_1 := (5+A_1)^{-1}$. By following the construction in \cite[Lemma 3.14]{GLZ20}, we obtain that there exists a constant $C_1 > 0$ such that for all $s \in [1-\varepsilon_0, 1+\varepsilon_0]$, 
    \begin{align*}
        W^s(t, x) := \frac{\alpha_s}{s}u(st, x) + (1-\alpha_s)\varepsilon_1\varphi - C_1\lvert s-1\rvert t - t\log C
    \end{align*}
    satisfies
    \begin{align}\label{property of Ws}
        \begin{cases}
            &W^s \in \mathcal{P}(X_{T'}, \omega_t) \cap L^{\infty}(X_{T'}), \\
            &dt \wedge (\omega_t+dd^cW^s)^n \geq e^{\partial_tW^s+F(t, x, W^s)}dt \wedge d\mu \text{ in } X_{T'}, \\
            &\lim_{t \rightarrow 0+}W^s(t, x) \leq u_0(x) \text{ for $\mu$-a.e. $x \in X$.} 
        \end{cases}
    \end{align}
    Indeed, fix $t \in (0, T')$ and $s \in [1-\varepsilon_0, 1+\varepsilon_0]$ such that
    \begin{align*}
        (\omega_{st}+dd^cu(st, \cdot))^n \geq e^{\partial_{\tau}u(st, \cdot)+F(st, \cdot, u)}d\mu \text{ in $X$.}
    \end{align*}
    By the choice of $W^s$, we have
    \begin{align*}
        &(\omega_t+dd^cW^s(t, \cdot))^n\\ &\geq ((1-\lambda_s)\omega_t+s^{-1}\alpha_sdd^cu(st, \cdot)+\lambda_s\omega_t+(1-\alpha_s)\varepsilon_1dd^c\varphi)^n \\
        &\geq (\alpha_ss^{-1}(\omega_{st}+dd^cu(st, \cdot))+(1-\alpha_s)\varepsilon_1(\theta+dd^c\varphi))^n \\
        &\geq \exp\left(\alpha_s(\partial_{\tau}u(st, \cdot)+F(st, \cdot, u)-n\log s)+(1-\alpha_s)(n\log\varepsilon-\log C)\right)d\mu.
    \end{align*}
    The last inequality holds by the convexity of the exponential function and Dinew's mixed type inequality \cite[Theorem 5.1]{Di09}. Next, 
    \begin{align*}
        \partial_tW^s(t, x)+F(t, x, W^s) = \alpha_s u(st, x)-C_1\lvert s-1\rvert - \log C+F(t, x, W^s).
    \end{align*}
    Since $\alpha_s-1 = O(\lvert s-1\rvert)$ and $F$ is bounded on $[0, (1+\varepsilon_0)T'] \times X \times [-(\lVert W^s\rVert_{L^{\infty}(X_T)}+\lVert u\rVert_{L^{\infty}(X_T)}), (\lVert W^s\rVert_{L^{\infty}(X_T)}+\lVert u\rVert_{L^{\infty}(X_T)})]$, there exists $C_1 > 0$ satisfying 
    \begin{align*}
        C_1\lvert s-1\rvert \geq \alpha_s n\log s+F(t, x, W^s) - \alpha_sF(st, x, u)+(\alpha_s-1)n\log \varepsilon_1
    \end{align*}
    for all $x \in X$, which implies that
    \begin{align}\label{inequality for Ws}
        (\omega_t+dd^cW^s(t, \cdot))^n \geq e^{\partial_tW^s(t, \cdot)+F(t, \cdot, W^s)}d\mu \text{ in $X$.}
    \end{align}
    \indent Now, we proceed as in the proof of the local case \cite[Theorem 3.8]{Ka25b}. Let $\eta_{\varepsilon}$ be the standard smooth mollifier on $\mathbb{R}$. We define
    \begin{align*}
        \Phi^{\varepsilon}(t, x) := \int_{\mathbb{R}}W^s(t, x)\eta_{\varepsilon}(s-1)ds - C_2\varepsilon(t+1)
    \end{align*}
    for some $C_2 > 0$ which will be chosen later.
    We first show that $\Phi^{\varepsilon}-O(\varepsilon)t$ and $v$ in $(0, T') \times X$ satisfy all the assumptions given in Lemma \ref{first comparison principle}. We choose $C_2 > 0$ such that 
    \begin{align*}
        \limsup_{t \rightarrow 0+}\Phi^{\varepsilon}(t, x) \leq u_0(x) \text{ for $\mu$-a.e. $x \in X$.}
    \end{align*}
    It follows from the definition of $\Phi^{\varepsilon}$ that $\partial_t\Phi^{\varepsilon}(t, x)$ is well-defined for all $(t, x) \in X_T$. Also, by the fact that $W^s$ is locally uniformly Lipschitz and locally uniformly semi-concave in $(0, T')$, $\partial_t\Phi^{\varepsilon}$ is locally uniformly Lipschitz in $(0, T')$ and $\Phi^{\varepsilon}$ is locally uniformly semi-concave in $(0, T')$.\\
    \indent We next show that for a.e. $t \in (0, T')$,
    \begin{align*}
        &(\omega_t+dd^c\Phi^{\varepsilon}(t, \cdot)) \wedge (\omega_t+dd^cv(t, \cdot))^{n-1} \\
        &\geq \exp\left(\frac{\partial_t\Phi^{\varepsilon}(t, \cdot)-\partial_tv(t, \cdot)+F(t, \cdot, \Phi^{\varepsilon})-F(t, \cdot, v)-O(\varepsilon)}{n}\right)(\omega_t+dd^cv(t, \cdot))^{n-1}.
    \end{align*}
    Since $W^s$ satisfies (\ref{property of Ws}), we have
    \begin{align*}
        (\omega_t+dd^cW^s(t, \cdot))^n &\geq e^{\partial_tW^s(t, \cdot)+F(t, \cdot, W^s)}d\mu \\
        &\geq e^{\partial_tW^s(t, \cdot)-\partial_tv(t, \cdot)+F(t, \cdot, W^s)-F(t, \cdot, v)}(\omega_t+dd^cv(t, \cdot))^n
    \end{align*}
    for a.e. $t \in (0, T)$. Fix such a $t$. It follows from Lemma \ref{mixed type inequality} that
    \begin{align*}
        &(\omega_t+dd^cW^s(t, \cdot)) \wedge (\omega_t+dd^cv(t, \cdot))^{n-1} \\
        &\quad \geq \exp\left(\frac{\partial_tW^s(t, \cdot)-\partial_tv(t, \cdot)+F(t, \cdot, W^s)-F(t, \cdot, v)}{n}\right)(\omega_t+dd^cv(t, \cdot))^n.
    \end{align*}
    Let $\chi(x)$ be a positive smooth test function on $X$. Let $\{U_k\}_{k = 1}^m$ be an open cover of $X$, and let $\rho_k(x)$ be a local potential of $\omega_t$ on $U_k$ for each $k$. Next, let $\{\chi_k(x)\}_{k = 1}^m$ be a partition of unity subordinate to $\{U_k\}_{k = 1}^m$. We have
    \begin{align*}
        &\int_{X}\chi(x)(\omega_t+dd^c\Phi^{\varepsilon}(t, \cdot)) \wedge (\omega_t+dd^cv(t, \cdot))^{n-1} \\ &\quad= \sum_{k = 1}^m\int_{U_k}\chi(x)\chi_k(x)(dd^c(\rho_k(x)+\Phi^{\varepsilon}(t, \cdot)) \wedge (\omega_t+dd^cv(t, \cdot))^{n-1} \\
        &\quad = \sum_{k = 1}^m\int_{U_k}(\rho_k(x)+\Phi^{\varepsilon}(t, \cdot))dd^c(\chi(x)\chi_k(x)) \wedge (\omega_t+dd^cv(t, \cdot))^{n-1} \\
        &\quad = \sum_{k = 1}^m\int_{U_k}\left(\int_{\mathbb{R}}(\rho_k(x)+W^s(t, \cdot)-\varepsilon(t+1))\eta_{\varepsilon}(s-1)ds\right) dd^c(\chi(x)\chi_k(x))\\& \quad\quad \quad \wedge (\omega_t+dd^cv(t, \cdot))^{n-1}.
    \end{align*}
    By Fubini's theorem, the last term becomes
    \begin{align*}
        &\sum_{k = 1}^m \int_{\mathbb{R}}\eta_{\varepsilon}(s-1)\left(\int_{U_k}(\rho_k+W^s(t, \cdot)-\varepsilon(t+1))dd^c(\chi\chi_k) \wedge (\omega_t+dd^cv(t, \cdot))^{n-1}\right)ds \\
        &\quad= \sum_{k = 1}^m\int_{\mathbb{R}}\eta_{\varepsilon}(s-1)\left(\int_{U_k}\chi\chi_k(\omega_t+dd^cW^s(t, \cdot)) \wedge (\omega_t+dd^cv(t, \cdot))^{n-1}\right)ds \\
        &\quad \geq \int_{\mathbb{R}}\eta_{\varepsilon}(s-1)\left(\int_X \chi e^{n^{-1}(\partial_t(W^s(t, \cdot)-v(t, \cdot))+F(t, \cdot, W^s)-F(t, \cdot, v))}(\omega_t+dd^cv(t, \cdot))^n\right)ds.
    \end{align*}
    By the Fubini's theorem again and Jensen's inequality, the last term becomes
    \begin{align*}
        &\int_{X}\chi\left(\int_{\mathbb{R}}\eta_{\varepsilon}(s-1)e^{n^{-1}(\partial_t(W^s(t, \cdot)-v(t, \cdot))+F(t, \cdot, W^s) - F(t, \cdot, v))}ds\right)(\omega_t+dd^cv(t, \cdot))^n \\
        &\quad\geq \int_X \chi e^{\int_{\mathbb{R}}n^{-1}(\partial_t(W^s(t, \cdot)-v(t, \cdot))+F(t, \cdot, W^s)-F(t, \cdot, v))\eta_{\varepsilon}(s-1)ds}(\omega_t+dd^cv(t, \cdot))^n \\
        &\quad \geq \int_{X}\chi e^{n^{-1}(\partial_t(\Phi^{\varepsilon}(t, \cdot)-v(t, \cdot))+F(t, \cdot, \Phi^{\varepsilon})-O(\varepsilon)-F(t, \cdot, v))}(\omega_t+dd^cv(t, \cdot))^n.
    \end{align*}
    The last inequality follows from
    \begin{align*}
        \int_{\mathbb{R}}F(t, \cdot, W^s)\eta_{\varepsilon}(s-1)ds \geq F(t, \cdot, \Phi^{\varepsilon})-O(\varepsilon),
    \end{align*}
     which is obtained by following the proof of \cite[Proposition 3.16]{GLZ20} and using the assumption (\ref{assumption on 1st derivative}). \\
    \indent It remains to show the assumption (a) in Lemma \ref{first comparison principle}. To show this, we prove that
    \begin{itemize}
        \item [(i)] there exist $0 < t_0 < T'$ and a function $\eta(t) \in C([0, t_0])$ such that $\eta(0) = 0$ and
        \begin{align*}
            v(t, x) \geq v_0(x) - \eta(t)
        \end{align*}
        for $dt \wedge d\mu$-a.e. $(t, x) \in (0, t_0) \times X$,
        \item [(ii)] for each $\varepsilon > 0$, there exists $A = A(\varepsilon) > 0$ such that
        \begin{align*}
            \int_{D}(\Phi^{\varepsilon}(t, \cdot)-O(\varepsilon)t)d\mu \leq \int_D u_0 d\mu + At
        \end{align*}
        for all Borel subset $D \subset X$,
    \end{itemize}
    and apply Lemma \ref{lemma for the uniform control of Cauchy boundary data}. Here (i) follows from Lemma \ref{Cauchy boundary data of supersolution}. To prove (ii), it suffices to show that there exists $A_0 > 0$ such that
    \begin{align*}
        \int_{D}(W^s(t, \cdot)-C_2\varepsilon(t+1))d\mu \leq \int_D u_0d\mu + A_0t
    \end{align*}
    for all Borel subset $D \subset X$.
    Indeed, fix a Borel subset $D \subset X$. Recall that $W^s$ satisfies
    \begin{align*}
        \begin{cases}
            &dt \wedge (\omega_t+dd^cW^s)^n \geq e^{\partial_tW^s+F(t, x, W^s)}dt \wedge d\mu, \\
            &\lim_{t\rightarrow 0+}W^s(t, x) \leq u_0(x)+C_2\varepsilon \text{ for $d\mu$-a.e.} x \in X.
        \end{cases}
    \end{align*}
    It follows from elliptic Chern-Levine-Nirenberg inequality that there exists a constant $C_3 > 0$ satisfying
    \begin{align*}
        \int_{D}e^{\partial_tW^s(t, \cdot)+F(t, \cdot, W^s)}d\mu \leq \int_X (\omega_t+dd^cW^s(t, \cdot))^n \leq C_3.
    \end{align*}
    Let $C_4 := -\inf_{X_T \times [-\lVert W^s\rVert_{L^{\infty}(X_{T'})}, \lVert W^s\rVert_{L^{\infty}(X_{T'})}]}$. By using the proof of \cite[Proposition 2.3]{GLZ20}, we get
    \begin{align*}
        \int_D\partial_tW^s(t, \cdot)d\mu \leq C_4\mu(X) + \mu(X) \log C_3 - \mu(X)\log \mu(X) \leq A_0
    \end{align*}
    for some uniform constant $A_0 > 0$. It implies that
    \begin{align*}
        \int_{D}(W^s(t, \cdot)-C_2\varepsilon (t+1))d\mu \leq \int_{D}u_0d\mu+A_0t
    \end{align*}
    Therefore it follows from Lemma \ref{lemma for the uniform control of Cauchy boundary data} that $\Phi^{\varepsilon}-O(\varepsilon)t$ and $v$ satisfy the assumption (a) in Lemma \ref{first comparison principle}. Hence by using Lemma \ref{first comparison principle}, we get
    \begin{align*}
        \Phi^{\varepsilon}-O(\varepsilon)t \leq v \text{ in $(0, T') \times X$.}
    \end{align*}
    Since $\Phi^{\varepsilon} \rightarrow u$ pointwisely as $\varepsilon \rightarrow 0$, we get
    \begin{align*}
        u \leq v \text{ in $(0, T') \times X$.}
    \end{align*}
    \indent Finally, we remove the assumption (\ref{assumption on 1st derivative}). Let $\kappa_F > 0$ be a constant satisfying
    \begin{align*}
        \lvert F(t_1, z, u) - F(t_2, z, u)\rvert \leq \kappa_F \lvert t_1-t_2\rvert
    \end{align*}
    for any $t_1, t_2 \in [0, T']$. Fix $0 < \varepsilon_1 < T-T'$ and let $U^{\varepsilon_1}(t, x) := u(t+\varepsilon_1, x) - \kappa_F \varepsilon_1 t$ and $V^{\varepsilon_1}(t, x) := v(t+\varepsilon_1, x) + \kappa_F \varepsilon_1 t$. Then for a.e. $t \in (0, T)$, $U^{\varepsilon_1}$ and $V^{\varepsilon_1}$ satisfy
    \begin{align*}
(\omega_{t+\varepsilon_1}+dd^cU^{\varepsilon_1}(t, \cdot))^n &= (\omega_{t+\varepsilon_1}+dd^cu(t+\varepsilon_1))^n \\
&\geq e^{\partial_tu(t+\varepsilon_1, \cdot)+F(t+\varepsilon_1, \cdot, u(t+\varepsilon_1, \cdot))}d\mu \\
&\geq e^{\partial_tU^{\varepsilon_1}(t, \cdot)+\kappa_F\varepsilon_1+F(t+\varepsilon_1, \cdot, U^{\varepsilon_1}(t, \cdot))}d\mu \\
&\geq e^{\partial_tU^{\varepsilon_1}(t, \cdot)+F(t, \cdot, U^{\varepsilon_1})}d\mu
    \end{align*}
and
\begin{align*}
    (\omega_{t+\varepsilon_1}+dd^cV^{\varepsilon_1}(t, \cdot))^n &= (\omega_{t+\varepsilon_1}+dd^cv(t+\varepsilon_1))^n \\
    &\leq e^{\partial_tv(t+\varepsilon_1, \cdot)+F(t+\varepsilon_1, \cdot, v(t+\varepsilon_1, \cdot))}d\mu \\
    &\leq e^{\partial_tV^{\varepsilon_1}(t, \cdot)-\kappa_F\varepsilon_1+F(t+\varepsilon_1, \cdot, V^{\varepsilon_1}(t, \cdot))}d\mu \\
    &\leq e^{\partial_tV^{\varepsilon_1}(t, \cdot)+F(t, \cdot,V^{\varepsilon_1})}d\mu.
\end{align*}
    Hence by replacing $u$ with $U^{\varepsilon_1}$ and $v$ with $V^{\varepsilon_1}$, we conclude that $U^{\varepsilon_1} \leq V^{\varepsilon_1}$ in $X_{T'}$. By taking $\varepsilon_1 \rightarrow 0$, we get the result.
\end{proof}
\bibliographystyle{abbrv}
\bibliography{ref.bib}
\end{document}